\theoremstyle{plain}
\newtheorem{theorem}{Theorem}[section]
\newtheorem{lemma}[theorem]{Lemma}
\newtheorem{corollary}[theorem]{Corollary}
\newtheorem{proposition}[theorem]{Proposition}
\newtheorem{test}{Test}
\theoremstyle{definition}
\newtheorem{remark}[theorem]{Remark}
\newtheorem{definition}[theorem]{Definition}
\numberwithin{equation}{section}
\newcommand{\comp}{\mathds{C}}
\newcommand{\Ee}{\mathds{E}}
\newcommand{\nat}{\mathds{N}}
\newcommand{\real}{\mathds{R}}
\newcommand{\Pp}{\mathds{P}}
\newcommand{\hN}{\mbox{}^{\scriptscriptstyle N}\kern-1.5pt}
\newcommand{\MN}{\mbox{}^{\scriptscriptstyle N}\kern-2.5pt{M}}
\newcommand{\RN}{\mbox{}^{\scriptscriptstyle N}\kern-1.5pt{\mathcal{R}}}
\newcommand{\MsN}{\mbox{}^{\scriptscriptstyle N}\kern-2pt{\mathcal{M}}}
\newcommand{\MNo}{\mbox{}^{\scriptscriptstyle N}\kern-2.5pt\overline{M}}
\newcommand{\sampleN}{}
\newcommand{\scalp}[2]{#1\cdot#2} 
\newcommand{\sbullet}{{\scriptscriptstyle\bullet}}
\newcommand{\GC}{\mathcal{G}}
\newcommand{\Askript}{\mathcal{A}}
\newcommand{\Cskript}{\mathcal{C}}
\newcommand{\Fskript}{\mathcal{F}}
\newcommand{\Rskript}{\mathcal{R}}
\newcommand{\Mskript}{\mathcal{M}}
\newcommand{\One}{\mathds{1}}
\newcommand{\norm}[1]{\left\Vert#1\right\Vert}
\newcommand\doverline[1]{\overline{\overline{#1}}}
\newcommand{\dd}{\mathrm{d}}
\newcommand{\ddB}{\mathrm{d}B}
\newcommand{\ddr}{\mathrm{d}r}
\newcommand{\dds}{\mathrm{d}s}
\newcommand{\ddt}{\mathrm{d}t}
\DeclareMathOperator{\Cov}{Cov}
\DeclareMathOperator{\Ce}{C}
\DeclareMathOperator{\trace}{trace}
\DeclareMathOperator{\sgn}{sgn}
\newcommand{\VN}{\mbox{}^{\scriptscriptstyle N}\kern-1.5pt{V}}
\newcommand{\FN}{\mbox{}^{\scriptscriptstyle N}\kern-1.5pt{F}}
\newcommand{\fN}{\mbox{}^{\scriptscriptstyle N}\kern-1.5pt{f}}
\newcommand{\ii}{\mathrm{i}}
\newcommand{\ee}{\mathrm{e}}
\renewcommand\labelenumi{\textup{\alph{enumi})}}
\renewcommand\theenumi\labelenumi
\begin{document}

\title{Distance multivariance:\\ New dependence measures for random vectors\footnote{Accepted for publication in Annals of Statistics}}

\author{B.~B\"{o}ttcher, M.~Keller-Ressel, R.\,L.~Schilling\\ \\
TU Dresden \\ Fakult\"at Mathematik\\ Institut f\"{u}r Mathematische Stochastik\\ 01062 Dresden, Germany}

\date{October 2018}
\maketitle

\begin{abstract}
    We introduce two new measures for the dependence of $n \ge 2$ random variables:  \emph{distance multivariance} and \emph{total distance multivariance}. Both measures are based on the weighted $L^2$-distance of quantities related to the characteristic functions of the underlying random variables. These extend distance covariance (introduced by Sz\'ekely, Rizzo and Bakirov) from pairs of random variables to $n$-tuplets of random variables. We show that total distance multivariance can be used to detect the independence of $n$ random variables and has a simple finite-sample representation in terms of distance matrices of the sample points, where distance is measured by a continuous negative definite function. Under some mild moment conditions, this leads to a test for independence of multiple random vectors which is consistent against all alternatives.
\end{abstract}

MSC classification: 62H20; 60E10, 62G10, 62G15, 62G20

Keywords: dependence measure, stochastic independence, negative definite function, characteristic function, Gaussian random field, statistical test of independence 

\tableofcontents
\allowdisplaybreaks

\section{Introduction and related work}
Distance multivariance $M_\rho(X_1, X_2,\dots, X_n)$ and total distance multivariance $\overline M_\rho(X_1, X_2, \dots, X_n)$ are new measures for the dependence of  random variables $X_1, \dots, X_n$. They are closely related to distance covariance, as introduced by Sz\'ekely, Rizzo and Bakirov \cite{SzekRizzBaki2007, SzekRizz2009} and its generalizations presented in \cite{part1}. Distance multivariance inherits many of the features of distance covariance; in particular, see Theorem~\ref{thm:independence} below,
\begin{itemize}
\item
    $M_\rho(X_1, \dots, X_n)$ and $\overline M_\rho(X_1, \dots, X_n)$ are defined for random variables\\ $X_1, \dots$, $X_n$ with values in spaces of arbitrary dimensions $\real^{d_1}, \dots, \real^{d_n}$;
\item
    if each subfamily of $X_1, \dots, X_n$ with $n-1$ elements is independent, $M_\rho(X_1, \dots, X_n) = 0$ characterizes the independence of $X_1, \dots, X_n$;
\item
    $\overline M_\rho(X_1, \dots, X_n) = 0$ characterizes the independence of $X_1, \dots, X_n$.
\end{itemize}
We emphasize that measuring the dependence of $n$ random variables is different from measuring their pairwise dependence, and for this reason bivariate dependence measures, such as distance covariance,  cannot be used directly to detect overall independence. A classical example, Bernstein's coins, is discussed in Section~\ref{ex:bernstein}. The extension of distance covariance to more than two random variables was addressed in a short paragraph in Bakirov and Sz\'ekely \cite{BakiSzek2011}. Our approach is different from the approach suggested in \cite{BakiSzek2011}; it is, in fact, closer to the two approaches that were advised against in \cite{BakiSzek2011}. We will discuss and compare these approaches in greater detail in Section~\ref{sec:BS11}, once the necessary concepts have been introduced. Recently, Yao \emph{et al.}\ \cite{YaoZhanShao2017} introduced measures for pairwise dependence based on distance covariance. In contrast, distance multivariance does not only detect pairwise dependence, but any type of multivariate dependence. Jin and Matteson \cite{JinMatt2017} present measures for multivariate independence which also use distance covariance. The resulting exact estimators are computationally more complex than those of distance multivariance; \cite{Boet2017} shows that the approximate estimators of \cite{JinMatt2017} have less empirical power but are computationally of the same order as distance multivariance.

Another line of research considers dependence measures based on reproducing kernel Hilbert spaces, notably the Hilbert-Schmidt independence criterion (HSIC) of \cite{GretBousSmolScho2005}, which has been shown to be equivalent to distance covariance in \cite{SejdSripGretFuku2013}. Subsequently,  HSIC has been extended from a bivariate dependence measure to a multivariate dependence measure, $\mathrm{dHSIC}$, in \cite{PfisBuehSchoPete2017}. We compare $\mathrm{dHSIC}$ to distance multivariance in Section~\ref{sec:dhsic}.

Similar to distance covariance in \cite{SzekRizzBaki2007} and its generalizations given in \cite{part1}, distance multivariance can be defined as a weighted $L^2$-norm of quantities related to the characteristic functions of $X_1, \dots, X_n$, cf.~Definition~\ref{def:multivariance} below. There are, however, further definitions of distance multivariance which are equivalent up to moment conditions. In particular, multivariance can be equivalently defined as \emph{Gaussian multivariance} by evaluating a Gaussian random field at the instances $(X_1, \dots, X_n)$ and taking certain expectations, see Section~\ref{sub:gaussian}. This generalizes Sz\'ekely-and-Rizzo's \cite[Def.~4]{SzekRizz2009} Brownian covariance which is recovered using $n=2$ and multiparameter Brownian motion as random field.

The sample versions of both distance multivariance and total distance multivariance have simple expressions in terms of the distance matrices of the sample points; this means that we can compute these statistics efficiently even for large samples and in high dimensions. In concrete terms, as we show in Theorem~\ref{thm:sample}, the square of the distance multivariance computed from samples $\bm{x}^{(1)},\dots, \bm{x}^{(N)}$ of the random vector $\bm{X} = (X_1, \dots, X_n)$ can be written as
\begin{equation*}
    \MN^2_\rho(\bm{x}^{(1)},\dots, \bm{x}^{(N)})
    = \frac{1}{N^2} \sum_{j,k=1}^N (A_1)_{jk} \cdot \ldots \cdot (A_n)_{jk}
\end{equation*}
where the $A_i$ are doubly centred distance matrices of the sample points of $X_i$, i.e.~$A_i := - CB_iC$ where $C$ is the centering matrix $C = I - \tfrac{1}{N}\One$, $\One=(1)_{j,k=1,\dots, N}$, $I=(\delta_{jk})_{j,k=1,\dots, N}$, and $B_i$ are the distance matrices of the sample points. The square of the sample \emph{total} distance multivariance has a similar form
\begin{equation*}
    \MNo^2_\rho(\bm{x}^{(1)},\dots, \bm{x}^{(N)})
    = \frac{1}{N^2} \sum_{j,k=1}^N (1+(A_1)_{jk}) \cdot \ldots \cdot (1+ (A_n)_{jk}) - 1.
\end{equation*}
The (quasi-)distance that is used to compute $B_i$ can be chosen, under mild restrictions, from the class of real-valued continuous negative definite functions, cf.~\cite[Ch.~II]{BerFor}, \cite[Sec.~3.2]{Jaco2001}. In particular, we may use Euclidean and $p$-Minkowski distances with exponent $p \in (1,2]$. In the bivariate case, and using Euclidean distance, the sample distance covariance of Sz\'ekely and Rizzo \cite[Def.~3]{SzekRizz2009} is recovered.

Finally, we show in Theorems~\ref{thm:Mdistconv} and \ref{thm:Mdconv} asymptotic properties of sample distance multivariance as $N$ tends to infinity; these results are multivariate analogues of those in \cite[Thm.~5]{SzekRizz2009}. Based on these results, we formulate two new distribution-free tests for the joint independence of $n$ random variables in Section~\ref{sec:test}. These tests are conservative, and a resampling approach can be used to construct tests achieving the nominal size; further results in this direction can be found in \cite{Boet2017}.  The paper concludes in Section~\ref{ex:bernstein} with an extended example based on \emph{Bernstein's coins},  which demonstrates numerically that (total) distance multivariance is able to distinguish between pairwise independence and higher-order dependence of random variables. The example also illustrates the practical validity of the two tests that are proposed. A further example with sinusoidal dependence is discussed, illustrating the influence of the underlying distance on the dependence measure.

For the immediate use of distance multivariance in applications all necessary functions are provided in the R package \texttt{multivariance}, \cite{Boett2017R-1.0.5}.

\section{Preliminaries}\label{sec:prelim}
We consider a $d$-dimensional random vector $\bm{X} = (X_1, \dots, X_n)$, whose components $X_i$ are random variables taking values in $\real^{d_i}$, $i=1,\dots, n$, and where $d = d_1 + \dots + d_n$. The characteristic function of $X_i$ is denoted by
$$
    f_{X_i}(t_i) := \Ee\ee^{\ii \scalp{X_i}{t_i}}, \quad t_i \in \real^{d_i},
$$
and we write $\bm{t} = (t_1, \dots, t_n)$. In order to define the distance multivariance of $(X_1, \dots, X_n)$, we use \emph{L\'evy measures} $\rho_i$, i.e.\ Borel measures $\rho_i$ defined on $\real^{d_i}\setminus\{0\}$ such that
\begin{equation}\label{eq:levy_integrability}
    \int_{\real^{d_i}\setminus\{0\}} \min\{|t_i|^2,1\}\,\rho_i(\ddt_i) <\infty.
\end{equation}
Note that the measures $\rho_i$ need not be finite. Such measures appear in the L\'evy--Khintchine representation of infinitely divisible distributions, see \cite{sato1999levy}.
Throughout this paper we assume that $\rho_i$, $i=1,\dots,n$ are symmetric L\'evy measures with full topological support, cf.~\cite[Def.~2.3]{part1}, and we set $\rho := \rho_{d_1}\otimes\dots\otimes\rho_{d_n}$. To keep notation simple, we write $\int\dots\,\rho_i(\ddt_i)$ and $\int_{\real^{d_i}}\dots\,\rho_i(\ddt_i)$ instead of the formally correct $\int_{\real^{d_i}\setminus\{0\}}\dots\,\rho_i(\ddt_i)$.

\begin{definition}
    Let $(X_i)_{i=1,\dots, n}$ be random variables with values in $\real^{d_i}$ and let the measures $\rho_i$ be given as above. With $\rho := \rho_1 \otimes \dots \otimes \rho_n$, we define

    \smallskip\textup{a)}
    \emph{Distance multivariance} $M_\rho \in [0,\infty]$ by
    \begin{equation}\label{def:multivariance}
        M_\rho^2(X_1,\dots,X_n)
        :=  \int_{\real^d} \left|\Ee\left( \prod_{i=1}^n \left(\ee^{\ii \scalp{X_i}{t_i}}- f_{X_i}(t_i)\right)\right)\right|^2 \rho(\ddt_1, \dots, \ddt_n),
    \end{equation}

    \smallskip\textup{b)}
    \emph{Total distance multivariance} $\overline M_\rho \in [0,\infty]$ by
    \begin{equation}\label{def:total_multivariance}
        \overline M_\rho^2(X_1,\dots,X_n)
        :=  \sum_{\substack{1\leq i_1< \dots < i_m \leq n\\2 \leq m \leq n}} M_{\bigotimes_{j=1}^m\rho_{i_j}}^2(X_{i_1},\dots,X_{i_m}).
    \end{equation}
\end{definition}
\begin{remark}
\ \ a) \
    Using the tensor product for functions
    $$
        \left(g_1 \otimes \dots \otimes g_n\right) (x_1, \dots, x_n) = g_1(x_1) \cdot\ldots\cdot g_n(x_n),
    $$
    distance multivariance can be written in a compact way as
    \begin{equation}\label{eq:M_rho_L2}
        M_\rho(X_1,\dots,X_n)
        = \left\| \Ee\left[ \bigotimes_{i=1}^n \left(\ee^{\ii \scalp{X_i}{\sbullet}}- f_{X_i}(\sbullet)\right)\right]\right\|_{L^2(\rho)}.
\end{equation}
Thus, distance multivariance is the weighted $L^2$-norm of a quantity related to the characteristic functions of the $X_i$, analogous to the definition of distance covariance in Sz\'ekely, Rizzo and Bakirov \cite[Def.~1]{SzekRizzBaki2007}.

\smallskip b) \
    Both $M_\rho$ and $\overline M_\rho$ are always well-defined in $[0,+\infty]$: For each $\bm{t} = (t_1, \dots, t_n)$ the product appearing in the integrand of \eqref{def:multivariance} can be bounded in absolute value by $2^n$; therefore, the expectation exists. The integrand of the $\rho$-integral is positive, and so the integral is always well-defined in $[0,+\infty]$. Just as in the bivariate case, see
    \cite[Thm.~3.7, Rem.~3.8]{part1},
    we need moment conditions on the random variables $X_i$ to guarantee finiteness of $M_\rho$ and $\overline M_\rho$, see Proposition~\ref{prop:representation} below.

\smallskip c) \
    At first sight, total distance multivariance seems to suffer from a computational curse of dimension, since the sum \eqref{def:total_multivariance} extends over all subfamilies (comprising at least two members) of $(X_1, \dots, X_n)$, i.e.\  $2^n - 1 - n$ terms are summed. We will, however, show in Theorem~\ref{thm:sample}, that the finite sample version of $\overline M_\rho$ has the same computational complexity as $M_\rho$ and its computation requires only $\mathcal{O}(nN^2)$ operations given a sample of size $N$.
\end{remark}

Each L\'evy measure $\rho_i$ uniquely defines a real-valued \emph{continuous negative definite function}
\begin{equation}\label{eq:cndf}
    \psi_i(y_i) := \int_{\real^{d_i}} \left(1-\cos(y t_i)\right) \,\rho_i(\ddt_i) \quad \text{for\ \ } y_i\in \real^{d_i},
\end{equation}
see e.g.~\cite[Cor.~3.7.9]{Jaco2001}.
The functions $\psi_i$ will play a key role in the finite-sample representation of distance multivariance and also appear in moment conditions. They are also the reason for the terms \emph{distance} multivariance (and \emph{distance} covariance, cf.~\cite{SzekRizz2009}), since  $\psi_i$  yields well-known distance functions (and in many cases norms) in several important special cases. In particular, $x\mapsto |x|_{}^\alpha$ where $|\cdot|_{}$ is the standard $d_i$-dimensional Euclidean norm and $\alpha \in (0,2)$, can be represented using
$$
    \rho_i(\ddt_i) = c_{\alpha,d_i}\, |t_i|_{}^{-d_i - \alpha}\, \ddt_i,
    \quad \alpha \in  (0,2),
    \quad c_{\alpha,d_i} =  \frac{\alpha 2^{\alpha-1} \Gamma\left(\tfrac{\alpha+d_i}{2}\right)}{\pi^{d_i/2} \Gamma\left(1-\tfrac{\alpha}{2}\right)},
$$
since
$$
    |y_i|_{}^\alpha = c_{d_i, \alpha} \int_{\real^{d_i}} \left(1-\cos \scalp{y}{t_i}\right) \,\frac{\ddt_i}{|t_i|_{}^{d_i + \alpha}}.
$$
Also other Minkowski distances $|x|_{d_i,p} := \left(\sum_{j=1}^{d_i} |x_j|^p\right)^{1/p}$, for $p \in (1,2]$ can be written in the form \eqref{eq:cndf}; see \cite[Lemma~2.2 and Table~1]{part1} for this and further examples.

For the following results and proofs it will be useful to introduce some notation for various distributional copies of the vector $\bm{X} = (X_1, \dots, X_n)$. Recall that $\mathcal{L}(X_i)$ denotes the law of $X_i$ and define the random vectors
\begin{equation} \label{eq:X0X1}
    \begin{aligned}
    \bm{X}_0    &= (X_{0,1},\dots,X_{0,n}) &&\sim \quad\mathcal{L}(X_1) \otimes \dots \otimes \mathcal{L}(X_n),\\
    \bm{X'}_0   &= (X'_{0,1},\dots,X'_{0,n}) &&\sim \quad\mathcal{L}(X_1) \otimes \dots \otimes \mathcal{L}(X_n),\\
    \bm{X}_1    &= (X_{1,1},\dots,X_{1,n}) &&\sim \quad\mathcal{L}(X_1, \dots, X_n),\\
    \bm{X'}_1   &= (X'_{1,1},\dots,X'_{1,n}) &&\sim \quad\mathcal{L}(X_1, \dots, X_n),
    \end{aligned}
\end{equation}
such that the random vectors $\bm{X}_0, \bm{X'}_0, \bm{X}_1, \bm{X'}_1$ are independent. Note that the subscript `$1$' -- as in $\bm{X}_1$ and $\bm{X'}_1$  -- indicates that these vectors have the \emph{same distribution} as $\bm{X}$, while the subscript `$0$' -- as in $\bm{X}_0$ and $\bm{X'}_0$ -- means that these random vectors have the \emph{same marginal distributions} as $\bm{X}$, but their coordinates are independent.
\begin{definition} \label{def:moment}
We introduce the following moment conditions:

    \smallskip\textup{a)}
    The \emph{mixed moment condition} holds if
    $$
        \Ee\left( \prod_{i=1}^n \psi_i(X_{k_i,i}-X'_{l_i,i})\right)< \infty
        \quad \text{for all\ \ } k_i,l_i \in\{0,1\},\; i=1,\dots, n.
    $$

    \smallskip\textup{b)}
    The \emph{psi-moment condition} holds if there exist $p_i \in [1,\infty)$ satisfying $\sum_{i=1}^n p_i^{-1} = 1$ such that
    $$
        \Ee \psi_i^{p_i}(X_i) < \infty
        \quad\text{for all\ \ } i = 1, \dots, n.
    $$
    In particular, one may choose $p_1 = \dots = p_n = n$. (The case $p_i=\infty$ is also admissible, but this means that $\psi_i$ must be bounded or $X_i$ must have compact support.)

    \smallskip\textup{c)}
    The \emph{$2p$-moment condition} holds if there exist $p_i \in [1,\infty)$ satisfying $\sum_{i=1}^n p_i^{-1} = 1$ such that
    $$
        \Ee \big[|X_i|_{}^{2 p_i}\big] < \infty \quad \text{for all\ \ } i = 1, \dots, n;
    $$
    (the case $p_i=\infty$ is also admissible, but this means that $X_i$  is a.s.\ bounded).
\end{definition}
As shown in Lemma~\ref{lem:moments} in the supplement \cite{part2supp}, these moment conditions are ordered from weak to strong, i.e.~c) implies b) and b) implies a). Also note that b) and a) trivially hold (for any choice of $p_i$) if the functions $\psi_i$ are bounded.

\section{Distance multivariance and total distance multivariance}\label{sec:theory}

\subsection{Total distance multivariance characterizes independence}

We need the concept of $m$-independence of $n\geq m$ random variables.

\begin{definition}
    Random variables $X_1,\dots,X_n$ are \emph{$m$-independent} (for some $m\leq n$) if for any sub-family $\{i_1,\dots, i_m\}\subset \{1,\dots, n\}$ the random variables $X_{i_1},\dots, X_{i_m}$ are independent.
\end{definition}

The condition of $(n-1)$-independence allows certain factorizations of expectations of products; the proof of the following Lemma is given in the supplement \cite{part2supp}:
\begin{lemma}\label{lem:factorization}
    Let $Z_1,\dots,Z_n$ be $\comp$-valued random variables which are $(n-1)$-in\-de\-pen\-dent. Then
    \begin{equation}
        \Ee\left(\prod_{i=1}^n (Z_i - \Ee Z_i)\right) = \Ee\left(\prod_{i=1}^n Z_i - \prod_{i=1}^n \Ee Z_i\right).
    \end{equation}
\end{lemma}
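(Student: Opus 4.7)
The plan is to expand the product on the left-hand side by multilinearity, exploit $(n-1)$-independence to evaluate every resulting term except the full product $\prod_{i=1}^n Z_i$, and then collapse the remaining combinatorial sum via the binomial identity.

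\smallskip
First, I would write
\begin{equation*}
    \prod_{i=1}^n (Z_i - \Ee Z_i)
    = \sum_{S \subseteq \{1,\dots,n\}} (-1)^{n-|S|} \Big(\prod_{i \in S} Z_i\Big) \Big(\prod_{i \notin S} \Ee Z_i\Big),
\end{equation*}
and isolate the single term corresponding to $S = \{1,\dots,n\}$, which already has the form $\prod_{i=1}^n Z_i$. For every proper subset $S \subsetneq \{1,\dots,n\}$ one has $|S|\le n-1$, so the hypothesis of $(n-1)$-independence guarantees that $(Z_i)_{i\in S}$ is an independent family. Hence $\Ee\prod_{i\in S}Z_i=\prod_{i\in S}\Ee Z_i$, and each such term contributes $(-1)^{n-|S|}\prod_{i=1}^n \Ee Z_i$ after taking expectations.

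\smallskip
Collecting these contributions, the claim reduces to the identity
\begin{equation*}
    \sum_{k=0}^{n-1} \binom{n}{k}(-1)^{n-k} = -1,
\end{equation*}
which follows from $(1-1)^n = 0$ by removing the $k=n$ term. Multiplying by $\prod_{i=1}^n \Ee Z_i$ and combining with the isolated $\Ee\prod_{i=1}^n Z_i$ term yields exactly $\Ee\prod_{i=1}^n Z_i - \prod_{i=1}^n \Ee Z_i$, which may be rewritten as the expectation on the right-hand side since $\prod_{i=1}^n \Ee Z_i$ is a constant.

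\smallskip
There is no real analytic difficulty here: $(n-1)$-independence is precisely the minimal hypothesis that makes every term in the expansion \emph{except} the top-order one factorize, and the binomial identity then takes care of the signs. The main thing to be careful about is the bookkeeping of the index sets $S$ and the sign $(-1)^{n-|S|}$; the argument does not require any integrability beyond what is implicit in the statement (each $\Ee Z_i$ exists and $\Ee\prod_i Z_i$ exists, the latter because of independence and finite $\Ee Z_i$ for subfamilies of size $\leq n-1$, while for the top-order term it is part of the assumption that the left-hand side makes sense).
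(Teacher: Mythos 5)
Your proof is correct and matches the paper's argument essentially verbatim: both expand $\prod_i(Z_i-\Ee Z_i)$ over subsets $S\subseteq\{1,\dots,n\}$, isolate the $S=\{1,\dots,n\}$ term, invoke $(n-1)$-independence to factorize $\Ee\prod_{i\in S}Z_i$ for every proper $S$, and then collapse the remaining sum to $-\prod_i\Ee Z_i$ via the vanishing of $\sum_{k=0}^n\binom{n}{k}(-1)^{n-k}$. The only cosmetic difference is that the paper writes the sign as $(-1)^{|S^c|}$ and does not spell out the binomial identity explicitly.
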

If we use the random variables $Z_i := \ee^{\ii \scalp{X_i}{t_i}}$, Lemma \ref{lem:factorization} yields the following result for characteristic functions.
\begin{corollary}\label{cor:mInd-charfun}
    Let $X_1,\dots,X_m$ be $(m-1)$-independent random variables, then
    \begin{equation}\begin{aligned}
        &\Ee\left[\prod_{k=1}^m \left(\ee^{\ii \scalp{X_{i_k}}{t_{i_k}}} - f_{X_{i_k}}(t_{i_k})\right)\right]\\
        &\qquad= f_{(X_{i_1},\dots,X_{i_m})}(t_{i_1},\dots,t_{i_m}) - f_{X_{i_1}}(t_{i_1})\cdot \ldots \cdot f_{X_{i_{m}}}(t_{i_m}).
    \end{aligned}\end{equation}
\end{corollary}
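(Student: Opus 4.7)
The plan is to deduce this corollary directly from Lemma~\ref{lem:factorization} by choosing the complex-valued random variables $Z_k := \ee^{\ii \scalp{X_{i_k}}{t_{i_k}}}$ for $k = 1, \dots, m$. The first step is to verify the hypothesis of the lemma: since each $Z_k$ is a Borel-measurable (even continuous and bounded) function of $X_{i_k}$ alone, any sub-collection of $m-1$ of the $Z_k$ is independent whenever the corresponding sub-collection of $m-1$ of the $X_{i_k}$ is independent. Hence the $(m-1)$-independence of $X_{i_1},\dots,X_{i_m}$ transfers to $Z_1,\dots,Z_m$, so Lemma~\ref{lem:factorization} applies.

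Next I identify the two sides. By definition of the characteristic function, $\Ee Z_k = f_{X_{i_k}}(t_{i_k})$, which matches the subtracted term inside the product on the left-hand side. Moreover,
\[
    \prod_{k=1}^m Z_k = \exp\!\left(\ii \sum_{k=1}^m \scalp{X_{i_k}}{t_{i_k}}\right),
\]
so $\Ee \prod_{k=1}^m Z_k = f_{(X_{i_1},\dots,X_{i_m})}(t_{i_1},\dots,t_{i_m})$, and $\prod_{k=1}^m \Ee Z_k = \prod_{k=1}^m f_{X_{i_k}}(t_{i_k})$. Substituting these three identifications into the conclusion of Lemma~\ref{lem:factorization} yields exactly the claimed formula.

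There is essentially no real obstacle: the corollary is a direct specialization of the lemma, and the only mildly technical point is the observation that $(m-1)$-independence is preserved under componentwise measurable transformations, which is standard. The value of stating the result separately is purely expository, as this is the form of the factorization that will be used repeatedly in the sequel (in particular in identifying $M_\rho$ with quantities involving joint characteristic functions under $(n-1)$-independence).
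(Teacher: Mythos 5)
Your proposal is correct and matches the paper's intended argument exactly: the paper introduces the corollary with the sentence that one should apply Lemma~\ref{lem:factorization} to $Z_k := \ee^{\ii \scalp{X_{i_k}}{t_{i_k}}}$, which is precisely what you do, and your identifications of $\Ee Z_k$, $\Ee\prod_k Z_k$, and $\prod_k \Ee Z_k$ with the marginal and joint characteristic functions are the ones intended. The only thing you add beyond the paper's one-line remark is the explicit (and correct) observation that $(m-1)$-independence is preserved under coordinatewise measurable maps, which is a worthwhile sanity check but not a deviation in approach.
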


This enables us to show that independence is indeed characterized by total distance multivariance.
\begin{theorem}\label{thm:independence}
\textup{a)}
    Distance multivariance vanishes for independent random variables, i.e.
    \begin{equation} \label{eq:indep-M}
        X_1,\dots,X_n \text{\ \ are independent} \implies M_\rho(X_1,\dots,X_n) = 0.
    \end{equation}
    If $X_1, \dots, X_n$ are $(n-1)$-independent, then also the converse 
     holds.

    \smallskip\textup{b)}
    Total distance multivariance characterizes independence, i.e.
    \begin{equation}
        X_1,\dots,X_n \text{\ \ are independent} \iff \overline{M}_\rho(X_1,\dots,X_n) = 0.
    \end{equation}
\end{theorem}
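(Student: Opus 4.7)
The plan is to prove part (a) first and then bootstrap to part (b) by induction on the size of sub-families.

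For part (a), the forward implication is immediate: when $X_1,\dots,X_n$ are independent, the factors $\ee^{\ii\scalp{X_i}{t_i}}-f_{X_i}(t_i)$ are independent with mean zero, so the expectation inside \eqref{def:multivariance} vanishes for every $\bm{t}$, making the integrand identically zero. For the converse under $(n-1)$-independence, I would invoke Corollary \ref{cor:mInd-charfun}, which rewrites the integrand as $|f_{\bm X}(\bm t)-\prod_{i=1}^n f_{X_i}(t_i)|^2$. Since $M_\rho=0$ and this integrand is nonnegative, it vanishes $\rho$-a.e.; and because each $\rho_i$ has full topological support on $\real^{d_i}\setminus\{0\}$, the product measure $\rho$ has full topological support on the open set $U:=\prod_i(\real^{d_i}\setminus\{0\})$. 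Continuity of the integrand in $\bm t$ (by dominated convergence, the integrand is bounded by $4^n$) then upgrades the a.e.\ statement to vanishing on all of $U$. Since $U$ is dense in $\real^d$ and both characteristic functions are continuous everywhere, the factorization $f_{\bm X}(\bm t)=\prod_i f_{X_i}(t_i)$ extends across the coordinate hyperplanes $\{t_i=0\}$, which proves independence.

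For part (b), the forward implication again follows from part (a) applied to every sub-family, since independence is hereditary to sub-families, so every summand in \eqref{def:total_multivariance} is zero. The converse is the more substantial direction; I would proceed by induction on $m\in\{2,\dots,n\}$, showing that $X_1,\dots,X_n$ are $m$-independent. For the base case $m=2$, each pairwise summand $M^2_{\rho_i\otimes\rho_j}(X_i,X_j)$ is nonnegative, so $\overline M_\rho=0$ forces each of these to vanish; applying part (a) to a pair (where $(2-1)$-independence of a single variable is vacuous) gives pairwise independence. For the inductive step from $m-1$ to $m$, assume $X_1,\dots,X_n$ are $(m-1)$-independent and fix any sub-family $\{i_1,\dots,i_m\}$. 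This sub-family inherits $(m-1)$-independence, and the corresponding summand $M^2_{\bigotimes_j\rho_{i_j}}(X_{i_1},\dots,X_{i_m})$ equals zero; part (a) then upgrades the sub-family to full $m$-independence. Taking $m=n$ completes the induction.

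The main obstacle is the converse half of part (a): one has to pass from ``integral vanishes'' to ``joint characteristic function factorizes on all of $\real^d$''. This requires combining the full topological support hypothesis on each $\rho_i$ (to move from $\rho$-a.e.\ to pointwise on $U$) with a separate continuity/density argument to recover the factorization on the coordinate hyperplanes that $\rho$ deliberately excludes. Once this step is in place, the rest of the proof is essentially bookkeeping via the factorization lemma and the induction on sub-family size.
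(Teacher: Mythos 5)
Your proposal is correct and follows essentially the same route as the paper: forward directions by independence-factorization, converse of (a) via Corollary~\ref{cor:mInd-charfun} plus full topological support of $\rho$ and continuity of the integrand, and converse of (b) by induction on sub-family size bootstrapping through part (a). The only difference is that you make explicit the density argument needed to pass from the open set $\prod_i(\real^{d_i}\setminus\{0\})$ (where $\rho$ actually lives) to all of $\real^d$, a point the paper's proof glosses over in stating ``$\kappa\equiv 0$ everywhere on $\real^d$'' directly.
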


\begin{remark} \label{rem:M-n-independence} Note that multivariance is not just a building block of total multivariance, but has applications in its own right. The characterization of $n$-independence by $(n-1)$-independence and $M_\rho(X_1,\ldots,X_n) =0$ can be used to detect (higher order) dependence structures; this is used in \cite{Boet2017}. Other applications can be found in the setting of independent component analysis (ICA). The algorithm of \cite{Como1994}) aims to transform the input signal into pairwise independent random variables which, if all assumptions of ICA are satisfied, are also mutually independent. Thus, distance multivariance can be used to test the validity of assumptions by testing for higher order dependence, given pairwise independence \cite{Bart2018}.
\end{remark}

\begin{proof}[Proof of Theorem \ref{thm:independence}]
Suppose that $X_1,\dots,X_n$ are independent. We have for all indices $\{i_1,\dots,i_m\}\subset\{1,\dots,n\}$
\begin{equation}
    \Ee\left[ \prod_{k=1}^m \left(\ee^{\ii \scalp{X_{i_k}}{t_{i_k}}}- f_{X_{i_k}}(t_{i_k})\right)\right]
    = \prod_{k=1}^m \Ee\left( \ee^{\ii \scalp{X_{i_k}}{t_{i_k}}}- f_{X_{i_k}}(t_{i_k})\right)
    = 0,
\end{equation}
and, so, $M_{\bigotimes_{k=1}^m \rho_{i_k}}(X_{i_1},\dots,X_{i_m}) = 0$; this implies $\overline{M}_\rho(X_1,\dots,X_n)=0$.

For the converse statements suppose first that $X_1, \dots, X_n$ are $(n-1)$-independent and consider
$$
    \kappa(t_1, \dots, t_n)
    := \Ee\left[ \prod_{i=1}^n \left(\ee^{\ii \scalp{X_{i}}{t_{i}}}- f_{X_{i}}(t_{i})\right)\right].
$$
By definition, $M_\rho(X_1, \dots, X_n)$ is the $L^2(\rho)$-norm of $\kappa$. Since $\rho$ has full topological support and $\kappa$ is continuous, $M_\rho = 0$ implies that $\kappa \equiv 0$ everywhere on $\real^d$. By Corollary~\ref{cor:mInd-charfun}, it follows that
$$
    f_{(X_{1},\dots,X_{n})}(t_1,\dots,t_n)  = f_{X_{1}}(t_1)\cdot \ldots \cdot f_{X_{n}}(t_n)
    \quad\text{for all\ \ } t_1,\dots, t_n,
$$
i.e.\ the joint characteristic function of $X_1, \dots, X_n$ factorizes, and we conclude that $X_1, \dots, X_n$ are independent.

Finally, suppose that $\overline{M}_\rho(X_1,\dots,X_n)=0$, and thus that
\begin{equation} \label{eq:proofindependence}
    M_{\bigotimes_{k=1}^m \rho_{i_k}}(X_{i_1},\dots,X_{i_m}) = 0
    \quad\text{for any\ \ } \{i_1,\dots,i_m\} \subset \{1,\dots,n\}.
\end{equation}
Starting with subsets of size $2$, we note that
\begin{align}
    \overline{M}_{\rho_{i_1}\otimes\rho_{i_2}}(X_{i_1},X_{i_2})
    &= M_{\rho_{i_1}\otimes\rho_{i_2}}(X_{i_1},X_{i_2})\\
    \notag &= \|f_{(X_{i_1},X_{i_2})} - f_{X_{i_1}}f_{X_{i_2}}\|_{L^2(\rho_{i_1}\otimes\rho_{i_2})} = 0
\end{align}
for all $\{i_1, i_2\} \subset \{1, \dots, n\}$; this means that the random variables $X_1,\dots, X_n$ are pairwise independent, hence $X_1,\dots,X_n$ are $2$-independent. Continuing with subsets of size $3$, \eqref{eq:proofindependence} together with the first part of the proof implies $3$-independence of $X_1, \dots, X_n$. Repeating this argument finally yields the independence of $X_1,\dots,X_n$.
\end{proof}

\subsection{Further properties and representations of multivariance}
Directly from Definition~\ref{def:multivariance} we see that for two random variables $X=X_1$ and $Y=X_2$ and L\'evy measures $\rho = \rho_1\otimes\rho_2$ the notions of multivariance $M_\rho$, total multivariance $\overline M_\rho$ and generalized distance covariance $V$ as defined in \cite[Def.~3.1]{part1}
coincide, i.e.
$$
    M_\rho(X,Y) = \overline M_\rho(X,Y) = V(X,Y).
$$
The following properties are straightforward.
\begin{proposition}
    Distance multivariance enjoys the following properties.
    \begin{gather}
    \label{eq:Msingle}
        M_{\rho_i}(X_i) = 0 \quad\text{for all\ \ } i=1,\dots,n,\\
    \label{eq:Msymmetric}
        M_\rho(X_1,\dots,X_n) = M_\rho(c_1 X_1,\dots,c_n X_n)\quad \text{for\ \ } c_i \in \{-1,+1\}.
    \intertext{Let $S\subset \{1,\dots,n\}$. If $(X_i, i\in S)$ is independent of $(X_i, i\in S^c)$, then}
    \label{eq:Mfactors}
        M_\rho(X_1,\dots,X_n) = M_{\bigotimes_{i\in S}\rho_i}(X_i, i\in S) \cdot M_{\bigotimes_{i\in S^c}\rho_i}(X_i, i\in S^c).
    \end{gather}
\end{proposition}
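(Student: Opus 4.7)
Each of the three claims should be a short consequence of the definition of $M_\rho$ and properties of the L\'evy measures. The plan is to handle them in the order they are stated.

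For \eqref{eq:Msingle}, I would simply observe that for a single random variable $X_i$ the integrand in \eqref{def:multivariance} is
$$
\left|\Ee\left(\ee^{\ii\scalp{X_i}{t_i}}- f_{X_i}(t_i)\right)\right|^2 = \left|f_{X_i}(t_i) - f_{X_i}(t_i)\right|^2 = 0,
$$
so the $\rho_i$-integral vanishes identically.

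For \eqref{eq:Msymmetric}, note that $f_{c_iX_i}(t_i) = f_{X_i}(c_i t_i)$ and $\ee^{\ii\scalp{c_iX_i}{t_i}} = \ee^{\ii\scalp{X_i}{c_it_i}}$, so
$$
\Ee\left[\prod_{i=1}^n\left(\ee^{\ii\scalp{c_iX_i}{t_i}}- f_{c_iX_i}(t_i)\right)\right] = \Ee\left[\prod_{i=1}^n\left(\ee^{\ii\scalp{X_i}{c_it_i}}- f_{X_i}(c_it_i)\right)\right].
$$
Substituting $s_i := c_i t_i$ in the integral and using the symmetry of each $\rho_i$ (which implies the invariance of $\rho$ under the map $\bm t\mapsto(c_1t_1,\dots,c_nt_n)$), the value of the defining integral is unchanged.

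For \eqref{eq:Mfactors}, the key step is that the independence of $(X_i, i\in S)$ and $(X_i, i\in S^c)$ allows the expectation to factor:
$$
\Ee\left[\prod_{i=1}^n Y_i(t_i)\right] = \Ee\left[\prod_{i\in S}Y_i(t_i)\right]\cdot\Ee\left[\prod_{i\in S^c}Y_i(t_i)\right],
$$
where $Y_i(t_i):= \ee^{\ii\scalp{X_i}{t_i}}-f_{X_i}(t_i)$. Taking the squared modulus (which splits across the product), integrating in $\bm t = (t_1,\dots,t_n)$ against the product measure $\rho = \bigotimes_i \rho_i$, and applying Tonelli's theorem (the integrand is nonnegative, so no integrability issue needs to be verified a priori) yields the claimed product formula.

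The only point that requires a moment's care is the symmetry argument in \eqref{eq:Msymmetric}; otherwise all three parts are direct manipulations of the definition, with independence and Tonelli being the only nontrivial tools in \eqref{eq:Mfactors}.
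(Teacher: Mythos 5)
Your proof is correct and follows essentially the same route as the paper's: the vanishing of the single-variable expectation for \eqref{eq:Msingle}, the symmetry/change-of-variables argument for \eqref{eq:Msymmetric} (which the paper states tersely as "follows from the symmetry of the measures $\rho_i$"), and the factorization of both the expectation and the product measure $\rho$ for \eqref{eq:Mfactors}. Your additions — the explicit substitution $s_i = c_i t_i$ and the appeal to Tonelli for the nonnegative integrand — are simply more detailed versions of the steps the paper leaves implicit.
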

\begin{proof}
If $n=1$, the expectation in \eqref{def:multivariance} becomes $\Ee\left(e^{\ii X_i t_i} - \Ee\ee^{\ii X_i t_i}\right)  = 0$ and \eqref{eq:Msingle} follows. Property~\eqref{eq:Msymmetric} follows from the symmetry of the measures $\rho_i$. For the last property, note that the assumption of independence allows us to factorize the following expression
\begin{align*}
    &\Ee\left[\bigotimes_{i=1}^n \left(\ee^{\ii \scalp{X_i}\sbullet}- f_{X_i}(\sbullet)\right)\right] \\
    &\qquad= \Ee\left[\bigotimes_{i \in S} \left(\ee^{\ii \scalp{X_i}\sbullet}- f_{X_i}(\sbullet)\right)\right]
    \cdot \Ee\left[ \bigotimes_{i \in S^c} \left(\ee^{\ii \scalp{X_i}\sbullet}- f_{X_i}(\sbullet)\right)\right].
\end{align*}
Since also $\rho$ can be factorized into $\bigotimes_{i \in S} \rho_i$ and $\bigotimes_{i \in S^c} \rho_i$, \eqref{eq:Mfactors} follows.
\end{proof}

Another relevant aspect is the behaviour of (total) distance multivariance, when an independent component is added to a given random vector.
\begin{proposition}Let $X_{n+1}$ be independent from $(X_1, \dots, X_n)$. Then
\begin{align}
    M_\rho(X_1,\dots,X_{n+1}) &= 0 \label{eq:M_add}\\
    \overline{M}_\rho(X_1,\dots,X_{n+1}) &= \overline{M}_\rho(X_1,\dots,X_n) \label{eq:M_total_add}
\end{align}
\end{proposition}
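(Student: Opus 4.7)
The approach I would take is a direct application of the two properties \eqref{eq:Msingle} and \eqref{eq:Mfactors} just established, together with a clean splitting of the sum in \eqref{def:total_multivariance}.

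For \eqref{eq:M_add}: The hypothesis that $X_{n+1}$ is independent of $(X_1,\dots,X_n)$ is exactly what is required to invoke \eqref{eq:Mfactors} with $S = \{1,\dots,n\}$ and $S^c = \{n+1\}$. This gives
\[
    M_\rho(X_1,\dots,X_{n+1}) = M_{\bigotimes_{i=1}^n\rho_i}(X_1,\dots,X_n) \cdot M_{\rho_{n+1}}(X_{n+1}),
\]
and the second factor is $0$ by \eqref{eq:Msingle}. So this step is immediate.

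For \eqref{eq:M_total_add}: I would split the index sets $\{i_1,\dots,i_m\} \subset \{1,\dots,n+1\}$ appearing in the defining sum of $\overline M_\rho^2(X_1,\dots,X_{n+1})$ according to whether they contain the index $n+1$. The subsets not containing $n+1$ are exactly the subsets of $\{1,\dots,n\}$ of size at least $2$, and their contribution is precisely $\overline M_\rho^2(X_1,\dots,X_n)$. It remains to show that every term involving the index $n+1$ vanishes. For any such subfamily $(X_{i_1},\dots,X_{i_{m-1}},X_{n+1})$ with $\{i_1,\dots,i_{m-1}\} \subset \{1,\dots,n\}$, the component $X_{n+1}$ is independent of $(X_{i_1},\dots,X_{i_{m-1}})$, because independence of $X_{n+1}$ from the full vector $(X_1,\dots,X_n)$ implies independence from every subvector. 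Hence the argument used for \eqref{eq:M_add} applies verbatim to this subfamily and shows $M_{\bigotimes\rho_{i_j}\otimes \rho_{n+1}}(X_{i_1},\dots,X_{i_{m-1}},X_{n+1}) = 0$.

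There is essentially no obstacle here: both parts reduce to the observation that an independent ``extra'' factor in the integrand of \eqref{def:multivariance} contributes a centered term whose expectation vanishes. The only mildly subtle point is the elementary fact that independence from the joint vector $(X_1,\dots,X_n)$ entails independence from any subvector; this is what allows the factorization to be used on every subfamily containing $n+1$.
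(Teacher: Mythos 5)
Your proof is correct and follows the paper's argument essentially verbatim: apply \eqref{eq:Mfactors} with $S=\{1,\dots,n\}$ together with \eqref{eq:Msingle} to get \eqref{eq:M_add}, then split the sum in \eqref{def:total_multivariance} by whether the index $n+1$ appears and note that all terms containing $n+1$ vanish by the same reasoning. The only difference is that you spell out the use of \eqref{eq:Msingle} and the fact that independence from the joint vector implies independence from any subvector, both of which the paper leaves implicit.
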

\begin{proof}
    The first equation follows from \eqref{eq:Mfactors} by taking $S = \{1, \dots, n\}$. If we insert this into \eqref{def:total_multivariance}, we see that all summands containing the index $i = n+1$ do not contribute to total distance multivariance. Hence, \eqref{eq:M_total_add} follows.
\end{proof}

\begin{remark}
    In this context, it is interesting to anticipate \emph{normalized} total distance multivariance $\overline{\Mskript}_\rho$ which will be defined in \eqref{eq:norm_total_DM}. If $X_{n+1}$ is independent from $(X_1, \dots, X_n)$ it is easy to check that
    \begin{gather*}
        \overline{\Mskript}_\rho(X_1, \dots, X_{n+1}) = r(n) \cdot \overline{\Mskript}_\rho(X_1, \dots, X_n)
    \end{gather*}
    where $r(n) = \sqrt{(2^n - n - 1)}/\sqrt{(2^{n+1} - n - 2)}$. Note that $r(n)$ is strictly increasing from $r(2) = 1/2$ to $\lim_{n \to \infty} r(n) = 1/\sqrt{2}$. Thus, the addition of an independent component affects $\overline{\Mskript}_\rho$ by a factor from $[1/2, 1/\sqrt{2})$.
\end{remark}

We now turn to different representations of multivariance. The representation as $L^2(\rho)$-norm in \eqref{def:multivariance} is always well-defined, but may have infinite value. Under suitable moment conditions, multivariance is finite and can be represented in terms of the continuous negative definite functions $\psi_i$ given in \eqref{eq:cndf}. The proof of the following proposition can be found in the supplement \cite{part2supp}.
\begin{proposition}\label{prop:representation}
    Multivariance  $M_\rho = M_\rho^2(X_1,\dots,X_n)$  can be written as
    \begin{gather}\label{eq:Msum_1}
        M_\rho^2 
        = \int \Ee\left(\sum_{k,l \in \{0,1\}^{ n}} \sgn(k,l) \prod_{i=1}^n \ee^{\ii \scalp{(X_{k_i,i}-X'_{l_i,i})}{t_i}}\right) \rho(\ddt),
    \intertext{or}\label{eq:Msum_2}
        M_\rho^2 
        = \int \Ee\left(\sum_{k,l \in \{0,1\}^{ n}} \sgn(k,l) \prod_{i=1}^n \left[\cos(\scalp{(X_{k_i,i}-X'_{l_i,i})}{t_i}) - 1\right]\right) \rho(\ddt),
    \end{gather}
    where
\begin{align*}
        \sgn(k,l)
        := (-1)^{\sum\limits_{j=1}^n (k_j+l_j)}
        =
        \begin{cases}
            +1, & \text{if $(k,l)$ contains an even no.\ of `$1$\!'\/s},\\
            -1, & \text{if $(k,l)$ contains an odd no.\ of `$1$\!'\/s}.
        \end{cases}
\end{align*}

    If one of the moment conditions in Definition~\ref{def:moment} holds, then the distance multivariance $M_\rho(X_1, \dots, X_n)$ is finite, and the following representation holds
    \begin{equation}\label{eq:Mprod}\begin{aligned}
        M_\rho^2
        &=\Ee\Bigg(\prod_{i=1}^n \big[-\psi_i(X_{i}-X'_{i})+\Ee(\psi_i(X_{i}-X'_{i}) \mid X_i)\\
        &\qquad\qquad\quad\mbox{} + \Ee(\psi_i(X_{i}-X'_{i}) \mid X'_i) -\Ee\psi_i(X_{i}-X'_{i})\big]\Bigg).
    \end{aligned}\end{equation}
\end{proposition}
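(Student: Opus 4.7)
The plan is to derive \eqref{eq:Msum_1}, \eqref{eq:Msum_2}, and \eqref{eq:Mprod} in sequence, using direct expansion of $|\kappa(\bm{t})|^2$ for the first, the coordinate-wise symmetry of $\rho$ for the second, and Fubini under the moment conditions for the third.

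For \eqref{eq:Msum_1} I would start from $M_\rho^2 = \int\kappa(\bm{t})\,\overline{\kappa(\bm{t})}\,\rho(\ddt)$ with $\kappa(\bm{t}) = \Ee\bigl[\prod_{i=1}^n(\ee^{\ii\scalp{X_{1,i}}{t_i}}-f_{X_i}(t_i))\bigr]$, and rewrite each deterministic constant $f_{X_i}(t_i)=\Ee\,\ee^{\ii\scalp{X_{0,i}}{t_i}}$ using the independent-coordinates copy $\bm{X}_0$, and analogously $\overline{f_{X_i}(t_i)}$ via $\bm{X'}_0$. Since the product of $2n$ bounded factors is majorised by $4^n$, Fubini merges the two expectations into a single one over the independent quadruple $(\bm{X}_0,\bm{X'}_0,\bm{X}_1,\bm{X'}_1)$. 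Expanding $\prod_{i=1}^n\bigl(\ee^{\ii\scalp{X_{1,i}}{t_i}}-\ee^{\ii\scalp{X_{0,i}}{t_i}}\bigr)\bigl(\ee^{-\ii\scalp{X'_{1,i}}{t_i}}-\ee^{-\ii\scalp{X'_{0,i}}{t_i}}\bigr)$ and tracking signs---each factor contributes $(-1)^{1-k_i}$ or $(-1)^{1-l_i}$ depending on which subscript is selected---gives the signed sum with coefficient $\prod_i(-1)^{(1-k_i)+(1-l_i)}=\sgn(k,l)$ and exponent $\ii\scalp{X_{k_i,i}-X'_{l_i,i}}{t_i}$ on the $i$-th coordinate. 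This is \eqref{eq:Msum_1}.

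For \eqref{eq:Msum_2} I use that $\rho=\bigotimes_{i=1}^n\rho_i$ is separately symmetric in each coordinate: averaging the integrand of \eqref{eq:Msum_1} over the $2^n$ sign flips $t_i\mapsto\pm t_i$ leaves the integral unchanged and converts each $\ee^{\ii\scalp{X_{k_i,i}-X'_{l_i,i}}{t_i}}$ into $\cos(\scalp{X_{k_i,i}-X'_{l_i,i}}{t_i})$. It then suffices to observe the pointwise identity
\begin{equation*}
\sum_{k,l\in\{0,1\}^n}\sgn(k,l)\prod_{i=1}^n A_i(k_i,l_i) \;=\; \sum_{k,l\in\{0,1\}^n}\sgn(k,l)\prod_{i=1}^n\bigl(A_i(k_i,l_i)-1\bigr),
\end{equation*}
which holds because the signed sum factors coordinate-wise as $\prod_i\sum_{k_i,l_i}(-1)^{k_i+l_i}(\cdot)$, so any term arising from expanding $\prod_i(A_i-1)$ that contains a factor $-1$ at some coordinate is annihilated by $\sum_{k_i,l_i}(-1)^{k_i+l_i}=0$.

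For \eqref{eq:Mprod}, under any of the moment conditions in Definition~\ref{def:moment} (the mixed moment condition being the weakest and already implied by the other two, cf.\ \cite{part2supp}), the full integrand is absolutely integrable against $\Pp\otimes\rho$ (which in particular yields the claimed finiteness of $M_\rho$), so Fubini allows me to evaluate the $\rho$-integral coordinate-wise using $\int[\cos(\scalp{u}{t_i})-1]\,\rho_i(\ddt_i)=-\psi_i(u)$. The $\rho$-integral of $\prod_i[\cos(\scalp{X_{k_i,i}-X'_{l_i,i}}{t_i})-1]$ therefore collapses to $(-1)^n\prod_i\psi_i(X_{k_i,i}-X'_{l_i,i})$, and the signed sum factors as $\prod_{i=1}^n\bigl[\psi_i(X_{1,i}-X'_{1,i})-\psi_i(X_{0,i}-X'_{1,i})-\psi_i(X_{1,i}-X'_{0,i})+\psi_i(X_{0,i}-X'_{0,i})\bigr]$. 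Conditioning on $(\bm{X}_1,\bm{X'}_1)$---leaving the $(X_{0,i},X'_{0,i})$ mutually independent across $i$ and independent of $(\bm{X}_1,\bm{X'}_1)$---turns the conditional expectation of the product into a product of four conditional expectations per coordinate, which evaluate respectively to $\psi_i(X_i-X'_i)$, $\Ee(\psi_i(X_i-X'_i)\mid X'_i)$, $\Ee(\psi_i(X_i-X'_i)\mid X_i)$ and $\Ee\psi_i(X_i-X'_i)$; absorbing the overall $(-1)^n$ flips every sign to yield \eqref{eq:Mprod}. The main obstacle is the Fubini justification here under the weakest (mixed moment) condition, i.e.\ verifying absolute integrability of the full integrand against $\Pp\otimes\rho$; the remaining steps are routine sign bookkeeping and coordinate-wise symmetrization.
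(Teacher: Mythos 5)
Your proof is correct and follows essentially the same route as the paper's: rewrite via the four independent copies $\bm{X}_0, \bm{X}'_0, \bm{X}_1, \bm{X}'_1$ to obtain \eqref{eq:Msum_1}, pass to cosines by symmetry (you symmetrize the $\rho$-integral over $t_i\mapsto\pm t_i$, the paper equivalently invokes $M_\rho^2(X_1,\dots)=M_\rho^2(-X_1,\dots)$), and then use the mixed moment condition and Fubini to collapse the $\rho$-integral coordinate-wise to $-\psi_i$ and condition on $(\bm{X}_1,\bm{X}'_1)$. Your justification of the $\cos\mapsto\cos-1$ step via the factorization $\sum_{k,l}\sgn(k,l)\prod_i c_i(k_i,l_i)=\prod_i\sum_{k_i,l_i}(-1)^{k_i+l_i}c_i(k_i,l_i)$ together with $\sum_{k_i,l_i\in\{0,1\}}(-1)^{k_i+l_i}=0$ is a slightly tidier bookkeeping than the paper's term-by-term cancellation, but it encodes the same combinatorial fact.
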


\begin{remark}
\ \ a) \
The representations \eqref{eq:Msum_1} and  \eqref{eq:Msum_2} have an interesting structural resemblance to the Leibniz' formula for determinants; 
\eqref{eq:Mprod} is the analogue of \cite[Cor.~3.5]{part1}
for the bivariate case.

\smallskip b) \
In the bivariate case $n=2$, distance multivariance is also finite under the weaker moment condition $\Ee\psi_1(X_1) + \Ee\psi_2(X_2) < \infty$, cf.~\cite[Thm.~3.7]{part1}.
\end{remark}

We introduce yet another representation of distance multivariance, which helps to clarify the relation to the finite-sample form and the representation as \emph{Gaussian multivariance}, given in Section~\ref{sub:gaussian} below. For this, we need the centering operator $\Ce_\Fskript$:
\begin{proposition}\label{prop:center}
    Let $X$ be an integrable random variable on $(\Omega,\Askript,\Pp)$ and $\Fskript, \Fskript'$ be sub-$\sigma$-algebras of $\Askript$. Set
    \begin{equation}
        \Ce_\Fskript X:= X- \Ee(X\mid\Fskript).
    \end{equation}
    Then $\Ce$ is a linear operator and
    \begin{align}
        \Ce_{\{\emptyset,\Omega\}} X
        &= X- \Ee X,\\
        \label{perp-double}\Ce_\Fskript \Ce_{\Fskript'} X
        &= X - \Ee(X \mid \Fskript') - \Ee(X \mid \Fskript) + \Ee(\Ee(X \mid \Fskript') \mid \Fskript),\\
        \label{perp-meas}
        \Ce_\Fskript \Ce_{\Fskript'} X &= 0 \quad \text{if $X$ is $\Fskript'$-measurable.}
    \end{align}
    If $\Fskript'$ and $\Fskript$ are independent, then $\Ee(\Ce_{\Fskript'}X \mid\Fskript) = \Ce_{\{\emptyset,\Omega\}}\Ee(X \mid \Fskript)$.
\end{proposition}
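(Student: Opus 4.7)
The plan is to verify each of the four claims by direct computation from the definition $\Ce_\Fskript X := X - \Ee(X\mid\Fskript)$, relying only on standard properties of conditional expectation (linearity, tower property, and the fact that $\Ee(X\mid\Fskript) = \Ee X$ when $\Fskript$ is independent of $\sigma(X)$). Linearity of $\Ce_\Fskript$ is immediate from linearity of conditional expectation.

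For the first identity, I would invoke that the conditional expectation with respect to the trivial $\sigma$-algebra $\{\emptyset,\Omega\}$ is (a.s.\ equal to) the constant $\Ee X$, so $\Ce_{\{\emptyset,\Omega\}} X = X - \Ee X$. For the second identity \eqref{perp-double}, I would apply the definition twice:
\begin{equation*}
\Ce_\Fskript \Ce_{\Fskript'} X
= \Ce_{\Fskript'} X - \Ee\bigl(\Ce_{\Fskript'} X \bigm| \Fskript\bigr)
= \bigl(X - \Ee(X\mid\Fskript')\bigr) - \Ee\bigl(X - \Ee(X\mid\Fskript') \bigm| \Fskript\bigr),
\end{equation*}
and then use linearity of the inner conditional expectation to split the second term into $\Ee(X\mid\Fskript)$ and $\Ee(\Ee(X\mid\Fskript')\mid\Fskript)$, which rearranges to the claimed expression.

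For \eqref{perp-meas}, if $X$ is $\Fskript'$-measurable then $\Ee(X\mid\Fskript') = X$ a.s., so $\Ce_{\Fskript'} X = 0$ and hence $\Ce_\Fskript \Ce_{\Fskript'} X = 0$. Finally, for the independence statement, I would note that $\Ee(X\mid\Fskript')$ is $\Fskript'$-measurable and therefore independent of $\Fskript$ whenever $\Fskript$ and $\Fskript'$ are independent; consequently $\Ee(\Ee(X\mid\Fskript')\mid\Fskript) = \Ee(\Ee(X\mid\Fskript')) = \Ee X$ a.s., and substituting into $\Ee(\Ce_{\Fskript'} X\mid\Fskript) = \Ee(X\mid\Fskript) - \Ee(\Ee(X\mid\Fskript')\mid\Fskript)$ yields $\Ee(X\mid\Fskript) - \Ee X = \Ce_{\{\emptyset,\Omega\}}\Ee(X\mid\Fskript)$, as required.

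No step here presents a real obstacle; the proposition is essentially a bookkeeping lemma setting up notation for later use (the centring identity \eqref{perp-double} is exactly the algebraic structure appearing inside the expectation in \eqref{eq:Mprod}). The only small care required is to ensure that each conditional expectation is well-defined, which follows from the integrability of $X$ and, for the iterated conditional expectation $\Ee(\Ee(X\mid\Fskript')\mid\Fskript)$, from the contractivity of conditional expectation in $L^1$.
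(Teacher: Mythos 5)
Your proof is correct and is exactly the routine verification the paper has in mind — the authors simply write that "all assertions of the proposition follow directly from the properties of conditional expectations, and we omit the proof," and your computations supply those omitted details faithfully. Nothing to add.
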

All assertions of the proposition follow directly from the properties of conditional expectations, and we omit the proof. Geometrically, $\Ce_\Fskript X$ can be interpreted as the residual from the orthogonal projection of $X$ onto the set of $\Fskript$-measurable functions. We will use the shorthand $\Ce_X := \Ce_{\sigma(X)}$.

\begin{corollary}\label{cor:M_center}
If one of the moment conditions in Definition~\ref{def:moment} holds, then
\begin{equation}\label{eq:Mprod_center}
    M_\rho^2(X_1,\dots,X_n)
    = \Ee\left(\prod_{i=1}^n - \Ce_{X_i} \Ce_{X'_i} \psi_i(X_i - X'_i)\right)
\end{equation}
and
\begin{equation}\label{eq:Mprod_center_total}
    \overline{M}_\rho^2(X_1,\dots,X_n)
    = \Ee\left(\prod_{i=1}^n \left(1 - \Ce_{X_i} \Ce_{X'_i} \psi_i(X_i - X'_i)\right)\right) - 1.
\end{equation}
The factors can be written explicitly as
\begin{align}\label{eq:psi_double_center}
    \Ce_{X_i} \Ce_{X'_i} \psi_i(X_i - X'_i) =\,& \psi_i(X_i - X'_i) - \Ee[\psi_i(X_i - X'_i) \mid X'_i] \\ &- \Ee[\psi_i(X_i - X'_i) \mid X_i] + \Ee\psi_i(X_i - X'_i). \notag
\end{align}
\end{corollary}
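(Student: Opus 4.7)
The proof reduces to a direct translation between the product of four terms inside the brackets of \eqref{eq:Mprod} and the double centering $-\Ce_{X_i}\Ce_{X'_i}\psi_i(X_i - X'_i)$. First I would verify the explicit factor formula \eqref{eq:psi_double_center} by applying \eqref{perp-double} from Proposition~\ref{prop:center} to $Z := \psi_i(X_i - X'_i)$ with $\Fskript = \sigma(X_i)$, $\Fskript' = \sigma(X'_i)$. The only nontrivial simplification is that $\Ee[\Ee(\psi_i(X_i - X'_i) \mid X'_i) \mid X_i] = \Ee\psi_i(X_i - X'_i)$; this follows because $\Ee(\psi_i(X_i - X'_i) \mid X'_i)$ is $\sigma(X'_i)$-measurable and $\sigma(X_i), \sigma(X'_i)$ are independent, so conditioning on $\sigma(X_i)$ collapses to the unconditional expectation. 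Comparing with the bracketed expression in \eqref{eq:Mprod} and inserting an overall minus sign yields \eqref{eq:Mprod_center}.

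For the total multivariance identity \eqref{eq:Mprod_center_total}, I would abbreviate $Y_i := -\Ce_{X_i}\Ce_{X'_i}\psi_i(X_i - X'_i)$ and expand the product via the distributive law,
\begin{equation*}
    \prod_{i=1}^n (1 + Y_i) = \sum_{S \subset \{1,\dots,n\}} \prod_{i \in S} Y_i,
\end{equation*}
splitting the sum on the right according to $|S| = 0$, $|S| = 1$, and $|S| \geq 2$. The $|S| = 0$ term is $1$, and each $|S| = 1$ term satisfies $\Ee Y_i = -\Ee[\Ce_{X_i}(\Ce_{X'_i}\psi_i(X_i - X'_i))] = 0$ by the tower property since $\Ee[\Ce_{X_i} Z] = 0$ for any integrable $Z$. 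Thus, after subtracting $1$ and taking expectations, only the terms with $|S| \geq 2$ survive. By \eqref{eq:Mprod_center} applied to each subfamily, $\Ee\prod_{i \in S} Y_i$ equals $M_{\bigotimes_{i \in S}\rho_i}^2(X_i, i \in S)$, which matches the summand in the definition \eqref{def:total_multivariance} of $\overline{M}_\rho^2$.

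The argument is mostly bookkeeping and rests entirely on identities already available in the paper; the only step needing care is the independence-based simplification in the first paragraph, and the verification that the $|S| = 1$ terms vanish so that the ``$-1$'' correction in \eqref{eq:Mprod_center_total} is tight. Finiteness of all expectations involved is guaranteed under any of the moment conditions of Definition~\ref{def:moment} by Proposition~\ref{prop:representation}, which legitimizes the use of \eqref{eq:Mprod} as a starting point and the interchange of sum and expectation in the expansion for $\overline{M}_\rho^2$.
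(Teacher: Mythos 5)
Your proof is correct and follows essentially the same route as the paper's: derive \eqref{eq:psi_double_center} from Proposition~\ref{prop:center} (using the independence of $X_i$ and $X'_i$ to collapse the iterated conditional expectation), read off \eqref{eq:Mprod_center} from \eqref{eq:Mprod}, then expand $\prod_{i=1}^n(1+Y_i)$ and observe that the empty subset contributes $1$, the singletons have zero expectation, and each $|S|\geq 2$ term gives the corresponding multivariance. The only cosmetic difference is that the paper organizes the expansion via elementary symmetric polynomials $e_m$ rather than summing directly over subsets $S$, which amounts to the same bookkeeping.
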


\begin{proof}
The identity \eqref{eq:psi_double_center} follows directly from the definition of the double centering operator in Prop.~\ref{prop:center}. The representation~\eqref{eq:Mprod_center} is an immediate consequence of \eqref{eq:Mprod} in Prop.~\ref{prop:representation}.
For representation~\eqref{eq:Mprod_center_total} of the total multivariance, write $a_i := - \Ce_{X_i} \Ce_{X'_i} \psi_i(X_i - X'_i)$. We can expand the product
$$
    \prod_{i=1}^n (1 + a_i)
    = \sum_{m=0}^n e_m(a_1, \dots, a_n),
$$
where the function $e_m(a_1, \dots, a_n)$ is the $m$th elementary symmetric polynomial in $(a_1, \dots, a_n),$ i.e.
$$e_m(a_1, \dots, a_n) = \sum_{1\leq i_1< \dots < i_m\leq n} a_{i_1} \cdot \ldots \cdot a_{i_m}.$$
In particular, $e_0(a_1, \dots, a_n) = 1$ and $e_1(a_1, \dots, a_n) = a_1 + \dots + a_n$. Taking expectations yields
\begin{equation}\begin{aligned}
    \Ee \left[\prod_{i=1}^n (1+ a_i)\right] - 1  &= \sum_{m=1}^n \Ee\left[ e_m(a_1, \dots, a_n) \right] - 1 \\
    &= \sum_{\substack{1\leq i_1< \dots < i_m \leq n\\2 \leq m \leq n}} \Ee\left[ a_{i_1} \cdot \ldots \cdot a_{i_m} \right] \\
    &= \sum_{\substack{1\leq i_1< \dots < i_m \leq n\\2 \leq m \leq n}} M^2_\rho(X_{i_1},\dots,X_{i_m}) \\&= \overline{M}_\rho^2(X_1,\dots,X_n),
\end{aligned}\end{equation}
as claimed. Note that the first elementary symmetric polynomial $e_1$ does not contribute since $\Ee[a_i] = 0$ for all $i \in \{1, \dots, n\}$.
\end{proof}

\subsection{Gaussian multivariance}\label{sub:gaussian}
Recall that for a real-valued negative definite function $\psi:\real^d\to\real$ the matrix $(\psi(\xi_j)+\psi(\xi_k)-\psi(\xi_j-\xi_k))_{j,k=1,\dots,n}$, $\xi_1,\dots,\xi_n\in\real^d$, $n\in\nat$, is positive semidefinite, see \cite[Def.~3.6.6]{Jaco2001}.
Therefore, we can associate with any cndf $\psi$ some Gaussian random field indexed by $\real^d$.
\begin{definition}\label{def:gaussian}
    Assume that 
    $X_1, \dots, X_n$ satisfy one of the moment conditions in Definition~\ref{def:moment} and let $G_1,\dots, G_n$ be independent (also independent of $X_1, \dots, X_n$), stationary Gaussian random fields with
    \begin{equation}\label{gaussianprocess}
        \Ee G_i(\xi) = 0
        \quad\text{and}\quad
        \Ee(G_i(\xi)G_i(\eta)) = \psi_i(\xi) + \psi_i(\eta) - \psi_i(\xi-\eta)
    \end{equation}
    for $\xi,\eta \in \real^{d_i}$.
    The \emph{Gaussian multivariance} of $(X_1, \dots, X_n)$ is defined by
    \begin{equation}
        \GC^2(X_1,\dots,X_n) = \Ee\left(\prod_{i=1}^n X_i^{G_i}X_i'^{G_i}\right)
    \end{equation}
    where $(X_1',\dots, X_n')$ is an independent copy of $(X_1,\dots,X_n)$ and
    \begin{equation}\label{def:gaussiancentering}
        X_i^{G_i} := G_i(X_i) - \Ee(G_i(X_i) \mid G_i).
    \end{equation}

\end{definition}

\begin{remark}
\ \ a) \
Using the centering operator $\Ce$ from Proposition~\ref{prop:center}, we can write \eqref{def:gaussiancentering} as $X_i^{G_i} = \Ce_{G_i} G_i(X_i)$.

\smallskip b) \
In the bivariate case $n=2$ Gaussian multivariance coincides with the Gaussian covariance defined in
\cite[Sec.~7]{part1}.

\smallskip c) \
If $\psi_i$ is given by the Euclidean norm, then $G_i$ is a Brownian field indexed by $\real^{d_i}$. In particular, if $n=2$ and both $\psi_1$ and $\psi_2$ are given by the Euclidean norm, then $\GC(X_1,X_2)$ coincides with the \emph{Brownian covariance} of Sz\'ekely and Rizzo \cite{SzekRizz2009}.

\smallskip d) \
    If $\psi_i(x) = |x|^\alpha$, then $G_i$ is a fractional Brownian field with Hurst exponent $H = \frac{\alpha}{2}$, cf.~\cite[Sec.~4]{SzekRizz2009}.
\end{remark}

\begin{theorem}\label{thm:MequalG}
Suppose that one of the moment conditions of Definition~\ref{def:moment}  holds and $\Ee(\psi_i(X_i)^{\frac{n}{2}}) < \infty$ for $i=1,\ldots,n.$ Then distance multivariance and Gaussian multivariance coincide, i.e.
\begin{equation}
    M_\rho(X_1,\dots,X_n) = \GC(X_1,\dots,X_n).
\end{equation}
\end{theorem}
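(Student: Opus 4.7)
The plan is to verify $\GC^2(X_1, \dots, X_n) = M_\rho^2(X_1, \dots, X_n)$ by computing the Gaussian-side expectation via the tower rule, conditioning first on $(\bm X, \bm X')$ to integrate out the randomness in the fields $G_i$, and then matching the result with the representation \eqref{eq:Mprod_center} of $M_\rho^2$ from Corollary~\ref{cor:M_center}. The argument rests on two structural facts: the mutual independence of $G_1, \dots, G_n$ and of $(G_1, \dots, G_n)$ from $(\bm X, \bm X')$, and the four-term identity \eqref{eq:psi_double_center} for the double centering operator.

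First, I would establish the conditional factorization
\begin{equation*}
    \Ee\left[\prod_{i=1}^n X_i^{G_i} X_i'^{G_i} \,\middle|\, \bm X, \bm X'\right]
    = \prod_{i=1}^n \Ee[X_i^{G_i} X_i'^{G_i} \mid X_i, X_i'].
\end{equation*}
Setting $h_i(G_i) := \Ee[G_i(X_i)\mid G_i] = \int G_i(\xi)\, \mathcal{L}(X_i)(\dd\xi)$, which depends on $G_i$ only (the law $\mathcal{L}(X_i) = \mathcal{L}(X_i')$ being deterministic), each factor $X_i^{G_i} X_i'^{G_i} = (G_i(X_i) - h_i(G_i))(G_i(X_i') - h_i(G_i))$ is, conditionally on $(X_i, X_i')$, a function of $G_i$ alone, and the $G_i$ remain mutually independent given $(\bm X, \bm X')$.

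Next, I would evaluate one conditional factor. Expanding the product into four terms and using the covariance rule \eqref{gaussianprocess} together with $\psi_i(0) = 0$, the $\psi_i(X_i)$, $\psi_i(X_i')$, and $\Ee\psi_i(X_i)$ contributions cancel pairwise, leaving
\begin{equation*}
    \Ee[X_i^{G_i} X_i'^{G_i} \mid X_i, X_i']
    = -\psi_i(X_i - X_i') + \Ee[\psi_i(X_i - X_i') \mid X_i] + \Ee[\psi_i(X_i - X_i') \mid X_i'] - \Ee\psi_i(X_i - X_i'),
\end{equation*}
which by \eqref{eq:psi_double_center} equals $-\,\Ce_{X_i}\Ce_{X_i'}\psi_i(X_i - X_i')$. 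Taking the outer expectation over $(\bm X, \bm X')$ and invoking \eqref{eq:Mprod_center} then yields $\GC^2 = M_\rho^2$.

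The main technical obstacle will be integrability, so that Fubini's theorem and the tower rule are applicable to the full product. Conditionally on $X_i$, the variable $G_i(X_i)$ is centred Gaussian with variance $2\psi_i(X_i)$, hence $\Ee|G_i(X_i)|^n = c_n\, \Ee\psi_i(X_i)^{n/2} < \infty$ by the assumed moment condition, and $h_i(G_i)$ is controlled similarly via Jensen. Hölder's inequality with all exponents equal to $n$ applied across the $n$ product factors, combined with the independence of $\bm X$ and $\bm X'$ to decouple the two copies in terms such as $\Ee\psi_i(X_i)^{n/2}\psi_i(X_i')^{n/2}$, then bounds $\Ee\prod_i |X_i^{G_i} X_i'^{G_i}|$ by a finite product of moments of $\psi_i(X_i)^{n/2}$, justifying the manipulations above.
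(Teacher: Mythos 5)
Your proposal is correct and follows essentially the same route as the paper's proof: condition on $(\bm X,\bm X')$, use the conditional independence of the $G_i$ and the covariance formula \eqref{gaussianprocess} to identify each conditional factor with $-\Ce_{X_i}\Ce_{X_i'}\psi_i(X_i-X_i')$, and match with \eqref{eq:Mprod_center}. The integrability justification is sketched more loosely than in the paper (which carefully chains conditional Cauchy--Schwarz, independence of $\bm X$ and $\bm X'$, the generalized H\"older inequality with exponents $n$, and conditional Jensen to land on $\Ee\psi_i(X_i)^{n/2}$), but your outline invokes the same ingredients in a workable order.
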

\begin{proof}
By Corollary~\ref{cor:M_center} we can represent squared multivariance in the product form \eqref{eq:Mprod_center}. Each of the factors can be rewritten as
    \begin{equation}
    \begin{aligned}
    &- C_X C_{X'} \psi(X-X')  \\
    &\quad=  C_X C_{X'}\left(\psi(X)+\psi(X') -\psi(X-X')\right)\\
    &\quad= C_X C_{X'} \Ee(G(X)G(X') \mid X,X') \\
    &\quad= \Ee(G(X)G(X') \mid X,X') - \Ee(G(X) G(X') \mid X)\\
    &\quad\qquad\qquad\mbox{} - \Ee(G(X)G(X') \mid X') + \Ee(G(X)G(X'))\\
    &\quad= \Ee\left[ \left( G(X)- \Ee(G(X) \mid G)\right) \left(G(X')- \Ee(G(X') \mid G)\right) \:\middle|\: X,X'\right]\\
    &\quad  = \Ee(X^G X'^G \mid X,X'),\\
    \end{aligned}
    \end{equation}
where we have used the covariance structure \eqref{gaussianprocess} of the Gaussian process $G$ in the third line.
Putting everything together, we have
\begin{align*}
     &M_\rho^2(X_1,\dots,X_n)
    = \Ee \left(\prod_{i =1}^n - C_{X_i} C_{X'_i} \psi_i(X_i-X_i')\right) \\
    &= \Ee\left(\prod_{i=1}^n \Ee\left( X_i^{G_i}X_i'^{G_i} \;\middle|\; X_i,X_i'\right)\right)=  \Ee\left(\prod_{i=1}^n X_i^{G_i}X_i'^{G_i}\right)
        = \GC^2(X_1,\dots,X_n).
\end{align*}
Note that for the penultimate equality the absolute integrability of the integrand, i.e. $\Ee\left(\prod_{i=1}^n |X_i^{G_i}X_i'^{G_i}|\right)< \infty$, is required.

Writing $\Fskript := \sigma(X_i,i=1,\ldots,n)$ and $\Fskript' := \sigma(X_i',i=1,\ldots,n)$, we obtain
\begin{equation*}
\begin{aligned}
\Ee&\left(\prod_{i=1}^n |X_i^{G_i}X_i'^{G_i} |\right) = \Ee\left(\prod_{i=1}^n \Ee\left(|X_i^{G_i}X_i'^{G_i} | \;\middle|\; \Fskript,\Fskript' \right)\right)\\
&\leq \Ee\left(\prod_{i=1}^n \sqrt{\Ee\left(|X_i^{G_i}|^2 \;\middle|\; \Fskript,\Fskript'\right)\Ee\left(|X_i'^{G_i} |^2 \;\middle|\; \Fskript,\Fskript'\right)}\right)\\
&= \Ee\left(\sqrt{\prod_{i=1}^n \Ee\left(|X_i^{G_i}|^2 \;\middle|\;\Fskript \right)}\right) \cdot \Ee\left(\sqrt{\prod_{i=1}^n \Ee\left(|X_i'^{G_i}|^2 \;\middle|\; \Fskript'\right)}\right)\\
\label{eq:jensenm}& = \Ee\left(\sqrt{\prod_{i=1}^n \Ee\left(|X_i^{G_i}|^2 \;\middle|\; \Fskript\right)}\right)^2
\leq \left(\prod_{i=1}^n\Ee\left[\left(\Ee\left(|X_i^{G_i}|^2 \;\middle|\;\Fskript\right)\right)^\frac{n}{2}\right]\right)^\frac{2}{n}\\
&\leq \left(\prod_{i=1}^n\Ee\left[\Ee\left(|X_i^{G_i}|^n \;\middle|\;\Fskript\right)\right]\right)^\frac{2}{n}
= \left(\prod_{i=1}^n\Ee\left(|X_i^{G_i}|^n \right)\right)^\frac{2}{n},
\end{aligned}
\end{equation*}
where we used successively the independence of the $G_i$, the conditional H\"older inequality \cite[7.2.4]{ChowTeic1997}, the independence and identical distribution of $(X_i, i=1,\ldots,n)$ and $(X_i', i=1,\ldots,n)$, the generalized H\"older inequality \cite[p. 133, Pr. 13.5]{schilling-mims} and the conditional Jensen inequality \cite[7.1.4]{ChowTeic1997}.\\
Finally, note that for $n\in\nat$ the elementary inequality $|a+b|^n \leq 2^{n-1} (|a|^n+|b|^n)$ and the formula for absolute moments of Gaussian random variables, i.e. $\Ee(|G_i(t)|^n) = 2^\frac{n}{2} \Gamma(\tfrac{n+1}{2}) \pi^{-\frac{1}{2}} [\Ee G_i(t)^2]^\frac{n}{2},$ and $\Ee [G_i(t)^2] = 2\psi_i(t)$ imply
\begin{equation}
\Ee|X_i^{G_i}|^n \leq 2^n \Ee|G_i(X_i)|^n
 = 2^{2n} \Gamma(\tfrac{n+1}{2})\pi^{-\frac{1}{2}} \Ee(\psi_i(X_i)^{\frac{n}{2}}).
\end{equation}
which proves the desired integrability.
\end{proof}

We conclude this section by comparing (total) distance multivariance to related approaches in \cite{BakiSzek2011} and to the multivariate Hilbert-Schmidt independence criterion ($\mathrm{dHSIC}$) of \cite{PfisBuehSchoPete2017}.

\subsection{Comparison with \textup{\cite{BakiSzek2011}}} \label{sec:BS11}
The problem of generalizing distance covariance of two random variables $X,Y$ to multiple variables has been discussed in a short paragraph `\emph{How to \textup{(}not\textup{)} extend \textup{[}distance covariance\textup{]} $\mathcal{V}(X,Y)$ to more than two random variables}' in \cite{BakiSzek2011}. In the notation of our paper they discuss for three random variables $X,Y,Z$ the following objects:

    \medskip a)
    Gaussian Covariance $\GC(X,Y,Z) = \Ee\left(X^G X'^G Y^G Y'^G Z^G Z'^G\right)$ (cf.~Section~\ref{sub:gaussian}) where $G$ is a Brownian motion. This approach is dismissed in \cite{BakiSzek2011} since it does not characterize the independence of $X,Y,Z$.

    \smallskip b)
    The quantity
    \begin{equation}\label{eq:BS_int}
        \int_{\real^d} \left|\Ee \left[\ee^{\ii (\scalp{X}{t_1} + \scalp{Y}{t_2} + \scalp{Z}{t_3})}\right] - f_{X}(t_1) f_{Y}(t_2) f_{Z}(t_3)\right|^2 \rho(\ddt_1, \ddt_2, \ddt_3);
    \end{equation}
    -- this should be compared with the similar, yet different expression \eqref{eq:M_rho_L2}. Bakirov and Sz\'ekely dismiss this approach, since the integral can become infinite if $Z\equiv 0$, even if $X$ and $Y$ are bounded and independent; note that in this case the three random variables $X,Y,Z$ are actually independent.

    \smallskip c)
    The (bivariate) distance covariance of $U \sim \mathcal{L}(X,Y,Z)$ and $V \sim \mathcal{L}(X) \otimes \mathcal{L}(Y) \otimes \mathcal{L}(Z)$. Bakirov and Sz\'ekely recommend to use this approach, since it is able to detect independence of $X,Y,Z$, but they do not follow up this approach with a deeper discussion.

\medskip
Comparing with our results, let us add a few comments. The approach a) is equivalent to the calculation of distance multivariance $M_\rho(X,Y,Z)$ (based on Euclidean distance), by Theorem~\ref{thm:MequalG}. Consistent with the remarks of \cite{BakiSzek2011}, distance multivariance cannot characterize independence, cf.~Theorem~\ref{thm:independence}. It serves, however, as a building block of \emph{total distance multivariance}, which \emph{does} characterize independence.

If $Z\equiv 0$, the expression \eqref{def:multivariance} is zero, i.e.\ it does not suffer from the particular integrability problems as \eqref{eq:BS_int}. However, under certain conditions, it coincides with \eqref{eq:BS_int}, see Corollary~\ref{cor:mInd-charfun}.

Compared with c), our approach has the advantage that both distance multivariance and total distance multivariance have a very simple and efficient finite-sample representation, which retains all the benefits of the bivariate distance covariance, cf.~Theorem~\ref{thm:sample}. Also the asymptotic properties of the estimators are similar to the bivariate case, cf.~Theorems~\ref{thm:Mdistconv}, \ref{thm:Mdconv} and Section~4
in \cite{part1}.

\subsection{Comparison with $\mathrm{dHSIC}$} \label{sec:dhsic}
The multivariate Hilbert-Schmidt independence criterion ($\mathrm{dHSIC}$) was recently introduced in \cite{PfisBuehSchoPete2017}. Using our notation, $\mathrm{dHSIC}$ is given by
\begin{equation}\label{eq:dHSIC}\begin{aligned}
    \mathrm{dHSIC}(X_1,\ldots,X_n)
    &:= \Ee\left[ \prod_{i=1}^n k_i(X_i,X_i')\right] + \prod_{i=1}^n \Ee\left[k_i(X_i,X_i')\right] \\
    &\qquad\mbox{}- 2 \Ee\left[\prod_{i=1}^n \Ee\left[ k_i(X_i,X_i') \mid X_i\right]\right],
\end{aligned}\end{equation}
where the $k_i$ are continuous, bounded, characteristic, positive semidefinite kernels on $\real^{d_i}$. Here, a kernel $k(x,y)$ is said to be characteristic, if
\begin{gather*}
    \mu \mapsto \Pi(\mu) = \int k(x,\cdot) \mu(dx)
\end{gather*}
from the finite Borel measures to a suitable Hilbert space is an injective map, see \cite[Section 2.1]{PfisBuehSchoPete2017}) for details.

Note that any continuous negative definite function $\psi_i$ gives rise to a continuous positive semidefinite kernel under the correspondence
\begin{equation}\label{eq:psi_to_kernel}
    k_i(x,y) = \psi_i(x) + \psi_i(y) - \psi_i(y-x),
\end{equation}
see \cite{SejdSripGretFuku2013}. In the bivariate case ($n=2$) it is shown in \cite{SejdSripGretFuku2013} that $\mathrm{dHSIC}$ is equivalent to distance covariance with (quasi-)distance $\psi_i$. This raises the question whether equivalence of $\mathrm{dHSIC}$ and (total) distance multivariance still holds in the case $n > 2$. It can be easily shown by numerical experiments that they are not identical, at least not under the correspondence \eqref{eq:psi_to_kernel}. Nevertheless, the experiments show a strong positive association between $\mathrm{dHSIC}$ and total multivariance. Clarifying the exact nature of this association remains an open question, but we present the following related result: Given the marginal distributions $\mathcal{L}(X_1), \dots, \mathcal{L}(X_n)$, we can find kernels $k_i$, \emph{depending on these distributions}, such that $\mathrm{dHSIC}$ coincides formally with (total) distance multivariance on the random vector $(X_1, \dots, X_n)$. Note that, in general, these kernels are unbounded and its sample versions depend on all samples, thus they are beyond the restrictions imposed in \cite{PfisBuehSchoPete2017}.

\begin{proposition}
    Let $X_1, \dots, X_n$ satisfy one of the moment conditions of Definition~\ref{def:moment} and define the kernels
    \begin{equation}\label{eq:kernel_lambda}\begin{aligned}
        k^\lambda_i(x_i,x_i')
        &:= -\psi_i(x_i-x_i') + \Ee(\psi_i(x_i - X_i')) \\
        &\qquad\mbox{}+ \Ee(\psi_i(X_i - x_i')) - \Ee(\psi_i(X_i-X_i')) + \lambda,
    \end{aligned}\end{equation}
    where $\lambda \ge 0$ and write $\mathrm{dHSIC}^\lambda$ for the corresponding quantity defined in \eqref{eq:dHSIC}. Then
    \begin{align}
        \mathrm{dHSIC}^0(X_1, \dots, X_n) &= M_\rho^2(X_1, \dots, X_n),\\
        \mathrm{dHSIC}^1(X_1, \dots, X_n) &= \overline M_\rho^2(X_1,\dots,X_n).
    \end{align}
    The kernel $k^0_i$ is not characteristic in the sense of \textup{\cite[Section 2.1]{PfisBuehSchoPete2017}}.
\end{proposition}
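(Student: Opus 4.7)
The plan is to observe that $k_i^{\lambda}$ is essentially the double-centred version of $-\psi_i$ shifted by $\lambda$, so that the three terms in the $\mathrm{dHSIC}^\lambda$ formula collapse onto the product representations of (total) distance multivariance from Corollary~\ref{cor:M_center}.

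First I would record the key identification: comparing \eqref{eq:kernel_lambda} with the double-centring formula \eqref{eq:psi_double_center}, and noting that for fixed $x_i$ the expression $\Ee(\psi_i(x_i - X_i'))$ evaluated at $x_i = X_i$ equals $\Ee[\psi_i(X_i - X_i') \mid X_i]$ (because $X_i'$ is independent of $X_i$), we obtain
\begin{equation*}
    k_i^{\lambda}(X_i, X_i') = \lambda - \Ce_{X_i} \Ce_{X_i'} \psi_i(X_i - X_i').
\end{equation*}
In particular, since $\Ee[\Ce_{X_i} Z] = 0$ for any integrable $Z$, we have $\Ee[k_i^0(X_i, X_i')] = 0$, and a second application (conditioning on $X_i$ and using $X_i \perp X_i'$) gives $\Ee[k_i^0(X_i, X_i') \mid X_i] = 0$.

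For $\lambda = 0$ this immediately kills the second and third terms of \eqref{eq:dHSIC}, so
\begin{equation*}
    \mathrm{dHSIC}^0 = \Ee\!\left[\prod_{i=1}^n k_i^0(X_i,X_i')\right] = \Ee\!\left[\prod_{i=1}^n -\Ce_{X_i}\Ce_{X_i'}\psi_i(X_i-X_i')\right] = M_\rho^2(X_1,\dots,X_n),
\end{equation*}
by \eqref{eq:Mprod_center}. For $\lambda = 1$, the identity $k_i^1 = 1 + k_i^0$ combined with the vanishing of $\Ee[k_i^0(X_i,X_i')]$ and of $\Ee[k_i^0(X_i,X_i')\mid X_i]$ yields $\prod_i \Ee[k_i^1(X_i,X_i')] = 1$ and $\Ee[\prod_i \Ee[k_i^1(X_i,X_i')\mid X_i]] = 1$, so that
\begin{equation*}
    \mathrm{dHSIC}^1 = \Ee\!\left[\prod_{i=1}^n \left(1 - \Ce_{X_i}\Ce_{X_i'}\psi_i(X_i-X_i')\right)\right] + 1 - 2 = \overline M_\rho^2(X_1,\dots,X_n),
\end{equation*}
by \eqref{eq:Mprod_center_total}. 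Integrability throughout is guaranteed by the assumed moment condition from Definition~\ref{def:moment}, which is exactly what makes \eqref{eq:Mprod_center} and \eqref{eq:Mprod_center_total} valid.

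Finally, to show that $k_i^0$ is not characteristic, I would exhibit an explicit non-injectivity of the map $\mu \mapsto \Pi_{k_i^0}(\mu) = \int k_i^0(x,\cdot)\,\mu(\dd x)$. Writing $\mu_i := \law(X_i)$, direct integration gives
\begin{equation*}
    \int k_i^0(x,y)\,\mu_i(\dd x) = -\Ee\psi_i(X_i - y) + \Ee\psi_i(X_i - X_i') + \Ee\psi_i(X_i - y) - \Ee\psi_i(X_i - X_i') = 0
\end{equation*}
for every $y$, so $\Pi_{k_i^0}(\mu_i) = 0 = \Pi_{k_i^0}(0)$ while $\mu_i \neq 0$. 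The main obstacle — and really the only one — is making sure the double-centring identification is performed carefully enough to handle the fact that the "outer" expectations in \eqref{eq:kernel_lambda} are conditional expectations upon substitution of the random variables, which is just a bookkeeping matter given independence of $X_i$ and $X_i'$.
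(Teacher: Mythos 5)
Your proposal is correct and follows essentially the same route as the paper: both hinge on the identification $k_i^\lambda(X_i,X_i') = \lambda - \Ce_{X_i}\Ce_{X_i'}\psi_i(X_i-X_i')$, the resulting observations $\Ee[k_i^\lambda(X_i,X_i')] = \Ee[k_i^\lambda(X_i,X_i')\mid X_i] = \lambda$, the collapse of \eqref{eq:dHSIC} onto \eqref{eq:Mprod_center} and \eqref{eq:Mprod_center_total}, and the same explicit computation $\Pi(\mu_i)=0$ for the non-characteristic claim. The only cosmetic difference is that you evaluate the two auxiliary terms of $\mathrm{dHSIC}^\lambda$ separately for $\lambda=0$ and $\lambda=1$, while the paper compresses both into the single identity $\mathrm{dHSIC}^\lambda = \Ee\bigl[\prod_i(\lambda+k_i^0(X_i,X_i'))\bigr] - \lambda^n$.
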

\begin{proof}
    Observe that $\Ee \left[k^\lambda_i(X_i,X_i')\right] = \Ee\left[ k^\lambda_i(X_i,X_i') \mid X_i\right]  = \lambda$, such that \eqref{eq:dHSIC} simplifies to
    \begin{gather*}
        \mathrm{dHSIC}^\lambda(X_1,\ldots,X_n) := \Ee\left[ \prod_{i=1}^n \left(\lambda + k^0_i(X_i,X_i')\right)\right] - \lambda^n.
    \end{gather*}
    This is equal to \eqref{eq:Mprod_center} for $\lambda = 0$ and to \eqref{eq:Mprod_center_total} for $\lambda = 1$. It remains to show that $k_i^0$ is not characteristic. To this end, denote by $\mu_i$ the distribution of $X_i$. Then
    \begin{gather*}
        \Pi(\mu_i)(y) = \int k^\lambda(x,y) \mu_i(x) = \Ee \left[k^\lambda_i(X_i,y)\right] = \lambda.
    \end{gather*}
    If $\lambda = 0$, then $\Pi(\mu_i) = 0 = \Pi(\bm{0})$, where $\bm{0}$ is the measure of mass zero. This shows that $\Pi$ is not injective, and therefore that $k_i^0$ is not characteristic.
\end{proof}

\section{Statistical properties of distance multivariance}\label{sec:stats}

\subsection{Sample distance multivariance} \label{sec:sampleM}
We now consider a sample of $N$ observations $(\bm{x}^{(1)}, \dots, \bm{x}^{(N)})$  of the random vector $\bm{X} = (X_1, \dots, X_n)$. Every observation $\bm{x}^{(j)}$ is a vector in $\real^d$, $d= d_1 + \dots + d_n$, of the form $\bm{x}^{(j)} = \left(x_1^{(j)}, \dots, x_n^{(j)} \right)$, with each $x_i^{(j)}$ in $\real^{d_i}$. Given such a sample, we denote by $(\hat X^{\sampleN}_1,\dots,\hat X^{\sampleN}_n)$ the random vector with the corresponding empirical distribution. Evaluating distance multivariance at this vector, we obtain the sample distance multivariance
\begin{gather*}
    \MN^2_\rho(x^{(1)},\dots,x^{(N)})
    := M^2_\rho(\hat X_1^{\sampleN},\dots,\hat X_n^{\sampleN}),
\end{gather*}
which turns out to have a surprisingly simple representation.

Recall that the Hadamard (or Schur) product of two matrices $A,B\in\real^{N\times N}$ is the $N\times N$-matrix $A\circ B$ with entries $(A\circ B)_{jk} = A_{jk}B_{jk}$.
\begin{theorem}\label{thm:sample}
    Let $(\bm{x}^{(1)}, \dots, \bm{x}^{(N)})$ be a sample of size $N$.

    \smallskip\textup{a)}
    The sample distance multivariance can be written as
    \begin{equation} \label{eq:empGC}\begin{aligned}
        \MN^2_\rho(\bm{x}^{(1)},\dots, \bm{x}^{(N)})
        &= \frac{1}{N^2} \sum_{j,k=1}^N (A_1\circ \ldots \circ A_n)_{jk}\\
        &= \frac{1}{N^2} \sum_{j,k=1}^N (A_1)_{jk} \cdot \ldots \cdot (A_n)_{jk};
    \end{aligned}\end{equation}
    here, $A_i := -CB_iC$ where $B_i = \left(\psi_i\big(x_i^{(j)}-x_i^{(k)}\big)\right)_{j,k = 1,\dots,N}$ is the distance matrix and $C = I - \tfrac{1}{N}\One$ the centering matrix.

    \smallskip\textup{b)}
    The sample total distance multivariance can be written as
    \begin{equation} \label{eq:empTGC}
        \MNo^2_\rho(\bm{x}^{(1)},\dots, \bm{x}^{(N)}) =  \left[\frac{1}{N^2} \sum_{j,k=1}^N (1+(A_1)_{jk}) \cdot \ldots \cdot (1+ (A_n)_{jk})\right] - 1.
    \end{equation}
\end{theorem}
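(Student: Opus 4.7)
The plan is to build on the product representation of $M_\rho^2$ from Corollary~\ref{cor:M_center} rather than on the $L^2$-definition. When we substitute the empirical random vectors $(\hat X_1^{\sampleN},\dots,\hat X_n^{\sampleN})$ into \eqref{eq:Mprod_center}, every conditional expectation collapses into a finite sample average, and the double-centering operator $\Ce_{X_i}\Ce_{X'_i}$ corresponds precisely to the matrix operation $B_i \mapsto CB_iC$. Moment conditions are not an issue, since the empirical distribution has finite support.

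For part (a), specialize \eqref{eq:Mprod_center} to the empirical measure. The outer expectation over the independent pair $(\hat X^{\sampleN},\hat X'^{\sampleN})$ becomes $\tfrac{1}{N^2}\sum_{j,k=1}^N$, with $j$ indexing the value of $\hat X^{\sampleN}$ and $k$ the value of $\hat X'^{\sampleN}$. For fixed $(j,k)$, unfolding the four terms in \eqref{eq:psi_double_center} produces $(B_i)_{jk}$, its $k$th column mean, its $j$th row mean and its grand mean, with signs $+,-,-,+$; a short matrix computation identifies this signed combination with $(CB_iC)_{jk}$. Hence each factor equals $(A_i)_{jk}$, which is exactly \eqref{eq:empGC}.

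For part (b), apply (a) to each of the $2^n-n-1$ subfamilies appearing in \eqref{def:total_multivariance} and swap the order of summation so that $(j,k)$ indexes the outermost sum. With $a_i := (A_i)_{jk}$ the inner sum is $\sum_{m=2}^n e_m(a_1,\dots,a_n)$, where $e_m$ is the $m$th elementary symmetric polynomial; using $\prod_{i=1}^n(1+a_i) = \sum_{m=0}^n e_m(a_1,\dots,a_n)$ this equals $\prod_{i=1}^n(1+a_i) - 1 - \sum_{i=1}^n a_i$. The final and main technical step is to observe that $C\mathbf{1}=0$, so every $A_i = -CB_iC$ has all row and all column sums zero, giving $\sum_{j,k}(A_i)_{jk}=0$ for every $i$. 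The linear term therefore vanishes after the outer sum and \eqref{eq:empTGC} follows. Without this row/column sum-zero property the compact product form would not close, so it is really what makes the total case as cheap to compute as the plain multivariance.
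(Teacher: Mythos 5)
Your proof is correct and follows essentially the same route as the paper: part (a) by specializing the population-level formula \eqref{eq:Mprod_center} to the empirical distribution and identifying the double centering $\Ce_{X_i}\Ce_{X'_i}$ with $B_i \mapsto CB_iC$. For part (b) the paper substitutes the empirical measure directly into \eqref{eq:Mprod_center_total}, whereas you rebuild that formula from (a) by summing over subfamilies and using the elementary symmetric polynomial identity — a harmless reorganization that makes explicit the same cancellation (the vanishing linear term, which in the sample version comes from $C\One = 0$, i.e.\ zero row/column sums of $A_i$, exactly the fact recorded in Remark~\ref{rem:sample}.e).
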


\begin{remark}\label{rem:sample}
\ \ a) \
If $n$ is even, then $A_i$ can be replaced by $-A_i.$
This explains the different sign used in the case $n=2$, cf.\ \cite[Def.~3]{SzekRizz2009}
and \cite[Lem.~4.2, Rem.~4.3]{part1}.

If $n=2$, then $\sum_{j,k=1}^N (A_1 \circ A_2)_{jk} = \trace(A_2^\top A_1)$ and the generalized sample distance covariance from \cite[Sec.~4]{part1} is recovered. If in addition $\psi_i(x) = |x|$, i.e.~the Euclidean distance, then we get the sample distance covariance of Sz\'ekely \emph{et al.} \cite{SzekRizzBaki2007, SzekRizz2009}.

\smallskip b) \
Since the $\psi_i$ are continuous negative definite functions, the matrices $-B_i$ are conditionally positive definite matrices, i.e.\ $-\lambda^\top B_i \lambda \geq 0$ for all non-zero $\lambda$ in $\real^N$ with $\lambda_1 + \dots + \lambda_N = 0$.
As the double centerings of conditionally positive definite matrices, the matrices $A_i$ are positive definite. By Schur's theorem, the $N$-fold Hadamard product of positive definite matrices is again positive definite, see Berg and Forst \cite[Lem.~3.2]{BerFor}. This gives a simple explanation as to why $\MN_\rho^2$ is always a non-negative number.

\smallskip c) \
Important special cases are when the $\psi_i$ are chosen as Euclidean distance, or as Minkowski distances. In these cases, each $B_i$ is a distance matrix. In general, $B_i$ need not be a distance matrix, since only $\sqrt{\psi_i}$, but not necessarily $\psi_i$ itself, defines a distance.  Still, $\psi_i$ always defines a quasi-metric, i.e.~a metric with a relaxed triangle inequality, cf.~\cite[Sec.~2]{part1}.

\smallskip d) \
Even though total distance multivariance is defined as the sum of the multivariances of all $2^n - 1 - n$ subfamilies of $\{X_1, \dots, X_n\}$ with at least two members, cf.~\eqref{def:total_multivariance}, its empirical version \eqref{eq:empTGC} has a computational complexity of only $\mathcal{O}(nN^2)$.

\smallskip e) \
The row- and column sums of each $A_i$ are zero. This is a consequence of the double centering $A_i = -C B_i C$. \label{item:zero}

\smallskip f) \
Equation \eqref{eq:empGC} is a direct analogue of the representation \eqref{eq:Mprod_center}, when the centering operator is replaced by the centering matrix. The same is true for \eqref{eq:empTGC} in relation to \eqref{eq:Mprod_center_total}.
\end{remark}

\begin{proof}[Proof of Theorem~\ref{thm:sample}]
Since the support of the empirical distribution is finite, the moment conditions of Definition~\ref{def:moment} are trivially satisfied. Therefore, we can use the representation \eqref{eq:Mprod_center} to get
\begin{align}
\notag
\MN^2_\rho&(x^{(1)},\dots,x^{(N)})
= M^2_\rho(\hat X_1^{\sampleN},\dots,\hat X_n^{\sampleN})\\
\notag&= \Ee\Bigg( \prod_{i=1}^n \Big[-\psi_i(\hat X^{\sampleN}_{i}-\hat X'^{\sampleN}_{i})
   + \Ee\left(\psi_i\left(\hat X^{\sampleN}_{i}-\hat X'^{\sampleN}_{i}\right) \;\middle|\; \hat X^{\sampleN}_i\right)\\
\label{eq:sample_interm}&\qquad\qquad\qquad\mbox{}
    + \Ee\left(\psi_i\left(\hat X^{\sampleN}_{i}-\hat X'^{\sampleN}_{i}\right) \;\middle|\; \hat X'^{\sampleN}_i\right)
    - \Ee \psi_i\left(\hat X^{\sampleN}_{i}-\hat X'^{\sampleN}_{i}\right)\Big]\Bigg)\\
\notag&= \frac{1}{N^2}\sum_{j,k=1}^N \Bigg( \prod_{i=1}^n \Big[  -\psi_i\left(x_i^{(j)}-x_i^{(k)}\right)
    + \Ee\left(\psi_i\left(\hat X_i^{\sampleN}-\hat X_i'^{\sampleN}\right) \;\middle|\; \hat X_i^{\sampleN} = x_i^{(j)}\right)\\
\notag&\qquad\qquad\qquad\mbox{}
    + \Ee\left(\psi_i\left(\hat X_i^{\sampleN}-\hat X_i'^{\sampleN}\right) \;\middle|\; \hat X_i'^{\sampleN} = x_i^{(k)}\right) - \Ee \psi_i\left(\hat X_i^{\sampleN}-\hat X_i'^{\sampleN}\right) \Big]\Bigg).
\end{align}
Denoting by $\One_N$ the column vector consisting of $N$ ones, we can rewrite the individual terms in \eqref{eq:sample_interm} as
\begin{subequations}\label{eq:sample_dm}
\begin{align}
    \psi_i \left(x_i^{(j)} - x_i^{(k)}\right) &= (B_i)_{jk}\\
    \Ee\left(\psi_i \left(\hat X_i - \hat X_i'\right) \;\middle|\; \hat X_i = x_i^{(j)}\right) &= \frac{1}{N} \sum_{l=1}^N (B_i)_{jl} = \frac{1}{N} \left(\One_N^\top B_i\right)_j \\ 
    \Ee\left(\psi_i \left(\hat X_i - \hat X_i'\right) \;\middle|\; \hat X'_i = x_i^{(k)}\right) &= \frac{1}{N} \sum_{m=1}^N (B_i)_{mk} = \frac{1}{N} \left(B_i \One_N\right)_k \\ 
    \Ee\psi_i \left(\hat X_i - \hat X_i'\right) &= \frac{1}{N^2} \sum_{l,m=1}^N (B_i)_{ml} = \frac{1}{N^2}\, \One_N^\top B_i \One_N. 
\end{align}
\end{subequations}
This shows that each factor on the right hand side of \eqref{eq:sample_interm} is the $(j,k)$-th entry of the matrix $A_i = -C B_i C$, and \eqref{eq:empGC} follows. The
representation \eqref{eq:empTGC} can be derived in complete analogy from \eqref{eq:Mprod_center_total}.
\end{proof}

\subsection{Estimating distance multivariance}
In this section we examine the properties of the sample distance multivariance $\MN_\rho$ as an estimator of $M_\rho$. The corresponding results for the sample total distance multivariance will be presented in the next section.

\begin{theorem}[$\MN_\rho$ is a strongly consistent estimator for $M_\rho$]\label{thm:sconsistent}
    Let one of the moment conditions of Definition~\ref{def:moment} be satisfied. Then $\MN_\rho$ is a strongly consistent estimator of $M_\rho$, i.e.
    \begin{equation}
        \MN_\rho(\bm{X}^{(1)},\dots,\bm{X}^{(N)}) \xrightarrow[N\to \infty]{} M_\rho(X_1,\dots,X_n) \quad\text{a.s.}
    \end{equation}
\end{theorem}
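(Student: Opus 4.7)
The plan is to express $\MN_\rho^2$ as a finite linear combination of classical $V$-statistics and apply the strong law of large numbers for $V$-statistics to each summand; the representation \eqref{eq:Mprod_center} will serve as the target limit.

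Starting from the intermediate representation \eqref{eq:sample_interm} in the proof of Theorem~\ref{thm:sample}, I would fully expand the product over $i=1,\dots,n$. Each factor is a sum of four terms (one pointwise $-\psi_i$ value, two empirical conditional means, and one empirical marginal mean), so the expansion yields $4^n$ summands. For each choice, combining the inner averaging sums with the outer $N^{-2}\sum_{j,k}$ produces a $V$-statistic
\begin{equation*}
V_N(h) = \frac{1}{N^r}\sum_{i_1,\dots,i_r=1}^N h\bigl(\bm{X}^{(i_1)},\dots,\bm{X}^{(i_r)}\bigr),
\end{equation*}
with $2 \le r \le 2+2n$ and signed-product kernel $h(\bm{x}^{(1)},\dots,\bm{x}^{(r)}) = \prod_{i=1}^n \pm\psi_i\bigl(x_i^{(a_i)} - x_i^{(b_i)}\bigr)$, whose argument indices $a_i,b_i \in \{1,\dots,r\}$ are dictated by the chosen term.

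I would then apply the SLLN for $V$-statistics: provided $\Ee|h| < \infty$, the $V$-statistic converges a.s.\ to $\Ee h\bigl(\bm{X}^{(1)},\dots,\bm{X}^{(r)}\bigr)$ for $\bm{X}^{(1)},\dots,\bm{X}^{(r)}$ i.i.d.\ copies of $\bm{X}$. The key observation is that each $\Ee h$ equals one of the $4^n$ expectations $\Ee\prod_i\psi_i(X_{k_i,i} - X'_{l_i,i})$ of Definition~\ref{def:moment}(a): take $k_i=1$ iff $a_i$ equals the outer-sum index $j$ and $l_i=1$ iff $b_i$ equals the outer-sum index $k$, noting that each sample-unique index (arising in terms of types 2--4) yields an independent marginal copy, exactly as the independent-coordinate vectors $\bm{X}_0,\bm{X}_0'$ do in Definition~\ref{def:moment}(a). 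Integrability of every $V$-statistic kernel therefore reduces to the mixed moment condition, which is implied by any of the three conditions in Definition~\ref{def:moment} via Lemma~\ref{lem:moments}.

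Collecting the $4^n$ almost-sure limits and comparing with the termwise expansion of the double-centered product in \eqref{eq:Mprod_center}, the sum equals $\Ee\prod_{i=1}^n\bigl(-\Ce_{X_i}\Ce_{X'_i}\psi_i(X_i-X'_i)\bigr) = M_\rho^2$. Hence $\MN_\rho^2 \to M_\rho^2$ a.s., and continuity of $\sqrt{\,\cdot\,}$ on $[0,\infty)$ yields $\MN_\rho \to M_\rho$ a.s. The main obstacle is the combinatorial bookkeeping in matching each $V$-statistic summand to the corresponding term in the population expansion, and ensuring that diagonal contributions (where the indices $i_1,\dots,i_r$ are not all distinct) do not disrupt the limit; the latter is harmless because $\psi_i(0)=0$ (from \eqref{eq:cndf}) kills many diagonal summands and the remaining diagonal corrections contribute only $O(1/N)$ relative to the off-diagonal part of the $V$-statistic.
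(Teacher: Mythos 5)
Your proof is correct and follows the same route as the paper: the paper's own proof consists of the two-sentence observation that substituting \eqref{eq:sample_dm} into \eqref{eq:sample_interm} exhibits $\MN_\rho^2$ as a $V$-statistic whose a.s.\ limit is $M_\rho^2$ by the strong law of large numbers for $V$-statistics. Your expansion into $4^n$ signed-kernel $V$-statistics, the matching of their expectations to the mixed moment condition of Definition~\ref{def:moment}(a), and the remarks on diagonal terms and continuity of the square root simply spell out the details that the paper leaves implicit.
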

\begin{proof}
    Inserting the representation \eqref{eq:sample_dm} into \eqref{eq:sample_interm}, we see that $\MN_\rho$ is a $V$-statistic. Thus the convergence of the estimator $\MN_\rho$  is just the strong law of large numbers for $V$-statistics.
\end{proof}

\begin{remark}
    In the case of $n=2$ strong consistency can be obtained under the weaker moment condition $\Ee\psi_i(X_i)<\infty$ for $i=1,2$, see \cite[Thm.~4.4]{part1}.
    For $n \geq 3$ the arguments used in \cite{part1} break down. However, we show a weak consistency result under independence and relaxed moment conditions in Corollary~\ref{cor:wconsistent} below.
\end{remark}

The next result is our main result on the asymptotics properties of the estimator $\MN_\rho$. The proof is technical and relegated to the supplement \cite{part2supp}.

\begin{theorem}[Asymptotic distribution of $\MN_\rho$]\label{thm:Mdistconv}\mbox{}
    \smallskip\textup{a)}
    Let $X_1,\dots,X_n$ be independent random variables such that the moments $\Ee\psi_i(X_i)<\infty$ and $\Ee \left[\log^{1+\epsilon}(1+|X_i|^2)\right] <\infty$ exist for some $\epsilon>0$ and all $i=1,\dots, n$. Then
    \begin{equation}\label{eq:G_conv}
        N \cdot \MN_\rho^2(\bm{X}^{(1)},\dots,\bm{X}^{(N)}) \xrightarrow[N\to \infty]{d} \|\mathds{G}\|_{L^2(\rho)}^2
    \end{equation}
    where $\mathds{G}$ is a centred, i.e.~$\Ee \mathds{G}(t) =0$, $\comp$-valued Gaussian process indexed by $\real^{d}$ with  covariance function
    \begin{equation}\label{eq:G_cov}
        \Cov(\mathds{G}(t),\mathds{G}(t'))
        = \Ee\big[\mathds{G}(t)\overline{\mathds{G}(t')}\big]
        = \prod_{i=1}^n \left(f_{X_i}(t_i-t_i')-f_{X_i}(t_i) \overline{f_{X_i}(t_i')}\right).
    \end{equation}

    \smallskip\textup{b)}
    Suppose that the random variables $X_1, \dots, X_n$ are $(n-1)$-in\-de\-pen\-dent, but not $n$-in\-de\-pen\-dent and that one of the moment conditions of Definition~\ref{def:moment} holds. Then
    \begin{equation}\label{eq:Mdistconv_b}
        N \cdot \MN^2_\rho(\bm{X}^{(1)},\dots,\bm{X}^{(N)}) \xrightarrow[N\to \infty]{} \infty \quad\text{a.s.}
    \end{equation}
\end{theorem}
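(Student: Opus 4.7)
For part (a) the plan is to express $N\cdot\MN^2_\rho$ as the squared $L^2(\rho)$-norm of an empirical process and then pass to a Gaussian weak limit in $L^2(\rho)$ by a Hilbert space CLT; the claim will then follow from the continuity of $u\mapsto \|u\|^2_{L^2(\rho)}$ via the continuous mapping theorem. Write $\hat{f}_{X_i}$ for the empirical characteristic function based on the sample of $X_i$ and set $e_i^{(j)}(t_i) := \ee^{\ii\,\scalp{X_i^{(j)}}{t_i}} - f_{X_i}(t_i)$, so that $\hat{f}_{X_i}(t_i) - f_{X_i}(t_i) = \tfrac{1}{N}\sum_{k=1}^N e_i^{(k)}(t_i)$. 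Expanding
$$\zeta_N(t) := \sqrt{N}\cdot\frac{1}{N}\sum_{j=1}^N \prod_{i=1}^n \bigl(\ee^{\ii\,\scalp{X_i^{(j)}}{t_i}} - \hat{f}_{X_i}(t_i)\bigr)$$
via $\ee^{\ii\,\scalp{X_i^{(j)}}{t_i}} - \hat{f}_{X_i}(t_i) = e_i^{(j)}(t_i) - (\hat{f}_{X_i}(t_i)-f_{X_i}(t_i))$ yields the decomposition $\zeta_N = \mathds{G}_N + R_N$, where $\mathds{G}_N(t) := \tfrac{1}{\sqrt{N}}\sum_{j=1}^N Y_j(t)$ with $Y_j(t) := \prod_{i=1}^n e_i^{(j)}(t_i)$, and $R_N$ is a finite linear combination of completely degenerate $V$-statistic terms indexed by the proper subsets $S\subsetneq\{1,\dots,n\}$.

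The first step is the Hilbert space CLT for $\mathds{G}_N$. Under the independence of $X_1,\dots,X_n$, the random functions $Y_j$ are i.i.d., centred (each factor $e_i^{(j)}(t_i)$ has mean zero), and have finite second moment in $L^2(\rho)$:
$$\Ee\|Y_j\|^2_{L^2(\rho)} = \int \prod_{i=1}^n\bigl(1-|f_{X_i}(t_i)|^2\bigr)\,\rho(\ddt) = \prod_{i=1}^n \Ee\psi_i(X_i-X_i'),$$
which is finite under $\Ee\psi_i(X_i)<\infty$ by the standard subadditivity bound for continuous negative definite functions. The Hilbert space CLT then gives $\mathds{G}_N\Rightarrow \mathds{G}$ weakly in $L^2(\rho)$, where $\mathds{G}$ is a centred Gaussian element whose covariance equals that of $Y_1$; a direct calculation using independence and $\Ee e_i^{(j)}=0$ verifies this covariance is exactly \eqref{eq:G_cov}.

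The second step is to show $\|R_N\|_{L^2(\rho)} \to 0$ in probability. Each contributing term is a $V$-statistic of order $|S^c|+1$ scaled by $N^{-|S^c|-1/2}$ with a kernel that is completely degenerate in the $k_i$-variables (since $\Ee e_i^{(k_i)} = 0$ kills every Hoeffding projection of positive order there). Computing $\Ee\|R_N\|^2_{L^2(\rho)}$ term by term via Fubini reduces matters to bounds on $\int (1-|f_{X_i}(t_i)|^2)\rho_i(\ddt_i)$ together with cross-moment analogues involving $|\hat{f}_{X_i}-f_{X_i}|^{2k}$; the former is controlled by $\Ee\psi_i(X_i)<\infty$, and the log-moment hypothesis enters here to dominate the $\rho_i$-tail of the latter. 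These estimates give $\|R_N\|_{L^2(\rho)} = O_P(N^{-1/2})$, Slutsky's lemma in $L^2(\rho)$ yields $\zeta_N\Rightarrow \mathds{G}$, and continuous mapping applied to $\|\cdot\|^2_{L^2(\rho)}$ produces \eqref{eq:G_conv}. Part (b) follows at once from the earlier results: Theorem~\ref{thm:independence}(a) gives $M_\rho(X_1,\dots,X_n)>0$ under the hypotheses, and Theorem~\ref{thm:sconsistent} gives $\MN^2_\rho \to M_\rho^2$ almost surely, so $N\cdot\MN^2_\rho \to \infty$ almost surely.

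The main obstacle I anticipate is the remainder estimate in step two. The Hilbert space CLT itself is textbook and the algebraic expansion is routine, but the $V$-statistic bookkeeping and, in particular, the use of the log-moment hypothesis at exactly the right place to dominate the $\rho_i$-tail integrals of $|\hat{f}_{X_i}-f_{X_i}|^{2k}$-type quantities is delicate; this is the reason two distinct moment conditions appear in the statement of part (a).
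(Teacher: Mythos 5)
Your proof of part (b) is identical to the paper's: strong consistency from Theorem~\ref{thm:sconsistent} plus positivity of $M_\rho$ from Theorem~\ref{thm:independence}.a) gives the a.s.\ divergence. Your proof of part (a) is correct but takes a genuinely different route. The paper decomposes $\sqrt{N}Z_N$ in terms of the joint and marginal empirical characteristic functions $\fN_S$, invokes Cs\"org\H{o}'s (1981) weak convergence theorem for the empirical characteristic process in the uniform-on-compacts topology (this is precisely where the log-moment hypothesis enters), adds a Glivenko--Cantelli argument for the residual, and then passes from $C(B_T^d)$-convergence to convergence of the $L^2(\rho)$-norms by a truncation argument: the L\'evy measures are replaced by finite truncations $\rho_{i,\epsilon}$, the continuous mapping theorem is applied to each truncation, and the tails are controlled by dominated convergence and the bound $\prod_i \Ee\psi_i(X_i-X'_i)<\infty$. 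You instead split $\zeta_N = \mathds{G}_N + R_N$ by expanding around the population (rather than empirical) marginals, apply the classical Hilbert space CLT directly in $L^2(\rho)$ to the i.i.d.\ sum $\mathds{G}_N = N^{-1/2}\sum_j Y_j$, and kill $R_N$ via $V$-statistic scaling. This avoids the truncation detour entirely and is arguably cleaner. What the paper's route buys is a proof that rests on a well-documented external result (Cs\"org\H{o}), whereas your route requires checking, via the degeneracy structure, that each piece of $R_N$ has $\Ee\|R_N^S\|^2_{L^2(\rho)} = O(N^{-|S^c|})$ with the implied constants bounded by $\prod_i \Ee\psi_i(X_i-X'_i)$.

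One inaccuracy you should fix: the claim that the log-moment hypothesis ``enters to dominate the $\rho_i$-tail'' of $|\hat{f}_{X_i}-f_{X_i}|^{2k}$-type quantities is not correct. Working out $\Ee\|R_N\|^2_{L^2(\rho)}$ term by term, independence across $i$ and complete degeneracy in $j$ and in the averaging indices force diagonal pairings, so only second moments $\Ee|\hat f_{X_i}(t_i)-f_{X_i}(t_i)|^2 = N^{-1}(1-|f_{X_i}(t_i)|^2)$ ever appear; no higher powers $|\hat f_i-f_i|^{2k}$ with $k\ge 2$ arise. Each $\int (1-|f_{X_i}(t_i)|^2)\,\rho_i(\ddt_i) = \Ee\psi_i(X_i-X'_i) \le 4\Ee\psi_i(X_i)$ is finite under $\Ee\psi_i(X_i)<\infty$ alone, with no tail subtlety. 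Consequently the log-moment assumption is not actually used anywhere in your argument; it appears in the theorem's hypotheses because the paper's proof goes through Cs\"org\H{o}'s theorem, which does require it. Your route in fact proves the statement under a strictly weaker assumption, which is a feature worth noting rather than a gap to paper over.
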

\begin{remark}
\ \ a) \
The complex-valued Gaussian process $\mathds{G}$ has to be distinguished from the Gaussian processes $G_i$ that appear in Definition~\ref{def:gaussian} of the Gaussian multivariance.

\smallskip b) \
Using the results of \cite{Csoe1985}, the $\log$-moment condition in a) can be relaxed by a weaker (but more involved) integral test cf.~\cite[Condition~($\star$)]{Csoe1985}.

From \cite[Lem.~2.7]{part1} it is readily seen that the log-moment condition in Thm.~\ref{thm:Mdistconv}.a) is equivalent to $\Ee\left[\log^{1+\epsilon}\left(1\vee\sqrt{|X_1|^2+\dots+|X_n|^2}\right)\right]<\infty$.

\smallskip c) \
The expectation of the limit in \eqref{eq:G_conv} can be calculated as
\begin{equation}\label{eq:EG}
    \Ee(\|\mathds{G}\|_{L^2(\rho)}^2)
    = \prod_{i=1}^n \int_{\real^{d_i}} \left(1-|f_{X_i}(t_i)|^2\right) \rho_i(d t_i)
    = \prod_{i=1}^n \Ee \psi_i(X_i-X_i').
\end{equation}

\smallskip d) \
From Lemma~\ref{lem:ZN} in the supplement \cite{part2supp} it can be seen that $\MN_\rho$ is a biased estimator of $M_\rho$, since in the case of non-degenerate and independent random variables
\begin{equation*}
    \Ee \Big[\MN^2_\rho(\bm{X}^{(1)},\dots,\bm{X}^{(n)})\Big]
    =  \frac{(N-1)^n+(-1)^n (N-1)}{N^{n+1}} \prod_{i=1}^n \Ee \psi_i(X_i-X_i') > 0,
\end{equation*}
while $M_\rho^2(X_1,\dots,X_n)=0$. For bivariate distance covariance, this bias has already been discussed by Cope \cite{Cope2009} and Sz\'ekely and Rizzo \cite{SzekRizz2009a}.
\end{remark}

Finally, we present a weak consistency result for $\MN_\rho$ under independence, which holds under milder moment conditions than the strong consistency result Theorem~\ref{thm:sconsistent}.
\begin{corollary}\label{cor:wconsistent}
    Suppose that $X_1,\dots,X_n$ are independent random variables with $\Ee\psi_i(X_i)<\infty$ and \\$\Ee\left[\log^{1+\epsilon}(1+|X_i|^2)\right]<\infty$ for some $\epsilon>0$ and all $i=1,\dots, n$. Then
    \begin{equation}
        \MN_\rho(\bm{X}^{(1)},\dots,\bm{X}^{(N)}) \xrightarrow[N\to \infty]{} 0 \quad\text{in probability.}
    \end{equation}
\end{corollary}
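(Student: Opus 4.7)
The plan is to derive the weak consistency as an immediate consequence of the distributional limit theorem \ref{thm:Mdistconv}\,a). Under the stated hypotheses the random variables $X_1,\dots,X_n$ are independent and the $\log$-moment and $\psi$-moment conditions of Theorem~\ref{thm:Mdistconv}\,a) are exactly what is assumed in the corollary; hence the theorem applies and yields
\begin{equation*}
    N \cdot \MN_\rho^2(\bm{X}^{(1)},\dots,\bm{X}^{(N)}) \xrightarrow[N\to\infty]{d} \|\mathds{G}\|_{L^2(\rho)}^2.
\end{equation*}

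The next step is to verify that the limiting random variable $Z := \|\mathds{G}\|_{L^2(\rho)}^2$ is a.s.\ finite. This is guaranteed by the identity
\begin{equation*}
    \Ee Z = \prod_{i=1}^n \Ee \psi_i(X_i-X_i'),
\end{equation*}
from equation \eqref{eq:EG} together with the elementary inequality $\psi_i(x-y)\le 2(\psi_i(x)+\psi_i(y))$, valid for any symmetric continuous negative definite function, which combined with $\Ee\psi_i(X_i)<\infty$ shows that each factor on the right-hand side is finite. In particular $\{N\MN_\rho^2\}_{N\in\nat}$ is tight.

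Finally, writing $\MN_\rho^2 = \tfrac{1}{N}\cdot\bigl(N\MN_\rho^2\bigr)$ and applying Slutsky's theorem (the deterministic factor $1/N$ converges to $0$ while the second factor is tight) yields $\MN_\rho^2\to 0$ in probability. The continuous mapping theorem applied to $x\mapsto\sqrt{x}$ then gives $\MN_\rho\to 0$ in probability, completing the proof. The only non-routine point is the finiteness of $\Ee Z$, but this is already recorded as a remark after Theorem~\ref{thm:Mdistconv}, so no genuine obstacle arises.
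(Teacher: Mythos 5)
Your proof is correct and follows essentially the same route as the paper: invoke Theorem~\ref{thm:Mdistconv}\,a) to get $N\cdot\MN_\rho^2\xrightarrow{d}\|\mathds{G}\|_{L^2(\rho)}^2$, then conclude $\MN_\rho^2\xrightarrow{\Pp}0$ because a sequence whose $N$-scaled version converges in distribution must itself converge in distribution (hence in probability) to zero. The paper states this last implication more tersely, while you supply the finiteness of the limit, tightness, Slutsky, and the continuous mapping step explicitly; these are useful details but not a different argument.
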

\begin{proof}
    The corollary is a direct consequence of Theorem \ref{thm:Mdistconv} and the observation that
    \begin{equation*}
        n Z_n \xrightarrow{d} Z
        \implies Z_n \xrightarrow{d} 0
        \implies Z_n \xrightarrow{\Pp} 0;
    \end{equation*}
    the second implication follows since the $d$-limit is degenerated.
\end{proof}

\subsection{Estimating total distance multivariance}\label{sec:estimate_total}
To simplify notation we write $\rho_S = \bigotimes_{i\in S}\rho_i$. Recall that
\begin{equation}\label{eq:totalms}
    \MNo_\rho^2(\bm{X}^{(1)},\dots,\bm{X}^{(N)})
    = \sum_{\substack{S\subset \{1,\dots,n\}\\ |S|\geq 2}} \MN_{\rho_S}^2(\bm{X}^{(1)},\dots,\bm{X}^{(N)}).
\end{equation}
Note that $M_{\rho_S}$ depends only on the random variables $(X_i, i\in S)$, i.e.\ $M_{\rho_S}= M_{\rho_S}(X_i, i\in S)$. This means that the sample version $\MN_{\rho_S} = \MN_{\rho_S}(\bm{X}^{(1)},\dots,\bm{X}^{(N)})$ is computed only from the $S$-coordinates of the samples $\bm{X}^{(1)},\dots,\bm{X}^{(N)}$.   The results of this section are mostly direct consequences of the results of the previous section (replacing $M_\rho$ by $M_{\rho_S}$ and $\MN_\rho$ by $\MN_{\rho_S}$).

\begin{corollary}[$\MNo_\rho$ is a strongly consistent estimator of $\overline M_\rho$]\label{cor:tMsconsistent}
    Assume that one of the moment conditions of Definition~\ref{def:moment} is satisfied. Then
    \begin{equation}
        \MNo_\rho(\bm{X}^{(1)},\dots,\bm{X}^{(N)}) \xrightarrow[N\to \infty]{} \overline{M}_\rho(X_1,\dots,X_n) \text{ a.s.}
    \end{equation}
\end{corollary}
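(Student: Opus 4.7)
The plan is to reduce everything to the strongly consistent estimator result for distance multivariance, Theorem~\ref{thm:sconsistent}, applied to every subfamily, and then use that total distance multivariance is a finite sum of such terms.

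First, I would verify that the moment condition assumed on the full family $(X_1, \dots, X_n)$ transfers to each subfamily $(X_i, i \in S)$ with $|S| = m \geq 2$. In the canonical choice $p_1 = \dots = p_n = n$ of Definition~\ref{def:moment}.b), the assumption $\Ee\psi_i^n(X_i) < \infty$ immediately gives $\Ee\psi_i^m(X_i) < \infty$ for $m \leq n$ via $\psi_i^m \leq 1 + \psi_i^n$, which is the psi-moment condition with exponents $p_i = m$ summing to $1$ over $i \in S$. For general exponents $p_i$, a similar argument works by choosing $q_i \le p_i$ with $\sum_{i\in S} 1/q_i = 1$ whenever this is possible; alternatively, one invokes the hierarchy c) $\Rightarrow$ b) $\Rightarrow$ a) from Lemma~\ref{lem:moments} in the supplement and passes to a subfamily in the strongest condition.

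Second, I would apply Theorem~\ref{thm:sconsistent} to each subfamily: for every $S \subset \{1,\dots,n\}$ with $|S| \ge 2$,
\begin{equation*}
    \MN_{\rho_S}(\bm{X}^{(1)}, \dots, \bm{X}^{(N)}) \xrightarrow[N\to\infty]{} M_{\rho_S}(X_i,\, i \in S) \quad \text{a.s.}
\end{equation*}
Since the collection of such subfamilies is finite ($2^n - n - 1$ of them), the intersection of the full-measure events on which these individual convergences hold is again a full-measure event. Squaring and summing over subfamilies preserves a.s.\ convergence, so using \eqref{eq:totalms} and \eqref{def:total_multivariance},
\begin{equation*}
    \MNo_\rho^2(\bm{X}^{(1)}, \dots, \bm{X}^{(N)})
    = \sum_{|S|\ge 2} \MN_{\rho_S}^2 \xrightarrow[N\to\infty]{} \sum_{|S|\ge 2} M_{\rho_S}^2 = \overline M_\rho^2(X_1,\dots,X_n) \quad \text{a.s.}
\end{equation*}
A final application of the continuous mapping theorem to $x \mapsto \sqrt{x}$ yields the claim.

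There is no real obstacle beyond the bookkeeping of moment conditions for subfamilies; the main step (consistency per subfamily) is already done in Theorem~\ref{thm:sconsistent}, and the finiteness of the index set $\{S : |S| \ge 2\}$ is what makes passage from each summand to the sum trivial.
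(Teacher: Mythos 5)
Your approach is exactly the paper's: the published proof is the one-line remark ``Apply Theorem~\ref{thm:sconsistent} to each $M_{\rho_S}$ in \eqref{eq:totalms},'' and you carry out precisely this, adding the (correct) observation that the moment hypothesis passes to subfamilies and that the finitely many a.s.\ limits combine by a straightforward union of null sets plus continuous mapping. Your extra care about transferring the moment condition to each $S$ is a genuine refinement of what the paper leaves implicit, but it does not change the route.
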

\begin{proof}
    Apply Theorem \ref{thm:sconsistent} to each $M_{\rho_S}$ in \eqref{eq:totalms}.
\end{proof}

\begin{corollary}\label{cor:tMwconsistent}
    Let $X_1,\dots,X_n$ be independent random variables with $\Ee\psi_i(X_i)<\infty$ and \\$\Ee\left[\log^{1+\epsilon}(1+|X_i|^2) \right]<\infty$ for some $\epsilon>0$ and all $i=1,\dots, n$. Then
    \begin{equation}
        \MNo_\rho(\bm{X}^{(1)},\dots,\bm{X}^{(N)}) \xrightarrow[N\to \infty]{} 0 \quad\text{in probability}.
    \end{equation}
\end{corollary}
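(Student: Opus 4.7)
The plan is to reduce the statement to the corresponding weak consistency result for plain distance multivariance, namely Corollary~\ref{cor:wconsistent}, and then use the finite-sum representation~\eqref{eq:totalms} of the sample total distance multivariance.

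First I would observe that by~\eqref{eq:totalms},
\begin{equation*}
    \MNo_\rho^2(\bm{X}^{(1)},\dots,\bm{X}^{(N)})
    = \sum_{\substack{S\subset\{1,\dots,n\}\\ |S|\geq 2}} \MN_{\rho_S}^2(\bm{X}^{(1)},\dots,\bm{X}^{(N)}),
\end{equation*}
so it suffices to show that each summand on the right-hand side tends to zero in probability. Fix any subset $S\subset\{1,\dots,n\}$ with $|S|\geq 2$. Since the full family $X_1,\dots,X_n$ is independent by assumption, the sub-family $(X_i)_{i\in S}$ is independent as well, and the moment conditions $\Ee\psi_i(X_i)<\infty$ and $\Ee[\log^{1+\epsilon}(1+|X_i|^2)]<\infty$ clearly pass to this sub-family. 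Therefore Corollary~\ref{cor:wconsistent}, applied to $(X_i)_{i\in S}$ with the product L\'evy measure $\rho_S=\bigotimes_{i\in S}\rho_i$, yields
\begin{equation*}
    \MN_{\rho_S}(\bm{X}^{(1)},\dots,\bm{X}^{(N)})
    \xrightarrow[N\to\infty]{\Pp} 0,
\end{equation*}
and hence also $\MN_{\rho_S}^2 \to 0$ in probability by the continuous mapping theorem.

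Finally, a finite sum of (nonnegative) random variables each of which converges to $0$ in probability also converges to $0$ in probability (for instance by a union bound on the events $\{\MN_{\rho_S}^2 > \varepsilon / (2^n - n - 1)\}$). This gives $\MNo_\rho^2 \to 0$ in probability, and a second application of the continuous mapping theorem to $x\mapsto\sqrt{x}$ yields the stated conclusion $\MNo_\rho\to 0$ in probability.

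There is no real obstacle; the only point that requires a brief verification is that the hypotheses transfer to every sub-family $(X_i)_{i\in S}$, which is immediate since both independence and the marginal moment conditions are preserved under restriction to subsets of indices.
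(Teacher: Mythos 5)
Your proof is correct and follows essentially the same route as the paper: the paper's own proof is the one-line "Apply Corollary~\ref{cor:wconsistent} to each $M_{\rho_S}$ in \eqref{eq:totalms}," and you have simply spelled out the details (that independence and the moment hypotheses restrict to sub-families, and that a finite sum of terms converging to $0$ in probability converges to $0$ in probability).
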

\begin{proof}
    Apply Corollary \ref{cor:wconsistent} to each $M_{\rho_S}$ in \eqref{eq:totalms}.
\end{proof}

The next theorem is the analogue of the convergence result Theorem~\ref{thm:Mdistconv}. For each $S\subset\{1,\dots,n\}$, we denote by $\mathds{G}_S$ the centred Gaussian process
\begin{equation}
    \mathds{G}_S(t_S) := \sum_{R\subset S} (-1)^{|S|-|R|} \int\ee^{\ii \scalp{x_R}{t_R}}\,\ddB(x) \cdot \prod_{\smash{j} \in S\backslash R} f_j(t_j),
\end{equation}
cf.~\eqref{eq:conv_to_G} in the supplement \cite{part2supp}, indexed by $t_S \in \bigtimes_{i\in S} \real^{d_i}$, and where $B$ is the Brownian bridge from \eqref{eq:convBB} in the supplement \cite{part2supp}. Applying Theorem~\ref{thm:Mdistconv} with $\{1, \dots, n\}$ replaced by $S$, we see that $\mathds{G}_S$ has covariance structure
\begin{equation}\label{eq:GS_cov}
\Ee(\mathds{G}_S(t)\overline{\mathds{G}_S(t')}) = \prod_{i \in S} \left(f_{X_i}(t_i-t_i') - f_{X_i}(t_i)\overline{f_{X_i}(t_i')}\right).
\end{equation}

\begin{theorem}[Asymptotic distribution of $\MNo_\rho$]\label{thm:Mdconv}\mbox{}

    \smallskip\textup{a)}
    Suppose that $X_1,\dots,X_n$ are independent with $\Ee\psi_i(X_i)<\infty$ and $\Ee\left[\log^{1+\epsilon}(1+|X_i|^2)\right] <\infty$ for some $\epsilon>0$ and all $i=1,\dots,n$. Then
    \begin{equation}\label{eq:empMdist}
        N\cdot\MNo_\rho^2(\bm{X}^{(1)},\dots,\bm{X}^{(N)})
        \xrightarrow[N\to \infty]{d} \sum_{\substack{S\subset \{1,\dots,n\}\\ |S| \geq 2}} \|\mathds{G}_S\|_{L^2(\rho_S)}^2.
    \end{equation}

    \smallskip\textup{b)}
    Suppose that the random variables $X_1, \dots, X_n$ are not independent and that one of the moment conditions of Definition~\ref{def:moment} holds. Then
    \begin{equation}
        N\cdot\MNo_\rho^2(\bm{X}^{(1)},\dots,\bm{X}^{(N)}) \xrightarrow[N\to\infty]{} \infty\quad\text{a.s.}
    \end{equation}
\end{theorem}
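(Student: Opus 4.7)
\medskip
\noindent\textbf{Proof plan for Theorem \ref{thm:Mdconv}.}

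For part a), my starting point is the decomposition \eqref{eq:totalms}: the sample total multivariance is a sum over subsets $S \subset \{1,\ldots,n\}$ with $|S|\ge 2$ of the individual sample multivariances $\MN_{\rho_S}^2$. Since $X_1, \ldots, X_n$ are independent, any subfamily $(X_i)_{i\in S}$ is independent as well, so Theorem \ref{thm:Mdistconv}.a) applies to each summand and yields $N \cdot \MN_{\rho_S}^2 \xrightarrow{d} \|\mathds{G}_S\|_{L^2(\rho_S)}^2$ individually. The nontrivial step is upgrading this to joint convergence of all the summands. My plan is to inspect the proof of Theorem \ref{thm:Mdistconv}.a), which represents each $N \cdot \MN_{\rho_S}^2$ as a continuous functional of the empirical characteristic function process $\sqrt{N}(\hat f_{\bm{X}}^N - f_{\bm{X}})$; the common Brownian bridge $B$ appearing in the definition of $\mathds{G}_S$ already reflects this shared underlying process. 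By Donsker-type arguments (as in \cite{Csoe1985} under the log-moment condition), this empirical process converges jointly over all coordinates to a single Gaussian limit; hence all the functionals $N \cdot \MN_{\rho_S}^2$ converge jointly to $\|\mathds{G}_S\|_{L^2(\rho_S)}^2$ with the same driving Brownian bridge. The continuous mapping theorem then gives convergence of the sum, which is exactly \eqref{eq:empMdist}.

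For part b), since $X_1, \ldots, X_n$ are not independent, I choose a subset $S^* \subset \{1,\ldots,n\}$ of minimal cardinality such that $(X_i)_{i\in S^*}$ are not independent. Minimality implies that every proper subfamily of $(X_i)_{i\in S^*}$ is independent, i.e.\ the variables $(X_i)_{i\in S^*}$ are $(|S^*|-1)$-independent but not $|S^*|$-independent. Moreover, since one of the moment conditions of Definition \ref{def:moment} holds for $(X_1,\ldots,X_n)$, the analogous condition holds for any subfamily. I can therefore apply Theorem \ref{thm:Mdistconv}.b) to the vector $(X_i)_{i\in S^*}$ to conclude
\begin{equation*}
    N \cdot \MN_{\rho_{S^*}}^2(\bm{X}^{(1)},\ldots,\bm{X}^{(N)}) \xrightarrow[N\to\infty]{} \infty \quad\text{a.s.}
\end{equation*}
Since every summand in \eqref{eq:totalms} is non-negative, $\MNo_\rho^2 \ge \MN_{\rho_{S^*}}^2$, and the divergence transfers to $N\cdot\MNo_\rho^2$.

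The main obstacle I anticipate is the justification of joint convergence in part a); although it is natural from the shared Brownian bridge structure, rigorously invoking a joint functional central limit theorem for the empirical characteristic function and checking that the $L^2(\rho_S)$-norms are (almost-surely) continuous functionals of this limit requires uniform integrability/tightness arguments parallel to those in the proof of Theorem \ref{thm:Mdistconv}.a). Part b) is straightforward once the right minimal $S^*$ is chosen, relying only on the monotonicity of $\MNo_\rho^2$ in its subset decomposition and on Theorem \ref{thm:Mdistconv}.b) transplanted to the subfamily.
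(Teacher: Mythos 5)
Your approach to part a) is the same as the paper's, and you are right to be concerned about the joint convergence issue that the paper's one-line proof glosses over: the paper simply applies Theorem~\ref{thm:Mdistconv}.a) subset by subset and then adds, but this is only legitimate because, as you observe, all the $\mathds{G}_S$ are functionals of the \emph{same} Brownian bridge $B$ from the empirical characteristic function CLT (cf.~\eqref{eq:conv_to_G} in the supplement and Remark~\ref{rem:gaussian_qf} noting joint Gaussianity of the $\mathds{G}_S$). Your plan to track the single driving empirical process and apply a continuous mapping/truncation argument uniformly across all $S$ is exactly the rigorous version of what the paper leaves implicit.

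For part b), you take a genuinely different route from the paper. The paper's argument is shorter: by Corollary~\ref{cor:tMsconsistent}, $\MNo_\rho \to \overline{M}_\rho$ a.s., and $\overline{M}_\rho > 0$ by Theorem~\ref{thm:independence}.b) because $X_1,\dots,X_n$ are not independent; hence $N\cdot\MNo^2_\rho\to\infty$ a.s.\ immediately. Your argument instead extracts a minimal dependent subfamily $S^*$, notes that minimality gives $(|S^*|-1)$-independence, applies Theorem~\ref{thm:Mdistconv}.b) on that subfamily, and uses non-negativity of every summand in \eqref{eq:totalms} (guaranteed by Remark~\ref{rem:sample}.b) via Schur's theorem) to transfer divergence. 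This works, but you should be slightly more careful with the claim that the moment conditions of Definition~\ref{def:moment} automatically pass to subfamilies: for conditions b) and c) you need to adjust the exponents $p_i$ downward so they still satisfy $\sum_{i\in S^*} 1/p'_i = 1$, which is possible since $\sum_{i\in S^*}1/p_i\le 1$ and smaller exponents give weaker moment requirements; for condition a), restricting the product to $S^*$ requires dividing out the factors $\Ee\psi_i(X_{0,i}-X'_{0,i})>0$ for $i\notin S^*$, which is fine as long as those coordinates are non-degenerate (and a degenerate coordinate is independent of everything, so cannot lie in $S^*$). The paper's consistency-based argument avoids this bookkeeping entirely, which is what its approach buys; yours makes the mechanism of divergence more transparent (it isolates the ``guilty'' subfamily), at the cost of these moment-condition verifications.
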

\begin{remark}\label{rem:gaussian_qf}
Note that the processes $(\mathds{G}_S), S \subset \{1, \dots, n\}$ on the right hand side of \eqref{eq:empMdist} are \emph{jointly Gaussian}. Therefore, the limit appearing in \eqref{eq:empMdist} is a quadratic form of centred Gaussian random variables. This fact will be used in Subsection~\ref{sec:test} to construct a statistical test of (multivariate) independence. Further properties of the processes $\mathds{G}_S$ are discussed in \cite{BersBoet2018}.
\end{remark}
\begin{proof}[Proof of Theorem~\ref{thm:Mdconv}]
a) \ For any $S\subset \{1,\dots,n\}$ with $|S|\geq 2$, we know from Theorem~\ref{thm:Mdistconv} that
\begin{equation*}
    N\cdot\MN_{\rho_S}^2(\bm{X}^{(1)},\dots, \bm{X}^{(N)})
    \xrightarrow[N\to\infty]{} \|\mathds{G}_S\|_{\rho_S}^2,
\end{equation*}
and \eqref{eq:empMdist} follows.

\smallskip
b) \ By Corollary \ref{cor:tMsconsistent} we have $\MNo_\rho \to \overline{M}_\rho$ almost surely. Moreover, $\overline{M}_\rho > 0$ by Theorem \ref{thm:independence}, since the random variables $(X_1, \dots, X_n)$ are not independent. Thus, $N\cdot\MNo_\rho^2 \to \infty$ almost surely.
\end{proof}

\subsection{Normalizing and scaling distance multivariance}\label{sub:scaling}

With practical applications in mind, there are at least two reasons to consider rescaled versions of (total) distance multivariance:

\begin{itemize}
\item To obtain a \emph{distance multicorrelation} whose 
value is bounded by $1$ -- analogous to Sz\'ekely-Rizzo-and-Bakirov's distance correlation \cite[Def.~3]{SzekRizzBaki2007};
\item To normalize the asymptotic distribution of the sample (total) distance multivariance under independence, cf. Theorem~\ref{thm:Mdistconv} and Theorem~\ref{thm:Mdconv}.
\end{itemize}
We will use normalized multivariances as test statistics in two tests for independence in Section~\ref{sec:test}. For the scaling constants we use in the following the convention $0/0 := 0$. This ensures that we also cover the case of degenerated (i.e.~constant) random variables.

\subsubsection*{Distance multicorrelation}

\begin{definition}
    Let $X_1, \dots, X_n$ be random variables with
    $\Ee \psi_i^n(X_i) < \infty$ for all $i=1,\dots, n$. We set
   \begin{align*}
        &a_i
        := \norm{\Ce_{X_i} \Ce_{X_i'}\psi_i(X_i - X_i')}_{L^n(\Pp)}
    \end{align*}
    and define \emph{distance multicorrelation} as
    \begin{equation}
        \Rskript^2_\rho(X_1, \dots, X_n) := \frac{M^2_\rho(X_1, \dots, X_n)}{a_1 \cdot \ldots \cdot a_n}.
    \end{equation}
\end{definition}
For the sample version of distance multicorrelation, we define
\begin{equation}
    \hN a_i
    := \hN a_i( \bm{x}^{(1)}, \dots, \bm{x}^{(N)})
    = \bigg(\frac{1}{N^2} \sum_{k,l=1}^N |(A_i)_{kl}|^n \bigg)^{1/n},
\end{equation}
where the $A_i$ are the doubly centred matrices from Theorem~\ref{thm:sample}, and set
\begin{equation}
    \RN^2_\rho(\bm{x}^{(1)}, \dots, \bm{x}^{(N)})
    := \frac{1}{N^2} \sum_{k,l=1}^N \frac{(A_1)_{kl}}{\hN a_1} \cdot \ldots \cdot \frac{(A_n)_{kl}}{\hN a_n}.
\end{equation}

Note that $a_i = 0$ if, and only if, $X_i$ is degenerate, hence, $\Rskript_\rho(X_1, \dots, X_n)$ is well-defined as a finite non-negative number.
\begin{proposition}\label{prop:dis-mult}
\textup{a)}
    Distance multicorrelation and its sample version satisfy
    \begin{equation}\label{eq:R_bound}
        0 \leq \Rskript_\rho(X_1, \dots, X_n) \leq 1
        \quad\text{and}\quad
        0 \leq \RN_\rho (\bm{x}^{(1)}, \dots, \bm{x}^{(N)}) \leq 1.
    \end{equation}

\smallskip\textup{b)}
    For iid copies $\bm{X}^{(1)}, \dots,  \bm{X}^{(N)}$ of $\bm{X} = (X_1, \dots, X_n)$ it holds that
    $$
        \lim_{N \to \infty} \RN_\rho (\bm{X}^{(1)}, \dots, \bm{X}^{(N)})
        = \Rskript_\rho (X_1, \dots, X_n),
        \quad\text{a.s.}
    $$

\smallskip\textup{c)}
    For $n=2$ and $\psi_1(x) = \psi_2(x) = |x|$ distance multicorrelation coincides with the distance correlation of \cite{SzekRizzBaki2007}.
\end{proposition}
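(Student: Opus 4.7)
The plan is to treat the three parts separately, using generalized H\"older's inequality for the bounds in (a), a strong-law / $V$-statistic argument for (b), and a direct computation for (c).

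For part (a), the population bound is a direct application of the generalized H\"older inequality with $n$ equal exponents $p_1=\dots=p_n=n$ applied to the product representation from Corollary~\ref{cor:M_center}:
\begin{equation*}
    M_\rho^2(X_1,\dots,X_n)
    = \Ee\left(\prod_{i=1}^n -\Ce_{X_i}\Ce_{X_i'}\psi_i(X_i-X_i')\right)
    \le \prod_{i=1}^n \left\|\Ce_{X_i}\Ce_{X_i'}\psi_i(X_i-X_i')\right\|_{L^n(\Pp)}
    = a_1\cdot\ldots\cdot a_n.
\end{equation*}
Combined with $M_\rho^2\ge 0$ (it is an $L^2(\rho)$-norm squared, cf.\ \eqref{eq:M_rho_L2}), this yields $0\le \Rskript_\rho\le 1$. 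The moment condition $\Ee\psi_i^n(X_i)<\infty$ guarantees that the $a_i$ are finite; a convention $0/0:=0$ handles degenerate components. For the sample version, apply the same inequality, but now in the discrete form, viewing $\frac{1}{N^2}\sum_{j,k}$ as expectation with respect to the uniform measure on $\{1,\dots,N\}^2$. Using the representation \eqref{eq:empGC} of Theorem~\ref{thm:sample},
\begin{equation*}
    \MN_\rho^2
    = \frac{1}{N^2}\sum_{j,k=1}^N (A_1)_{jk}\cdot\ldots\cdot(A_n)_{jk}
    \le \prod_{i=1}^n \Biggl(\frac{1}{N^2}\sum_{j,k=1}^N |(A_i)_{jk}|^n\Biggr)^{1/n}
    = \hN a_1\cdot\ldots\cdot \hN a_n,
\end{equation*}
and non-negativity of $\MN_\rho^2$ (Remark~\ref{rem:sample}.b) gives the sample bound.

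For part (b), by Theorem~\ref{thm:sconsistent} we already have $\MN_\rho^2\to M_\rho^2$ almost surely under the stated moment condition (which is stronger than all three conditions of Definition~\ref{def:moment}). It therefore suffices to prove $\hN a_i \to a_i$ almost surely for each $i$. But, by the same computation leading to \eqref{eq:sample_dm}, the quantity $(\hN a_i)^n$ is a $V$-statistic: inspecting \eqref{eq:sample_interm} one sees that $(A_i)_{jk}$ is exactly the sample analogue of $-\Ce_{X_i}\Ce_{X_i'}\psi_i(X_i-X_i')$ evaluated at $(x_i^{(j)},x_i^{(k)})$, so
\begin{equation*}
    (\hN a_i)^n = \frac{1}{N^2}\sum_{j,k=1}^N \bigl|(A_i)_{jk}\bigr|^n
\end{equation*}
is a $V$-statistic of degree two whose kernel has integrable absolute value of order $n$ by the moment hypothesis $\Ee\psi_i^n(X_i)<\infty$. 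Hence the strong law of large numbers for $V$-statistics yields $(\hN a_i)^n\to a_i^n$ almost surely, and the continuous mapping theorem gives $\hN a_i\to a_i$. Dividing (with the convention $0/0:=0$ to deal with degenerate $X_i$) finishes the proof of part (b).

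For part (c), with $n=2$ and $\psi_1(x)=\psi_2(x)=|x|$, the matrices $A_i$ reduce to the classical doubly centred Euclidean distance matrices of Sz\'ekely, Rizzo and Bakirov. The numerator $\MN_\rho^2$ becomes the sample distance covariance by Remark~\ref{rem:sample}.a, and $\hN a_i$ becomes the sample distance standard deviation $\sqrt{\mathcal{V}_N(X_i)}$ of \cite[Def.~3]{SzekRizzBaki2007}; matching definitions directly gives $\RN_\rho=$ sample distance correlation, and the same identification holds at the population level. I do not expect a serious obstacle in any of the three parts; the only point requiring a bit of care is the almost-sure convergence of $\hN a_i$, where one has to verify that $|\cdot|^n$ applied to the double-centring kernel remains an integrable $V$-statistic kernel — and this follows from $\Ee\psi_i^n(X_i)<\infty$ together with Jensen's inequality applied to the conditional expectations hidden in the double centring.
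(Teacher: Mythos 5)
Your proof is correct and takes essentially the same route as the paper: generalized H\"older's inequality for the bound in (a), the strong law of large numbers for $V$-statistics (invoking Theorem~\ref{thm:sconsistent}) for (b), and direct comparison with Sz\'ekely--Rizzo for (c). You add a bit more explicit detail than the paper (in particular spelling out the sample-version H\"older inequality and the continuous-mapping step for taking $n$-th roots), but the underlying argument is identical.
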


\begin{remark}
    Sz\'ekely and Rizzo \cite[Thm~4.(iv)]{SzekRizz2009} show for the case $n=2$ (i.e.\ for distance correlation) that $\RN_\rho(\bm{X}^{(1)}, \dots, \bm{X}^{(N)}) = 1$ implies that the sample points $(x_1^{(1)}, \dots, x_1^{(N)})$ and $(x_2^{(1)}, \dots,x_2^{(N)})$ can be transformed into each other by a Euclidean isometry composed with scaling by a non-negative number. An analogous result seems not to hold for distance multicorrelation in the case $n > 2$.
\end{remark}
\begin{proof}[Proof of Proposition~\ref{prop:dis-mult}]
    By the generalized H\"older inequality for $n$-fold products (cf.~\cite[p.~133, Pr.~13.5]{schilling-mims}),
    we have that
    \begin{align*}
        M^2_\rho(X_1, \dots, X_n)
        &= \Ee\left(\prod_{i=1}^n - \Ce_{X_i} \Ce_{X'_i} \psi_i(X_i - X'_i)\right)\\
        &\leq \Ee\left(\prod_{i=1}^n \left|\Ce_{X_i} \Ce_{X'_i} \psi_i(X_i - X'_i)\right|\right)\\
        &\leq \prod_{i=1}^n \left\|\Ce_{X_i} \Ce_{X_i'}\psi_i(X_i - X_i')\right\|_{L^n(\Pp)} = a_1 \cdot \ldots \cdot a_n,
    \end{align*}
    and \eqref{eq:R_bound} follows.
    For the convergence result, note that
    \begin{equation}\label{eq:empEpsi}
        \hN a_i^n
        = \frac{1}{N^2} \sum_{k,l =1}^N \left|(A_i)_{kl} \right|^n \xrightarrow[N\to \infty]{}\Ee\left[\left|\Ce_{X_i} \Ce_{X_i'}\psi_i(X_i-X_i')\right|^n\right]
        = a_i^n
    \end{equation}
    by the law of large numbers for V-statistics, cf.~Theorem~\ref{thm:sconsistent} and its proof. Part c) follows from direct comparison with \cite{SzekRizz2009}.
\end{proof}

\subsubsection*{Normalized distance multivariance}

Alternatively, we can normalize distance multivariance in such a way, that the limiting distribution under independence (cf.~ Theorems~\ref{thm:Mdistconv} and \ref{thm:Mdconv}) has unit expectation.
\begin{definition}
    Let $X_1, \dots, X_n$ be random variables with $\Ee\psi_i(X_i) < \infty$ for all $i=1, \dots, n$, set
    \begin{equation*}
        b_i := \Ee \psi_i(X_i - X_i')
    \end{equation*}
    and define \emph{normalized distance multivariance} as
    \begin{equation}
        \Mskript^2_\rho(X_1, \dots, X_n) := \frac{M_\rho^2(X_1, \dots, X_n)}{b_1 \cdot \ldots \cdot b_n}.
    \end{equation}
\end{definition}
    For the sample version of normalized distance multicorrelation, we define
    \begin{equation}
        \hN b_i
        := \hN b_i( \bm{x}^{(1)}, \dots, \bm{x}^{(N)})
        := \frac{1}{N^2} \sum_{k,l=1}^N \psi_i\left(x_i^{(l)}  - x_i^{(k)}\right)
        = \frac{1}{N^2} \sum_{k,l=1}^N (B_i)_{kl},
    \end{equation}
and set
\begin{equation}
    \MsN_\rho^2 (\bm{x}^{(1)}, \dots, \bm{x}^{(N)})
    := \frac{1}{N^2} \sum_{k,l=1}^N \frac{(A_1)_{kl}}{\hN b_1} \cdot \ldots \cdot \frac{(A_n)_{kl}}{\hN b_n}.
\end{equation}

\begin{corollary}\label{cor:Q_conv}
    Suppose that $X_1,\dots,X_n$ are non-degenerate independent random variables with $\Ee\psi_i(X_i)<\infty$ and $\Ee\left[\log^{1+\epsilon}(1+|X_i|^2) \right]<\infty$ for some $\epsilon>0$ and all $i=1,\dots, n$. Then
    \begin{equation}\label{eq:S_conv}
        N \cdot \MsN_\rho^2(\bm{X}^{(1)},\dots,\bm{X}^{(N)}) \xrightarrow[N\to \infty]{d} Q,
    \end{equation}
    where $Q = \|\mathds{G}\|_\rho^2 / (b_1 \cdot \ldots \cdot b_n)$ and $\Ee Q = 1$.
\end{corollary}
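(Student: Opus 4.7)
The plan is to reduce everything to Theorem~\ref{thm:Mdistconv} via Slutsky's theorem. Write
$$
    N \cdot \MsN_\rho^2(\bm{X}^{(1)},\dots,\bm{X}^{(N)})
    = \frac{N \cdot \MN_\rho^2(\bm{X}^{(1)},\dots,\bm{X}^{(N)})}{\hN b_1 \cdot \ldots \cdot \hN b_n},
$$
which is immediate from the definitions of $\MsN_\rho^2$ and $\MN_\rho^2$ in Section~\ref{sec:sampleM}. The numerator converges in distribution to $\|\mathds{G}\|_{L^2(\rho)}^2$ by Theorem~\ref{thm:Mdistconv}.a), whose hypotheses are exactly the ones we are assuming. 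It therefore suffices to show $\hN b_i \to b_i$ almost surely with $b_i > 0$, and then apply Slutsky's theorem.

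For the denominator, note that $\hN b_i$ is the V-statistic of order $2$ with kernel $\psi_i(x-y)$, so the strong law of large numbers for V-statistics yields $\hN b_i \to \Ee\psi_i(X_i - X_i') = b_i$ almost surely, provided $\Ee\psi_i(X_i-X_i')<\infty$. This integrability follows from the sub-additivity estimate $\psi_i(x-y)\le 2(\psi_i(x)+\psi_i(y))$ valid for real-valued continuous negative definite functions vanishing at $0$ (a consequence of $\sqrt{\psi_i}$ being subadditive), combined with the hypothesis $\Ee\psi_i(X_i)<\infty$. Non-degeneracy of $X_i$ together with the fact that $\rho_i$ has full topological support ensures that $b_i = \Ee\psi_i(X_i - X_i') > 0$, so the ratio is well defined in the limit.

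Combining these pieces, Slutsky's theorem (with the almost-sure convergence $\hN b_1 \cdots \hN b_n \to b_1\cdots b_n > 0$ and the distributional convergence of the numerator) yields
$$
    N \cdot \MsN_\rho^2(\bm{X}^{(1)},\dots,\bm{X}^{(N)})
    \xrightarrow[N\to\infty]{d} \frac{\|\mathds{G}\|_{L^2(\rho)}^2}{b_1 \cdot \ldots \cdot b_n} = Q,
$$
which is \eqref{eq:S_conv}. Finally, the unit-mean statement follows directly from identity \eqref{eq:EG}: taking expectations through the $L^2(\rho)$-norm gives
$$
    \Ee\|\mathds{G}\|_{L^2(\rho)}^2 = \prod_{i=1}^n \Ee\psi_i(X_i - X_i') = \prod_{i=1}^n b_i,
$$
so $\Ee Q = 1$ by linearity.

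The only potentially delicate point is justifying the almost-sure convergence $\hN b_i \to b_i$ under the given moment hypothesis; the sub-additivity bound above makes this routine. No further obstacles are anticipated.
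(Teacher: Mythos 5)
Your proof is correct and follows essentially the same route as the paper: the paper's proof of Corollary~\ref{cor:Q_conv} also invokes Theorem~\ref{thm:Mdistconv} together with the V-statistic convergence $\hN b_i \to b_i = \Ee\psi_i(X_i - X_i')$ under the assumption $\Ee\psi_i(X_i) < \infty$, leaving the Slutsky step implicit. You merely make explicit the Slutsky argument, the sub-additivity bound guaranteeing $\Ee\psi_i(X_i-X_i')<\infty$, and the role of non-degeneracy in ensuring $b_i>0$, all of which are standard and correct.
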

\begin{proof}
    This follows from Theorem \ref{thm:Mdistconv} in combination with
    \begin{equation}\label{eq:empEpsi-2}
        \hN b_i = \frac{1}{N^2} \sum_{k,l =1}^N \psi_i(X_i^{(k)}-X_i^{(l)}) \xrightarrow[N\to \infty]{}\Ee\psi_i(X_i-X_i') = b_i,
    \end{equation}
    under the assumption $\Ee\psi_i(X_i)<\infty$.
\end{proof}

It remains to find an analogous normalization for \emph{total} distance multivariance. For a subset $S \subset \{1, \dots, n\}$ define $M_{\rho_S}(X_1, \dots, X_n)$ as in Section~\ref{sec:estimate_total} and set $b_S = \prod_{i \in S} b_i$.

\begin{definition}
For the random variables $X_1, \dots, X_n$ we define the \emph{normalized total distance multivariance} as
\begin{equation}\label{eq:norm_total_DM}
    \overline{\Mskript}^2_\rho(X_1, \dots, X_n)^2
    := \frac{1}{2^n - 1 - n}\sum_{\substack{S\subset \{1,\dots,n\}\\ |S| \geq 2}}\frac{ M_{\rho_S}^2(X_i, i\in S)}{b_S}.
\end{equation}
Its sample version becomes
\begin{small}
\begin{align}
    &\hN\overline{\Mskript}^2_{\rho} (\bm{x}^{(1)}, \dots, \bm{x}^{(N)})\\
    \notag &\quad:= \frac{1}{2^n - 1 - n} \left\{\frac{1}{N^2} \sum_{k,l=1}^N \left(1 + \frac{(A_1)_{kl}}{\hN b_1}\right) \cdot \ldots \cdot \left(1 + \frac{(A_n)_{kl}}{\hN b_1}\right) - 1\right\}.
\end{align}
\end{small}
\end{definition}

Similar to Corollary~\ref{cor:Q_conv}, we have the following result.
\begin{corollary}\label{cor:Q_conv_total}
    Suppose that $X_1,\dots,X_n$ are non-degenerate independent random variables with $\Ee\psi_i(X_i)<\infty$ and $\Ee\left[\log^{1+\epsilon}(1+|X_i|^2) \right]<\infty$ for some $\epsilon>0$ and all $i=1,\dots, n$. Then
    \begin{equation}\label{eq:S_conv_total}
        N \cdot \hN\overline{\Mskript}^2_\rho(\bm{X}^{(1)},\dots,\bm{X}^{(N)}) \xrightarrow[N\to \infty]{d} \overline{Q},
    \end{equation}
    where
    $$
        \overline{Q} = \frac{1}{2^n - n - 1} \sum_{\substack{S\subset \{1,\dots,n\}\\ |S| \geq 2}} \frac{\|\mathds{G}_S\|_{L^2(\rho_S)}^2}{b_S}
        \quad\text{and}\quad
        \Ee \overline{Q} = 1.
    $$
\end{corollary}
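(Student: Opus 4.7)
The plan is to reduce Corollary~\ref{cor:Q_conv_total} to Theorem~\ref{thm:Mdconv}.a) combined with Slutsky's theorem, after first decomposing the normalized sample total multivariance into subset-multivariances. Expanding the product in the definition of $\hN\overline{\Mskript}^2_\rho$ as a sum over subsets $S \subset \{1, \dots, n\}$, and using Remark~\ref{rem:sample}.e) (each $A_i$ has vanishing row- and column-sums) to eliminate the $|S| = 0$ and $|S| = 1$ contributions, I would obtain
\[
    N \cdot \hN\overline{\Mskript}^2_\rho(\bm{X}^{(1)}, \dots, \bm{X}^{(N)})
    = \frac{1}{2^n - 1 - n} \sum_{\substack{S\subset \{1,\dots,n\}\\ |S|\geq 2}} \frac{N \cdot \MN_{\rho_S}^2(\bm{X}^{(1)}, \dots, \bm{X}^{(N)})}{\prod_{i \in S} \hN b_i}.
\]
This is the key structural identity; its derivation mirrors the expansion argument used in the proof of Corollary~\ref{cor:M_center}.

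With this decomposition in hand, I would invoke Theorem~\ref{thm:Mdconv}.a), which already supplies the \emph{joint} weak convergence of the family $(N \cdot \MN_{\rho_S}^2)_{|S|\geq 2}$ to the family of limits $(\|\mathds{G}_S\|_{L^2(\rho_S)}^2)_{|S|\geq 2}$ built from the jointly Gaussian processes $(\mathds{G}_S)$, cf.~Remark~\ref{rem:gaussian_qf}. Non-degeneracy of each $X_i$ gives $b_i > 0$, and the strong law of large numbers for V-statistics, as used in \eqref{eq:empEpsi-2}, guarantees $\hN b_i \to b_i$ almost surely. Slutsky's theorem then combines these two ingredients and yields $N \cdot \hN\overline{\Mskript}^2_\rho \xrightarrow{d} \overline{Q}$.

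The computation of $\Ee \overline Q$ is a direct application of~\eqref{eq:EG} with $\{1, \dots, n\}$ replaced by the subset $S$, yielding
\[
    \Ee \|\mathds{G}_S\|_{L^2(\rho_S)}^2 = \prod_{i \in S} \Ee \psi_i(X_i - X_i') = \prod_{i \in S} b_i = b_S.
\]
Each summand in the definition of $\overline Q$ therefore contributes $1$, and since there are exactly $2^n - n - 1$ such summands one gets $\Ee \overline Q = 1$. The only potentially delicate point, the joint convergence of the subset-statistics, is already absorbed into Theorem~\ref{thm:Mdconv}.a) and requires no fresh argument; the remaining work is purely a combinatorial bookkeeping exercise.
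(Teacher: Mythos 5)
Your argument is correct and follows the same route the paper takes, namely reduction to Theorem~\ref{thm:Mdconv} and the expectation computation $\Ee\|\mathds{G}_S\|_{L^2(\rho_S)}^2 = b_S$; the paper's own proof is a one-liner that leaves the subset decomposition, the a.s.\ convergence $\hN b_i \to b_i$, and the Slutsky step implicit, whereas you spell them out. One small caveat: Theorem~\ref{thm:Mdconv}.a) as \emph{stated} only gives convergence of the sum $N\cdot\MNo^2_\rho$, not of the vector $(N\cdot\MN^2_{\rho_S})_{|S|\geq 2}$, so strictly speaking the joint weak convergence you invoke is extracted from the \emph{proof} of that theorem (all $\mathds{G}_S$ being functionals of the same Brownian bridge $B$, cf.~\eqref{eq:conv_to_G} and Remark~\ref{rem:gaussian_qf}) rather than from its statement --- but this is exactly what the paper's terse proof also relies on.
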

\begin{proof}
    Convergence follows from Theorem~\ref{thm:Mdconv}. Note that the sum runs over $2^n - n - 1$ subsets and $\Ee\left[\|\mathds{G}_S\|_{L^2(\rho_S)}^2\right] = b_S$ by Corollary~\ref{cor:Q_conv}.
\end{proof}

\subsection{Two tests for independence}\label{sec:test}
Based on the normalized multivariance statistics $\Mskript_\rho$ and $\overline{\Mskript}_\rho$ and the convergence results of Corollaries~\ref{cor:Q_conv} and \ref{cor:Q_conv_total}, we can formulate two statistical tests for the independence of the random variables $X_1,\dots, X_n$. To assess a critical value for the test statistics, we use the same approach as Sz\'ekely and Rizzo \cite{SzekRizz2009}: Both limiting random variables $Q$ and $\overline{Q}$ are quadratic forms of centred Gaussian random variables, normalized to $\Ee Q = \Ee\overline{Q} = 1$. Hence, by \cite[p.~181]{SzekBaki2003},
\begin{equation} \label{eq:Qalpha}
    \Pp(Q \ge \chi_{1-\alpha}^2(1)) \le \alpha
    \quad\text{and}\quad
    \Pp(\overline{Q} \ge \chi_{1-\alpha}^2(1)) \le \alpha,
\end{equation}
for all $0 < \alpha \le 0.215$, where $\chi_{1-\alpha}^2(1)$ denotes the $(1- \alpha)$-quantile of a chi-square distribution with one degree of freedom. Note that \eqref{eq:Qalpha} is, in general, very rough, thus the following tests are, in general, quite conservative.  The first test uses multivariance and, therefore, requires the a-priori assumption of $(n-1)$-independence.

\begin{test} \label{testA}
    Let $\bm{x}^{(1)}, \dots, \bm{x}^{(N)}$ be observations of the random vector $\bm{X} = (X_1, \dots, X_n)$, let $\alpha \in (0,0.215)$, and suppose that  the moment conditions of Corollary~\ref{cor:Q_conv} and one of the moment conditions of Definition~\ref{def:moment} hold.  Under the assumption of $(n-1)$-independence, the null hypothesis
    \begin{align*}
        \bm{H_0}:&\qquad (X_1, \dots, X_n) \quad\text{are independent}
    \intertext{is rejected against the alternative hypothesis}
        \bm{H_1}:&\qquad (X_1, \dots, X_n) \quad\text{are not independent}
    \end{align*}
    at level $\alpha$, if the normalized multivariance $\MsN(\bm{x}^{(1)}, \dots, \bm{x}^{(N)})$ satisfies
    $$
        N \cdot \MsN^2_\rho(\bm{x}^{(1)}, \dots, \bm{x}^{(N)})
        \geq \chi^2_{1 - \alpha}(1).
    $$
\end{test}
The second test uses \emph{total} multivariance, and hence does not require a-priori assumptions, except for the moment conditions. We emphasize that this test on mutual independence can be applied in very general settings: It is distribution-free and the random variables $X_1, \dots, X_n$ can take values in arbitrary dimensions.

\begin{test} \label{testB}
    Let $(\bm{x}^{(1)}, \dots, \bm{x}^{(N)})$ be observations of the random vector $\bm{X} = (X_1, \dots, X_n)$, let $\alpha \in (0,0.215)$, and suppose that  the moment conditions of Corollary~\ref{cor:Q_conv_total} and one of the moment conditions of Definition~\ref{def:moment} hold.  The null hypothesis
    \begin{align*}
        \bm{H_0}:&\qquad (X_1, \dots, X_n) \quad\text{are independent}
    \intertext{is rejected against the alternative hypothesis}
        \bm{H_1}:&\qquad (X_1, \dots, X_n) \quad\text{are not independent}
    \end{align*}
    at level $\alpha$, if the normalized total multivariance $\hN\overline{\Mskript}_\rho(\bm{x}^{(1)}, \dots, \bm{x}^{(N)})$ satisfies
    $$
        N \cdot \hN\overline{\Mskript}^2_\rho(\bm{x}^{(1)}, \dots, \bm{x}^{(N)}) \ge \chi^2_{1 - \alpha}(1).
    $$
\end{test}
Note that in Test \ref{testA} and Test \ref{testB} the moment conditions of Definition~\ref{def:moment} ensure the divergence (for $N \to \infty$) of the test statistics in the case of dependence, cf.~Theorem \ref{thm:Mdistconv} and Theorem \ref{thm:Mdconv}. Thus these tests are consistent against all alternatives.

In Section~\ref{ex:bernstein} below we give a numerical example of both tests that also allows to assess their power for different sample sizes $N$.

\begin{remark}
    If the marginal distributions are known, it is possible to perform a Monte Carlo test, where the $p$-value is obtained from the empirical (Monte Carlo) distribution of the test statistic under $H_0$. Even without knowledge of the marginal distributions, resampling tests can be performed. These and further tests based on distance multivariance are discussed in \cite{Boet2017,BersBoet2018}.
\end{remark}

\section{Examples}\label{ex:bernstein}

In this section we present two basic examples which illustrate some key aspects of distance multivariance:

\begin{itemize}
\item \textbf{Bernstein's coins:} This is a classical example of pairwise independence with higher order dependence. It shows that distance multivariance accurately detects multivariate dependence.
\item \textbf{Sinusoidal dependence:} This is a basic example which was considered in \cite{SejdSripGretFuku2013} to illustrate that distance covariance can perform poorly when used to detect small scale (local) dependencies. We show that the flexibility of generalized distance multivariance  -- due to the choice of the distance functions $\psi_i$ -- can be used to improve the power of the test considerably.
\end{itemize}

\subsection{Bernstein's coins}
The first example of pairwise independent, but not (totally) independent random variables is attributed to S.N.~Bernstein, cf.~\cite[Sec.~V.3]{feller1971introduction}. We illustrate this example by using two identical fair coins, coin I and coin II. Based on independent tosses of these two coins, define the following events
\begin{gather*}
    A = \{\text{coin I shows heads}\}, \qquad
    B = \{\text{coin II shows tails}\}, \\
    C = \{\text{both coins show the same side}\}.
\end{gather*}
All events have probability $\frac{1}{2}$, and they are pairwise independent, since
$$
    \Pp(A \cap B) = \Pp(B \cap C) = \Pp(C \cap A) = \frac{1}{4}.
$$
They are, however, not independent, since $A \cap B \cap C = \emptyset$, hence
$$
    0 = \Pp(A \cap B \cap C) \neq \Pp(A)\cdot\Pp(B)\cdot\Pp(C) = \frac{1}{8}.
$$

Hence, the distance covariances\footnote{In slight abuse of notation, we identify the events $A,B,C$ with the random variables $\One_A(\omega), \One_B(\omega), \One_C(\omega)$.} of the pairs $(A,B)$, $(B,C)$ and $(C,A)$ should vanish, due to pairwise independence, while the distance multivariance and the total distance multivariance of the triplet $(A,B,C)$ should detect their higher-order dependence. We discuss both the analytic approach and the numerical simulation of the relevant quantities.

Let $\rho_A, \rho_B, \rho_C$ be one-dimensional symmetric L\'evy measures with the corresponding continuous negative definite functions $\psi_A$, $\psi_B$ and $\psi_C$. We write  $\rho = \rho_A\otimes\rho_B\otimes\rho_C$ and $\rho_{AB}:=\rho_A\otimes\rho_B$ etc.

\subsubsection*{Analytic Approach}
First, note that pairwise independence yields
\begin{align*}
    M_{\rho_{AB}}(A,B)^2
    &= \int_{\real^2} \left(f_{A,B}(r,s)  - f_A(r)f_B(s)\right)^2 \rho_A\otimes\rho_B(\ddr,\dds) \\
    &= \int_{\real}\int_{\real} 0\,\rho_A(\ddr)\,\rho_B(\dds) = 0,
\end{align*}
and similarly for $M_{\rho_{BC}}(B,C)$ and $M_{\rho_{AC}}(C,A)$. On the other hand, from the pairwise independence and Corollary~\ref{cor:mInd-charfun} we obtain
\begin{align*}
    M_\rho&(A,B,C)^2
    = \int_{\real^3} \left(f_{A,B,C}(r,s,t)  - f_A(r)f_B(s)f_C(t)\right)^2 \rho(\ddr,\dds,\ddt)\\
    &= \frac{1}{64} \int_\real \int_\real \int_\real |1 -\ee^{\ii r} |^2 |1 -\ee^{\ii s} |^2 |1 -\ee^{\ii t} |^2 \,\rho_A(\ddr) \,\rho_B(\dds) \,\rho_C(\ddt)\\
    &= \frac{1}{8} \psi_A(1) \psi_B(1) \psi_C(1).
\end{align*}

\begin{figure}[htbp]
	\centering
	    \subfigure[Multivariance without normalization]{\includegraphics[width=0.8\textwidth]{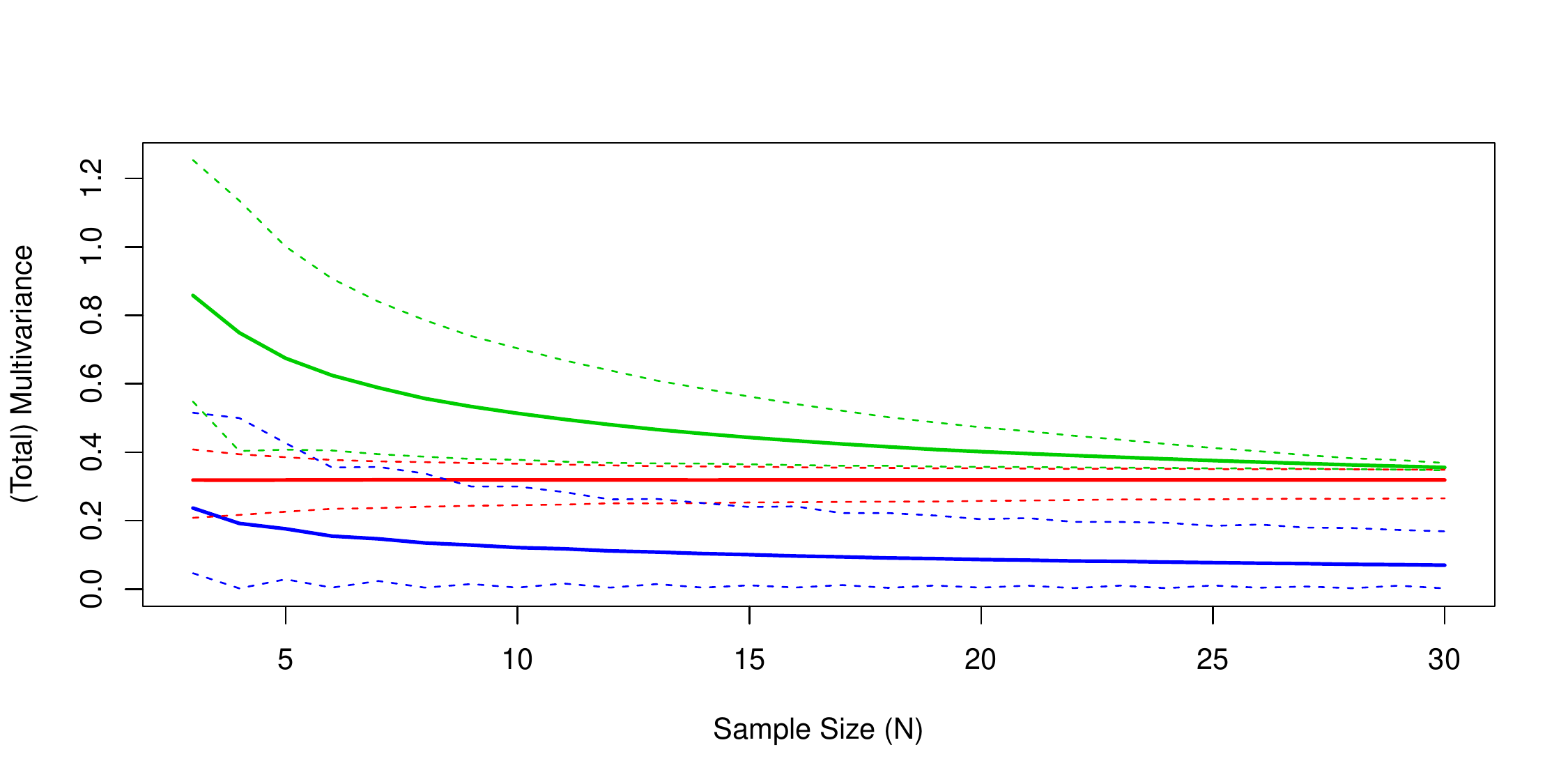}}
        \subfigure[Normalized multivariance]{\includegraphics[width=0.8\textwidth]{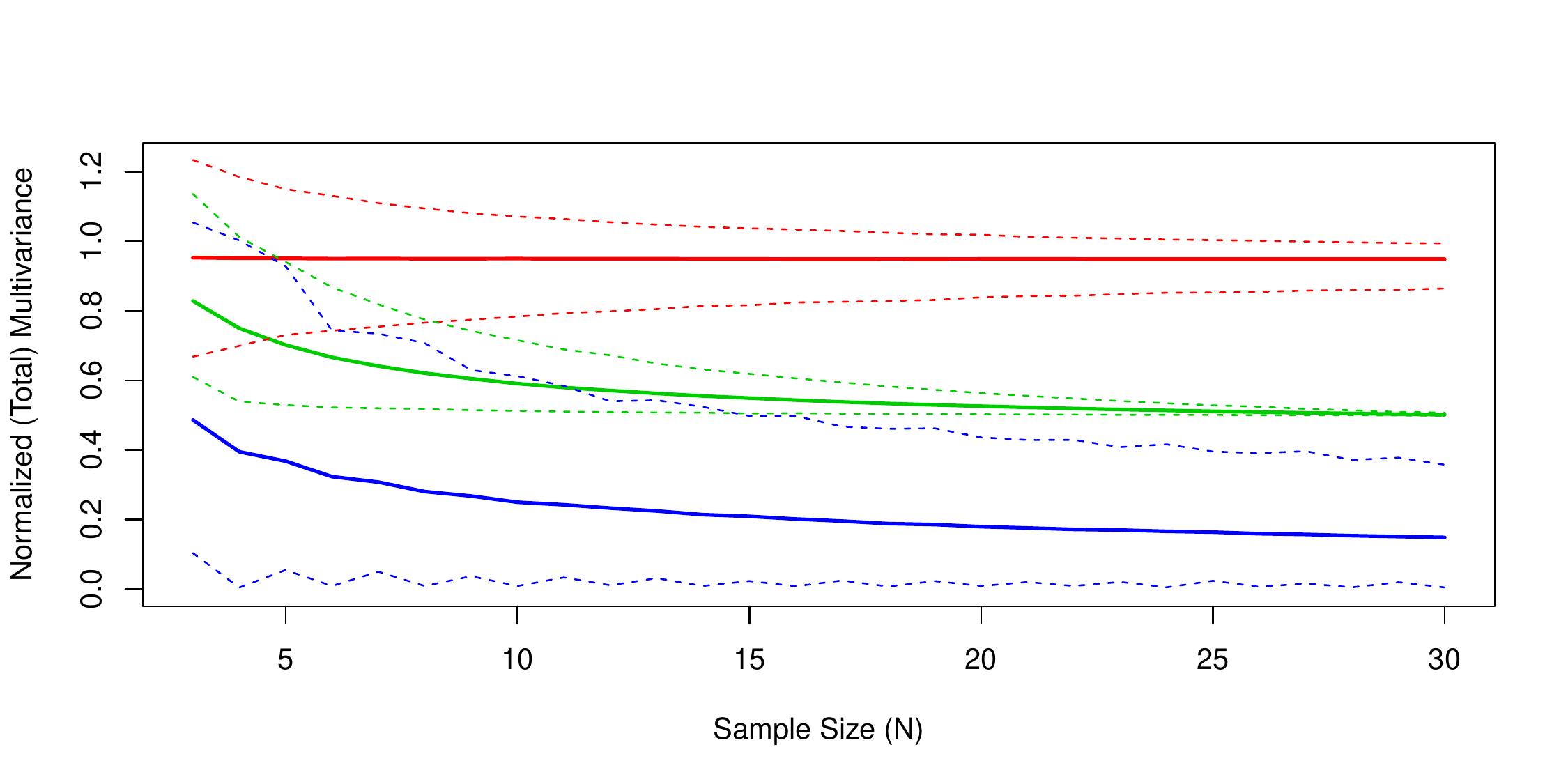}}
        \subfigure[Squared normalized multivariance scaled by sample size]{\includegraphics[width=0.8\textwidth]{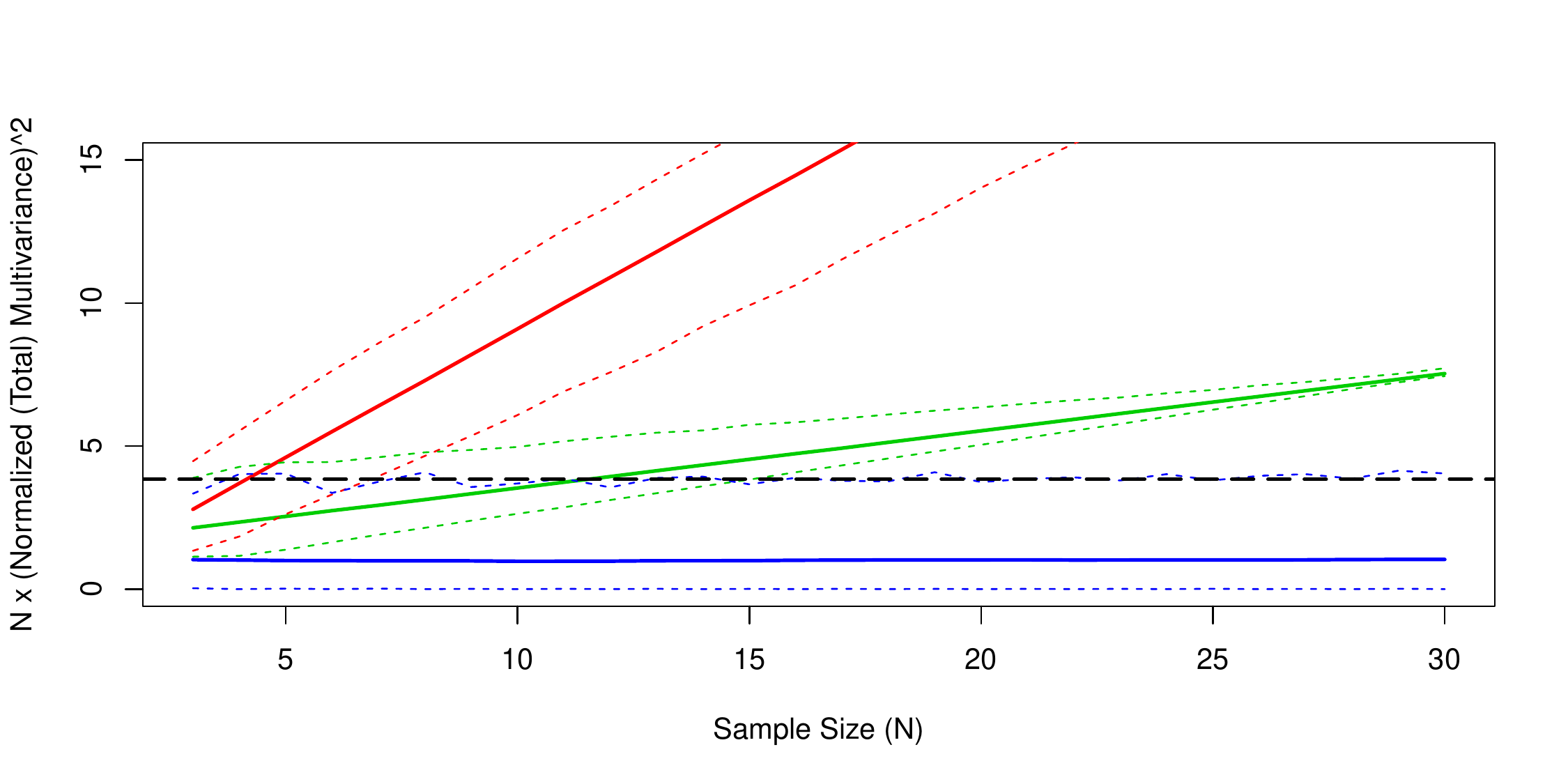}}
    \caption{These plots show sample distance covariance $\MN_{\rho_{AB}}(A,B)$ \textup{(}blue\textup{)}, sample distance multivariance $\MN_\rho(A,B,C)$ \textup{(}red\textup{)} and sample total distance multivariance $\MNo_\rho(A,B,C)$ \textup{(}green\textup{)} for Bernstein's coin toss experiment, cf.~Section~\ref{ex:bernstein}, averaged over $5000$ Monte-Carlo replications. Also shown are the empirical $5\%$ and $95\%$ quantiles \textup{(}dashed\textup{)}. Different scalings are used in the plots \textup{(a)--(c)}, and plot \textup{(c)} also shows the critical value \textup{(}significance level $\alpha=5\%$\textup{)} of the independence tests from Section~\ref{sec:test} \textup{(}long dashes, black\textup{)}.}
	\label{fig:bernstein}
\end{figure}

In particular, for $\psi(x) = |x|$ we obtain
$$
    M_\rho(A,B,C) = \overline{M}_\rho(A,B,C) = \frac{1}{2\sqrt{2}}.
$$
We calculate the scaling factors from Section~\ref{sub:scaling} as
$$
    a_A = a_B = a_C = b_A = b_B = b_C = \frac{1}{2},
$$
which shows that multicorrelation and normalized multivariance coincide in this case, i.e.
$$
    \Rskript_\rho(A,B,C)  = 1 = \Mskript_\rho(A,B,C).
$$
Finally, normalized total multivariance is given by
$$
    \overline{\Mskript}_\rho(A,B,C)  =  \frac{1}{\sqrt{2^3 - 3 - 1}} \,\Mskript_\rho(A,B,C) = \frac{1}{2}.
$$

\subsubsection*{Numerical Simulation}
To complement the analytical results by a numerical simulation, we have simulated $5000$ replications of $N=3,\dots,30$ tosses of Bernstein's coins. We calculated the pairwise sample distance covariances $\MN_{\rho_{AB}}(A,B)$, $\MN_{\rho_{BC}}(B,C)$, $\MN_{\rho_{AC}}(C,A)$ as well as the sample distance multivariance $\MN_\rho(A,B,C)$ and the sample total distance multivariance $\MNo_\rho(A,B,C)$. We used Euclidean distance as underlying distance in all cases. Due to pairwise independence, the bivariate distance covariances should tend to zero for increasing $N$, while the multivariances should tend to the non-zero limits that we calculated analytically above.

Figure~\ref{fig:bernstein} shows the average values of the multivariance statistics over $5000$ replications, along with their empirical $5\%$ and $95\%$ quantiles. Figure (a) uses no scaling, Figure (b) shows `normalized' quantities (cf.~Section~\ref{sub:scaling}) and Figure (c) shows squared normalized quantities scaled by $N$, as they appear in  Theorems~\ref{thm:Mdistconv} and~\ref{thm:Mdconv}. Also shown is the critical value $\chi^2_{0.95}(1)$ of the test proposed in Section~\ref{sec:test}. In summary, the numerical simulation shows that\\
$\bullet$   (Total) distance multivariance is able to distinguish correctly pairwise independence of the events $A,B,C$ from their higher-order dependence;\\
$\bullet$    The sample statistics converge quickly to their analytic limits and numerically confirm the asymptotic results from Theorems~\ref{thm:Mdistconv} and~\ref{thm:Mdconv}.\\
$\bullet$   The hypothesis of pairwise independence of $A$ and $B$ would be correctly accepted in about $95\%$ of simulations, confirming the specificity of the proposed tests.\\
$\bullet$   Test A (with the a-priori assumption of pairwise independence) has a power exceeding $95\%$ for sample sizes $N>5$. Test B (no a priori assumptions) has a power exceeding $95\%$ for $N > 14$.

Note that all necessary functions and tests for such simulations and for the use of distance multivariance in applications are provided in the R package \texttt{multivariance} \cite{Boett2017R-1.0.5}.

\subsection{Sinusoidal dependence}
In \cite[p. 2287]{SejdSripGretFuku2013} it was pointed out that for random variables $X$, $Y$ with a common \emph{sinusoidal density}
\begin{equation} \label{eq:sinusoidal}
    f_l(x,y) := \frac{1}{4 \pi^2}(1+ \sin(lx)\sin(ly)) \text{ on } [-\pi,\pi]^2 \quad\text{for some $l\in \nat$}
\end{equation}
the detection of the dependence using distance covariance is poor for $l>1$. It was also noted that choosing (in our notation) $\psi_i(x) = |x|^\alpha$ with some $\alpha \neq 1$ might improve the power, see Figure~\ref{fig:sinusoidal}.(a).  Using the bounded continuous negative definite function $\psi_i(x) = \frac{1}{\gamma}  (1 - \exp(-\gamma|x|))$ with $\gamma > 0$ can increase the power considerably for larger $l$, see Figure \ref{fig:sinusoidal}.(b). Here we used the same sample parameters as in \cite{BerrSamw2017} (5000 samples, $N = 200$, $\alpha = 0.05$). The $p$-values were calculated by Monte Carlo estimation with 10000 replications.

The following heuristic was used to choose the value of $\gamma$: Note that
\begin{equation}
    \psi_i(x) := \frac{1}{\gamma}  (1 - \exp(-\gamma|x|))
\end{equation}
is a bounded function which is strictly increasing for $x> 0$. Suppose we know that the local dependencies occur in a window of (Euclidean) distance $\delta$. Thus, it seems reasonable to \emph{neglect} all pairs which are further apart than $\delta$ by setting all their $\psi_i$-distances to (roughly) the same value, i.e.~we choose $\gamma$ such that $\psi_i(\delta) \geq 0.99 \cdot \sup_x \psi_i(x).$ This is achieved by setting $\gamma := - \ln(0.01)/\delta$. For the sinusoidal example $\delta$ is the period of the sin functions, i.e. $\delta=\pi/l$. Let us compare the resulting test with the methods \texttt{MINT} and \texttt{MINTav} which were proposed in [BS17] for a wide range of situations. Figure \ref{fig:multiMINT} shows in the setting of sinusoidal data that our proposed test outperforms \texttt{MINTav} and has similar power as the \textit{oracle test} \texttt{MINT}.
Note that \texttt{MINTav} uses no a priori information about the dependence scale, and that \texttt{MINT} computes the p-value using all possible parameters and selects a posteriori the parameter (for each setting) which yielded the highest power. In contrast, our test requires a heuristic parameter selection using certain a priori knowledge of the data generation mechanism. 

Further extensions and details on resampling, Monte Carlo and other tests based on distance multivariance can be found in \cite{Boet2017, BersBoet2018}.

\begin{figure}[htbp]
	\centering
	    \subfigure[$\psi_i(x)=|x|^\alpha$]{\includegraphics[width=0.45\textwidth]{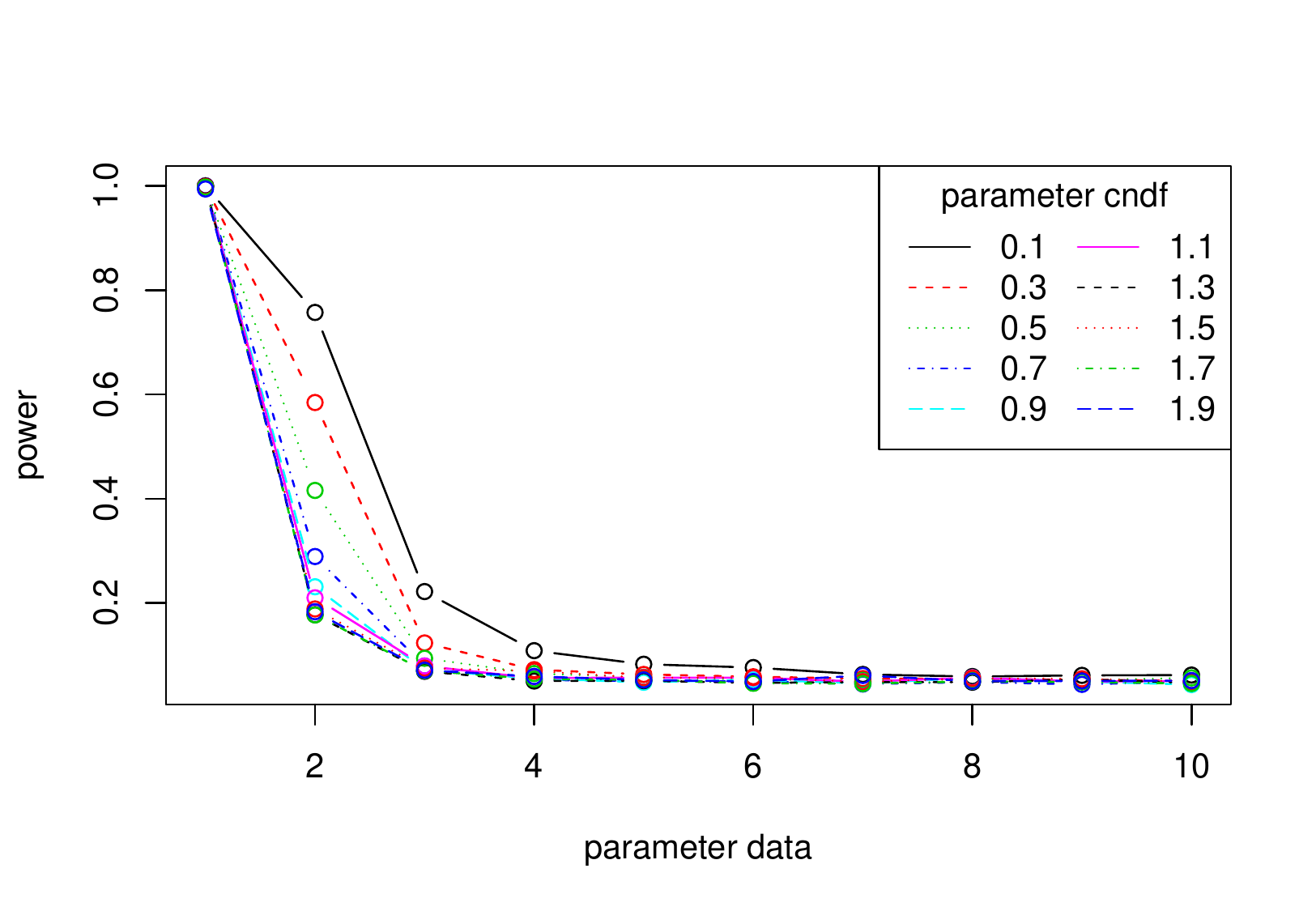}}
        \subfigure[$\psi_i(x) = \frac{1}{\gamma}  (1 - \exp(-\gamma|x|))$]{\includegraphics[width=0.45\textwidth]{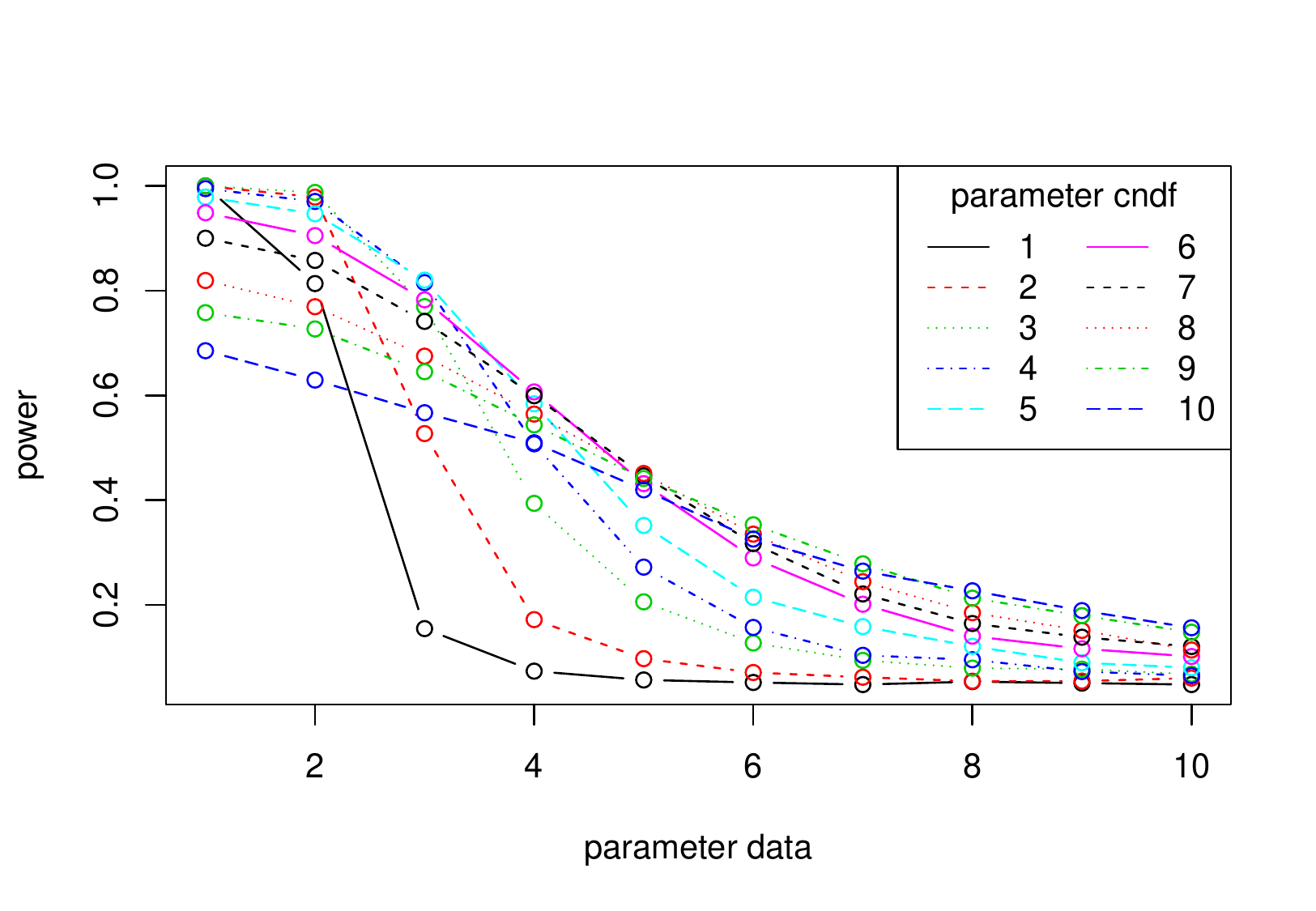}}
    \caption{Power of tests based on distance multivariance for the sinusoidal example with density $f_l$ given in \eqref{eq:sinusoidal}. The parameter of the data is $l$ and the parameter of the $\psi_i$ is $\alpha$ and $\gamma$, respectively. Here (a) is the alpha-stable case and (b) uses a bounded cndf.}
	\label{fig:sinusoidal}
\end{figure}

\begin{figure}[htbp]
\centering
\includegraphics[width=0.8\textwidth]{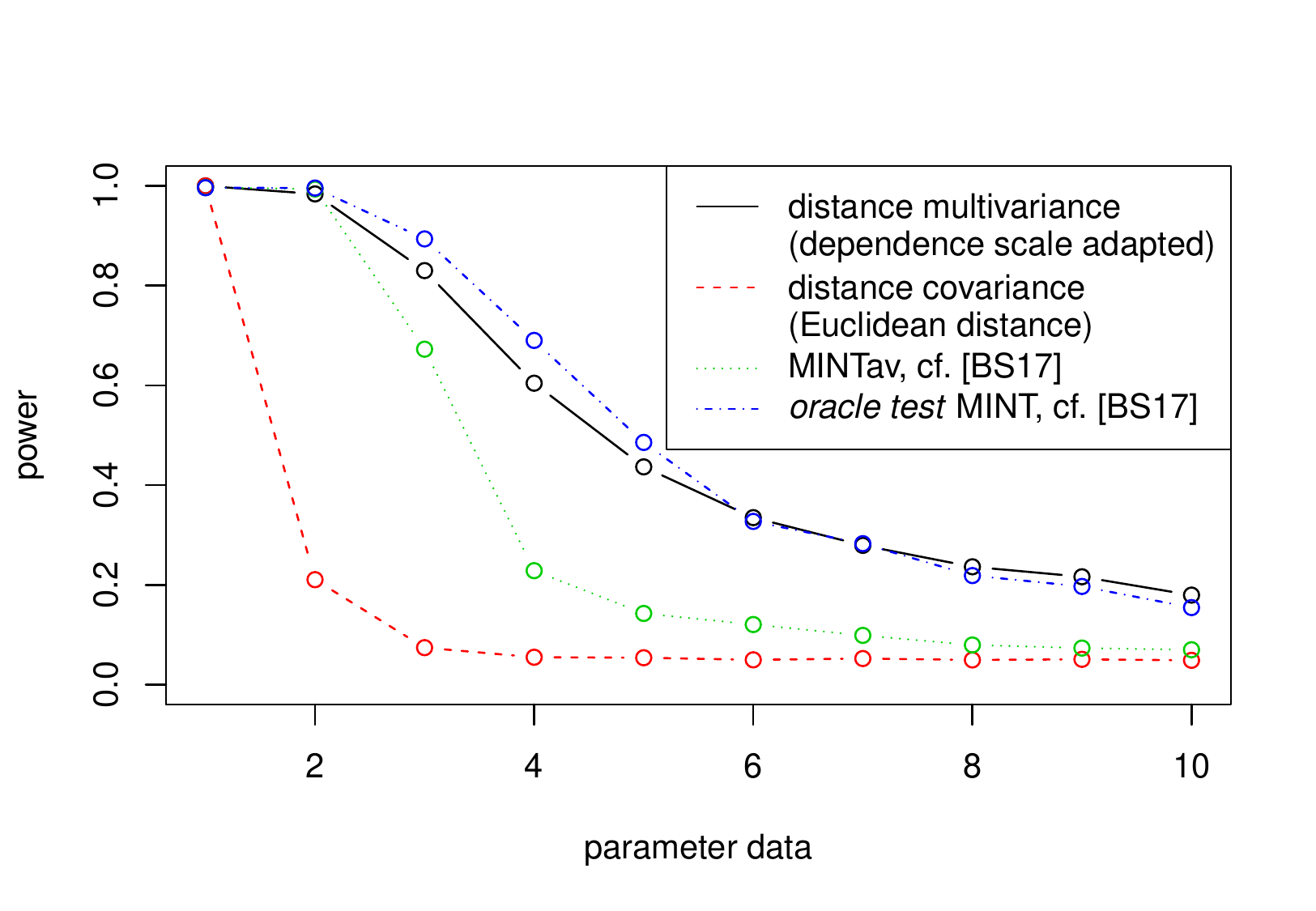}
\caption{Comparison of the power of distance multivariance with distance adapted to the dependence scale, classical distance covariance with Euclidean distance, \textup{\texttt{MINTav}} and \textup{\texttt{MINT}}. The latter were recently introduced in \textup{\cite{BerrSamw2017}} and it was shown that for this example they outperform many (all in their comparison) other dependence measures.}
\label{fig:multiMINT}
\end{figure}

\section*{Acknowledgments}

We are grateful to Ulrich Brehm (TU Dresden), for insightful discussions on (elementary) symmetric polynomials and to Georg Berschneider (TU Dresden) who read and commented on the whole text. We would also like to thank Gabor J.\ Sz\'ekely (NSF) for advice on the current literature. We thank the anonymous referees and the handling editor for their helpful comments.

Martin Keller-Ressel acknowledges support by the German Research Foundation (DFG) under grant ZUK~64 and KE~1736/1-1.

\setcounter{part}{18}

\part{Supplement to ``Distance multivariance: New dependence measures for random vectors''}

\renewcommand{\thesection}{\Alph{part}}
\section{Proofs and auxiliary results}

Here we collect supplementary material to \cite{part2}. It contains the proofs of some of the main results as well as a few additional statements: Lemma \ref{lem:moments} discusses the moment conditions introduced in Definition \ref{def:moment} and Lemma \ref{lem:ZN} analyses the estimator which is required for the proof of the main convergence result (Theorem~\ref{thm:Mdistconv}).

Unless otherwise mentioned, all numbered references refer to \cite{part2}.

\subsection{Proofs and auxiliary results for Section~\ref{sec:prelim}}

\begin{proof}[Proof of Lemma~\ref{lem:factorization}]
    For arbitrary $a_i,b_i\in\comp$, $i=1,\dots, n$, we have
    \begin{equation}\label{eq:ab_identity}
        \prod_{i=1}^n(a_i-b_i) = \sum_{S\subset \{1,\dots,n\}} \left(\prod_{i \in S} a_i\right) \left(\prod_{i \in S^c} b_i\right) (-1)^{|S^c|},
    \end{equation}
    where $|S|$ denotes the cardinality of $S$ and $S^c:= \{1,\dots,n\}\setminus S$. Thus,
    \begin{align*}
        &\Ee\left(\prod_{i=1}^n (Z_i - \Ee Z_i)\right)
        = \Ee\left[\sum_{S\subset \{1,\dots,n\}} \left(\prod_{i \in S} Z_i\right) \left(\prod_{i \in S^c} \Ee(Z_i)\right) (-1)^{|S^c|}\right]\\
        &= \Ee\left(\prod_{i=1}^n Z_i\right) + \sum_{\substack{S\subset \{1,\dots,n\}\\|S|\leq n-1}}\Ee\left( \prod_{i \in S}  Z_i\right)  \left(\prod_{i \in S^c} \Ee\left( Z_i\right)\right) (-1)^{|S^c|}\\
        &= \Ee\left(\prod_{i=1}^n Z_i\right) + \sum_{\substack{S\subset \{1,\dots,n\}\\|S|\leq n-1}} \left(\prod_{i=1}^n \Ee\left( Z_i\right)\right) (-1)^{|S^c|}\\
        &= \Ee\left(\prod_{i=1}^n Z_i\right) - \prod_{i =1}^n\Ee( Z_i);
    \end{align*}
    $(n-1)$-independence is used in the penultimate line.
\end{proof}

\begin{lemma}\label{lem:moments}
    The moment conditions in Definition~\ref{def:moment} are ordered from weak to strong, i.e.~c) implies b) and b) implies a). In particular, the estimate
    \begin{equation}\label{eq:prod_est}
  \Ee\left( \prod_{i=1}^n \psi_i(X_{k_i,i}-X'_{l_i,i})\right) \leq 4^n \prod_{i=1}^n \left(\Ee \psi_i^{p_i}(X_{i})\right)^{1/p_i}
\end{equation}
holds for all $k_i, l_i \in \{0,1\}, i=1, \dots, n$ and all $p_i \in [1,\infty)$ with $\sum_{i=1}^n p_i^{-1} = 1$.
\end{lemma}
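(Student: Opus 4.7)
The plan has three parts, matching the three assertions. First, for the implication c)~$\Rightarrow$~b), I would use the standard growth bound for continuous negative definite functions: splitting the integral \eqref{eq:cndf} over $\{|t_i|\le 1\}$ and $\{|t_i|>1\}$ and applying $1-\cos(y_it_i)\le \tfrac12|y_i|^2|t_i|^2$ on the first region and $1-\cos(y_it_i)\le 2$ on the second, the L\'evy integrability \eqref{eq:levy_integrability} produces a constant $C_i$ with $\psi_i(y_i)\le C_i(1+|y_i|^2)$. Raising to the power $p_i$ and using $(1+r)^{p_i}\le 2^{p_i-1}(1+r^{p_i})$ then gives $\psi_i^{p_i}(X_i)\le C_i^{p_i}2^{p_i-1}(1+|X_i|^{2p_i})$, so that c) implies b).

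Second, for b)~$\Rightarrow$~a) together with the quantitative bound \eqref{eq:prod_est}, the main tool is the sub-additivity inequality $\psi_i(u-v)\le 2\bigl(\psi_i(u)+\psi_i(v)\bigr)$, valid for any real-valued continuous negative definite function vanishing at $0$ (here I would cite \cite[Sec.~3.2]{Jaco2001} or \cite[Ch.~II]{BerFor}; it follows, via symmetry of $\rho_i$, from the Cauchy--Schwarz-type estimate applied to the associated kernel in \eqref{eq:psi_to_kernel}). Applying this factor by factor yields
\begin{equation*}
    \prod_{i=1}^n \psi_i(X_{k_i,i}-X'_{l_i,i})
    \le 2^n\prod_{i=1}^n \bigl(\psi_i(X_{k_i,i})+\psi_i(X'_{l_i,i})\bigr).
\end{equation*}
Expanding the right-hand product as a sum of $2^n$ terms, each of the form $\prod_{i=1}^n \psi_i(Y_i)$ where $Y_i\stackrel{d}{=}X_i$, and applying the generalized H\"older inequality with the exponents $p_i$ satisfying $\sum p_i^{-1}=1$, every one of these $2^n$ expectations is bounded by $\prod_{i=1}^n(\Ee\psi_i^{p_i}(X_i))^{1/p_i}$. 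Combining the two factors of $2^n$ produces exactly the constant $4^n$ in \eqref{eq:prod_est}, which in turn establishes b)~$\Rightarrow$~a).

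The only genuinely delicate step is the sub-additivity inequality, since it is the one place where a structural property of continuous negative definite functions (rather than just positivity and integrability) is invoked; everything else reduces to H\"older and elementary polynomial estimates. If for some reason one preferred to avoid that inequality, one could instead bound $\psi_i(X_{k_i,i}-X'_{l_i,i})$ by the quadratic growth estimate from the first step and reduce a) to the $2p$-moment condition directly, at the cost of a weaker statement; the sub-additivity route is preferable because it gives the clean reduction to b) with the sharp constant $4^n$.
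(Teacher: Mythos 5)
Your proof is correct and uses the same key ingredients as the paper's: the quadratic growth bound $\psi_i(x)\le C_i(1+|x|^2)$ for c)~$\Rightarrow$~b), and the subadditivity estimate $\psi_i(x-y)\le 2(\psi_i(x)+\psi_i(y))$ together with the generalized H\"older inequality for b)~$\Rightarrow$~a) and \eqref{eq:prod_est}. The only difference is a reordering: the paper applies H\"older first, reducing to $\prod_i(\Ee\psi_i^{p_i}(X_{k_i,i}-X'_{l_i,i}))^{1/p_i}$, and then handles the increment in each factor via subadditivity and Minkowski's inequality, whereas you apply subadditivity first, expand the resulting product of sums into $2^n$ terms, and then apply H\"older termwise; both routes yield the constant $4^n$.
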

\begin{proof}
    The implication from c) to b) follows from the fact that every continuous negative definite function is quadratically bounded, i.e.\ $|\psi(x)| \leq C(1 + x^2)$ for some $C > 0$, see \cite[Lem.~3.6.22]{Jaco2001}.

    The other implication follows directly from \eqref{eq:prod_est}. To show \eqref{eq:prod_est}, note that the generalized H\"older inequality for $n$-fold products
    (cf.~\cite[p.~133, Pr.~13.5]{schilling-mims})
    gives
    $$
        \Ee\left( \prod_{i=1}^n \psi_i(X_{k_i,i}-X'_{l_i,i})\right)
        \leq \prod_{i=1}^n \left(\Ee \psi_i^{p_i}(X_{k_i,i}-X'_{l_i,i})\right)^{1/p_i}.
    $$
    Using an inequality for continuous negative definite functions (cf.~\cite[Eq.~(2.5)]{part1}, see also \cite[Lem.~3.6.21]{Jaco2001})
    and the Minkowski inequality for the $L^{p_i}$-norm yields the bound
\begin{align*}
        \left(\Ee \psi_i^{p_i}(X_{k_i,i}-X'_{l_i,i})\right)^{1/p_i}
        &\leq 2 \left(\Ee \left[\psi_i(X_{k_i,i})+\psi_i(X'_{l_i,i})\right]^{p_i}\right)^{1/p_i}\\
        &\leq 4 \left(\Ee \psi_i^{p_i}(X_{i})\right)^{1/p_i}. \qedhere
\end{align*}
\end{proof}

\subsection{Proofs and auxiliary results for Section~\ref{sec:theory}}

\begin{proof}[Proof of Proposition~\ref{prop:representation}] Using \eqref{eq:X0X1}, we can rewrite $M_\rho$ in the following way:
\begin{equation}
\begin{aligned}\label{eq:Mrep}
    M_\rho^2 
    &= \int \left| \Ee\left[ \prod_{i=1}^n \left(\ee^{\ii \scalp{X_i}{t_i}}- f_{X_i}(t_i)\right)\right] \right|^2 \rho(\ddt)\\
    &= \int \left| \Ee\left[ \prod_{i=1}^n \left(\ee^{\ii \scalp{X_{1,i}}{t_i}}- \ee^{\ii \scalp{X_{0,i}}{t_i}}\right)\right]\right|^2 \rho(\ddt)\\
    &= \int \Ee\left[\prod_{i=1}^n \left(\ee^{\ii \scalp{X_{1,i}}{t_i}} - \ee^{\ii \scalp{X_{0,i}}{t_i}}\right)
    \left(\ee^{-\ii \scalp{X'_{1,i}}{t_i}}- \ee^{-\ii \scalp{X_{0,i}'}{t_i}}\right)\right]\rho(\ddt)\\
    &= \int \Ee\left[\sum_{k,l \in \{0,1\}^{ n}} (-1)^{\sum_{j=1}^n (k_j+l_j)}  \prod_{i=1}^n \ee^{\ii \scalp{(X_{k_i,i}-X'_{l_i,i})}{t_i}}\right] \rho(\ddt)
\end{aligned}
\end{equation}
and the ultimate line already gives \eqref{eq:Msum_1}. By \eqref{eq:Msymmetric},
$$
    M_\rho^2(X_1, X_2, \dots, X_n)
    = \frac{1}{2} \left(M_\rho^2(X_1, X_2, \dots, X_n) + M_\rho^2(-X_1, X_2, \dots, X_n)\right).
$$
Applying this to \eqref{eq:Mrep} shows that the imaginary part of the complex exponential cancels for $i=1$. Repeated applications to $i=2,\dots, n$ removes the other imaginary terms, and we obtain
\begin{equation}\label{eq:M_prod_interm}
    M_\rho^2 
    = \int \Ee\left[\sum_{k,l \in \{0,1\}^{ n}} \sgn(k,l) \prod_{i=1}^n \cos(\scalp{(X_{k_i,i}-X'_{l_i,i})}{t_i})\right] \rho(\ddt).
\end{equation}
It remains to show that \eqref{eq:M_prod_interm} is equal to  \eqref{eq:Msum_2}. For this, we note that the product appearing in \eqref{eq:Msum_2} is of the form
$$
    \prod_{i=1}^n \left[\cos(\scalp{(X_{k_i,i}-X'_{l_i,i})}{t_i}) - 1\right]
    = \prod_{i=1}^n \cos(\scalp{(X_{k_i,i}-X'_{l_i,i})}{t_i}) + \prod_{i=1}^n c(k_i,l_i)
$$
where $c(k_i,l_i)$ is either $\cos(\scalp{(X_{k_i,i}-X'_{l_i,i})}{t_i})$ or $-1$ and at least one factor in the second product is $-1$; if, say, $c(k_m,l_m)=-1$ for some $m\in\{1,\dots, n\}$, we get with $k'=(k_1,\dots,k_{m-1},k_{m+1},\dots,k_n)$, $l'=(l_1,\dots,l_{m-1},l_{m+1},\dots,l_n)$,
\begin{align*}
    &\sum_{k,l\in\{0,1\}^n} \sgn(k,l)\prod_{i=1}^nc(k_i,l_i)\\
    &\qquad=  -\sum_{\mathclap{k_m,l_m\in\{0,1\}}} \quad(-1)^{k_m+l_m} \quad\sum_{\mathclap{k',l'\in\{0,1\}^{n-1}}} \quad\sgn(k',l')\prod_{i\neq m} c(k_i,l_i).
\end{align*}
This expression is $0$ since the inner sum does not depend on $k_m,l_m$ and appears exactly four times, twice with positive and twice with negative sign.
This shows that \eqref{eq:Msum_2} is equal to \eqref{eq:M_prod_interm}.

Finally, by Lemma~\ref{lem:moments}, all moment conditions in Definition~\ref{def:moment} imply the mixed moment condition \ref{def:moment}.a), $\Ee\left( \prod_{i=1}^n \psi_i(X_{k_i,i}-X'_{l_i,i})\right)< \infty$ for all $k,l\in\{0,1\}^n$. Under this condition, Fubini's theorem together with the tower property for conditional expectations and the independence properties \eqref{eq:X0X1} of $\bm{X}_0, \bm{X'}_0$ yield
\begin{equation}\begin{aligned}
    M_\rho^2 
    &= \Ee\left(\sum_{k,l \in \{0,1\}^{ n}} (-1)^{\sum_{j=1}^n (k_j+l_j)} \prod_{i=1}^n (- \psi_i(X_{k_i,i}-X'_{l_i,i})) \right)\\
&=\label{eq:Mpsi}
 \Ee\left(\prod_{i=1}^n \Psi_{i,0,1}\right)= \Ee\left(\Ee\left(\prod_{i=1}^n \Psi_{i,0,1} \;\middle|\; \bm{X}_1, \bm{X}'_1\right)\right)= \Ee\left(\prod_{i=1}^n \doverline{\Psi}_{i} \right)
\end{aligned}\end{equation}
where
\begin{align*}
\Psi_{i,0,1} := &-\psi_i(X_{1,i}-X'_{1,i})+\psi_i(X_{1,i}-X'_{0,i})\\
&\quad\mbox{}+\psi_i(X_{0,i}-X'_{1,i})-\psi_i(X_{0,i}-X'_{0,i}),\\
\doverline{\Psi}_{i} := &-\psi_i(X_{i}-X'_{i})+\Ee(\psi_i(X_{i}-X'_{i})\mid X_i)\\
&\quad\mbox{}+\Ee(\psi_i(X_{i}-X'_{i})\mid X'_i)-\Ee\psi_i(X_{i}-X'_{i}).\qedhere
\end{align*}
\end{proof}

\subsection{Proofs and auxiliary results for Section~\ref{sec:stats}}

\begin{lemma}\label{lem:ZN}
Let $\bm{X}^{(l)} := (X_1^{(l)},\dots,X_n^{(l)})$ be independent and identical distributed copies of $\bm{X} = (X_1,\dots,X_n)$ and set
\begin{equation}\label{eq:ZN}
    Z_N(t)
    := \frac{1}{N} \sum_{l=1}^N \prod_{i=1}^n \left(\ee^{\ii \scalp{X_i^{(l)}}{t_i}} - \frac{1}{N} \sum_{k=1}^N \ee^{\ii \scalp{X_i^{(k)}}{t_i}}\right).
\end{equation}
Then
\begin{equation} \label{eq:GCNintrep}
    \MN_\rho(\bm{X}^{(1)},\dots,\bm{X}^{(N)})
    = \|Z_N(\sbullet)\|_{L^2(\rho)}.
\end{equation}
If $X_1,\dots,X_n$ are independent, then
\begin{align}
    \Ee Z_N(t) &= 0,\\
    \Ee\big(Z_N(t) \overline{Z_N(t')}\big) &= \frac{1}{N} \cdot C_N\cdot \prod_{i=1}^n \left[ f_{X_i}(t_i-t_i') -  f_{X_i}(t_i) \overline{f_{X_i}(t_i')}  \right],\\
\label{eq:squaredMN}
    \Ee\left(\left|\sqrt{N} Z_N(t) \right|^2\right) &= C_N \cdot \prod_{i=1}^n \left(1 - |f_{X_i}(t_i)|^2 \right),
\end{align}
with constant $C_N:= \frac{(N-1)^n + (-1)^n (N-1)}{N^{n}}.$
\end{lemma}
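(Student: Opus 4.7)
The representation~\eqref{eq:GCNintrep} is the most immediate part: the inner sum $\frac{1}{N}\sum_{k=1}^N \ee^{\ii\scalp{X_i^{(k)}}{t_i}}$ is exactly the empirical characteristic function $\hat f_{X_i}(t_i)$ of the $i$-th coordinate of the empirical distribution of $\bm{X}^{(1)},\dots,\bm{X}^{(N)}$, so $Z_N(t)$ is precisely $\Ee_{\hat{\mathbb P}}\left[\prod_{i=1}^n (\ee^{\ii\scalp{X_i}{t_i}}- f_{X_i}(t_i))\right]$. Plugging this into \eqref{def:multivariance} applied to the empirical distribution yields $\MN_\rho^2 = \|Z_N\|_{L^2(\rho)}^2$.

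For the mean and covariance formulas I would first exploit the algebraic identity
\begin{equation*}
    \ee^{\ii\scalp{X_i^{(l)}}{t_i}} - \frac{1}{N}\sum_{k=1}^N \ee^{\ii\scalp{X_i^{(k)}}{t_i}}
    = \frac{1}{N}\sum_{k \neq l}\bigl(\ee^{\ii\scalp{X_i^{(l)}}{t_i}} - \ee^{\ii\scalp{X_i^{(k)}}{t_i}}\bigr),
\end{equation*}
which rewrites $Z_N(t) = N^{-n-1}\sum_l \sum_{k_1\neq l,\dots,k_n\neq l}\prod_{i=1}^n\bigl(\ee^{\ii\scalp{X_i^{(l)}}{t_i}} - \ee^{\ii\scalp{X_i^{(k_i)}}{t_i}}\bigr)$. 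Since the random vectors $(X_i^{(l)})_i$ have independent components (joint independence of $X_1,\dots,X_n$) and since samples across $l$ are iid, the expectation factorises across $i$, and each factor $\Ee[\ee^{\ii\scalp{X_i^{(l)}}{t_i}} - \ee^{\ii\scalp{X_i^{(k_i)}}{t_i}}] = f_{X_i}(t_i) - f_{X_i}(t_i) = 0$, giving $\Ee Z_N(t)=0$.

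For the covariance, I would apply the same expansion to both $Z_N(t)$ and $\overline{Z_N(t')}$, obtaining
\begin{equation*}
    \Ee\bigl(Z_N(t)\overline{Z_N(t')}\bigr)
    = \frac{1}{N^{2n+2}} \sum_{l,l',\mathbf{k},\mathbf{k}'} \prod_{i=1}^n \Ee\Bigl[\bigl(U_i^{(l)}-U_i^{(k_i)}\bigr)\bigl(\overline{U_i'^{(l')}}-\overline{U_i'^{(k_i')}}\bigr)\Bigr],
\end{equation*}
where $U_i^{(l)}:=\ee^{\ii\scalp{X_i^{(l)}}{t_i}}$, $U_i'^{(l)}:=\ee^{\ii\scalp{X_i^{(l)}}{t_i'}}$, the sum runs over $l,l' \in \{1,\dots,N\}$, $k_i\neq l$, $k_i'\neq l'$, and the factorisation again uses the joint independence of $X_1,\dots,X_n$. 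A direct evaluation using $\Ee[U_i^{(a)}\overline{U_i'^{(b)}}] = f_i\overline{f_i'} + \One_{a=b}\,g_i$ (with $g_i := f_{X_i}(t_i-t_i')-f_{X_i}(t_i)\overline{f_{X_i}(t_i')}$) shows that each inner factor collapses to
\begin{equation*}
    \bigl(\One_{l=l'}-\One_{l=k_i'}-\One_{k_i=l'}+\One_{k_i=k_i'}\bigr)\,g_i.
\end{equation*}

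The remaining step, and the main combinatorial obstacle, is evaluating
\begin{equation*}
    S := \sum_{l,l'}\sum_{\substack{k_i\neq l\\k_i'\neq l'}}\prod_{i=1}^n \bigl(\One_{l=l'}-\One_{l=k_i'}-\One_{k_i=l'}+\One_{k_i=k_i'}\bigr).
\end{equation*}
Since the expression inside the product depends on $i$ only through $(k_i,k_i')$, the inner sum factorises as $T(l,l')^n$ where $T(l,l') := \sum_{k\neq l, k'\neq l'}(\One_{l=l'}-\One_{l=k'}-\One_{k=l'}+\One_{k=k'})$. Direct case analysis yields $T(l,l) = N(N-1)$ and $T(l,l')=-N$ for $l\neq l'$. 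Summing gives $S = N\cdot [N(N-1)]^n + N(N-1)\cdot(-N)^n = N^{n+1}\bigl[(N-1)^n+(-1)^n(N-1)\bigr]$. Dividing by $N^{2n+2}$ produces the factor $\frac{1}{N}C_N$ in front of $\prod_i g_i$, which is the claim. The final identity \eqref{eq:squaredMN} follows by setting $t=t'$, so that $g_i = 1-|f_{X_i}(t_i)|^2$, and multiplying by $N$.
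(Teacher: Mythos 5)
Your proposal is correct and follows essentially the same route as the paper: identify $Z_N$ as the empirical version of the integrand in \eqref{def:multivariance}, factorise $\Ee(Z_N(t)\overline{Z_N(t')})$ over $i$ using the joint independence of $X_1,\dots,X_n$, and then split the sum over sample indices into $l=l'$ and $l\neq l'$. The only difference is bookkeeping: the paper directly evaluates each factor of the double sum over $l,l'$ in terms of characteristic functions, whereas you first expand $Z_N$ via the algebraic identity into sums over $k_i\neq l$, reduce each factor to indicator functions times $g_i$, and package the case split as the elementary count $T(l,l')$ -- a somewhat cleaner combinatorial formulation of the same computation, arriving at the same constant $C_N$.
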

\begin{proof}
The equality \eqref{eq:GCNintrep} follows by inserting the empirical characteristic function into the representation \eqref{eq:M_rho_L2} of distance multivariance.

Assume that the random variables $X_1, \dots, X_n$ are independent. We obtain
$$
    \Ee\left(\ee^{\ii \scalp{X_i^{(l)}}{t_i}}-\frac{1}{N} \sum_{k=1}^N \ee^{\ii \scalp{X_i^{(k)}}{t_i}}\right)=0,\quad i=1,\dots, n,\; l = 1,\dots,N,
$$
hence, $\Ee Z_N(t) = 0$. Next, consider
\begin{equation} \label{ZN-exp} \begin{aligned}
    \Ee(Z_N(t) \overline{Z_N(t')})
    &= \frac{1}{N^2} \sum_{l,l' =1}^N \Ee\left[ \prod_{i=1}^n \left(\ee^{\ii \scalp{X_i^{(l)}}{t_i}}-\frac{1}{N} \sum_{k=1}^N \ee^{\ii \scalp{X_i^{(k)}}{t_i}}\right)\right.\times\\
    &\qquad\qquad\mbox{}\times\left.\prod_{i'=1}^n \left(\ee^{-\ii \scalp{X_{i'}^{(l')}}{t'_{i'}}}-\frac{1}{N} \sum_{k'=1}^N \ee^{-\ii \scalp{X_{i'}^{(k')}}{t'_{i'}}}\right)  \right].
\end{aligned}\end{equation}
The independence of $X_i$, $X_{j}$ for $i\neq j$ implies
\begin{align*}
    &\Ee\left[ \prod_{i=1}^n \left(\ee^{\ii \scalp{X_i^{(l)}}{t_i}} - \frac{1}{N} \sum_{k=1}^N \ee^{\ii \scalp{X_i^{(k)}}{t_i}}\right)
    \cdot \prod_{i'=1}^n \left(\ee^{-\ii \scalp{X_{i'}^{(l')}}{t'_{i'}}} - \frac{1}{N} \sum_{k'=1}^N \ee^{-\ii \scalp{X_{i'}^{(k')}}{t'_{i'}}}\right)\right]\\
    &\qquad= \prod_{i=1}^n  \Ee\left[ \left(\ee^{\ii \scalp{X_i^{(l)}}{t_i}} - \frac{1}{N} \sum_{k=1}^N \ee^{\ii \scalp{X_i^{(k)}}{t_i}}\right)
        \cdot \left(\ee^{-\ii \scalp{X_{i}^{(l')}}{t'_{i}}} - \frac{1}{N} \sum_{k'=1}^N \ee^{-\ii \scalp{X_{i}^{(k')}}{t'_{i}}}\right)  \right]
\end{align*}
and each factor simplifies to
\begin{align*}
\Ee&\left[ \left(\ee^{\ii \scalp{X_i^{(l)}}{t_i}} - \frac{1}{N} \sum_{k=1}^N \ee^{\ii \scalp{X_i^{(k)}}{t_i}}\right)
        \cdot \left(\ee^{-\ii \scalp{X_{i}^{(l')}}{t'_{i}}} - \frac{1}{N} \sum_{k'=1}^N \ee^{-\ii \scalp{X_{i}^{(k')}}{t'_{i}}}\right)  \right]\\
&= \Ee \ee^{\ii  \scalp{X_i^{(l)}}{t_i}-\ii  \scalp{X_i^{(l')}}{t_i'}} - 2 \frac{N-1}{N} f_{X_i}(t_i) \overline{f_{X_i}(t_i')} - \frac{2}{N} f_{X_i}(t_i-t_i') \\
&\qquad\mbox{}+ \frac{N^2-N}{N^2}  f_{X_i}(t_i) \overline{f_{X_i}(t_i')} + \frac{N}{N^2} f_{X_i}(t_i-t_i')\\
&= \Ee \ee^{\ii \scalp{X_i^{(l)}}{t_i}-\ii  \scalp{X_i^{(l')}}{t_i'}} - \frac{N-1}{N} f_{X_i}(t_i) \overline{f_{X_i}(t_i')} - \frac{1}{N} f_{X_i}(t_i-t_i').
\end{align*}
Thus, splitting the sum in \eqref{ZN-exp} into $l=l'$ and $l\neq l'$ yields
\begin{align*}
    &\Ee\big(Z_N(t) \overline{Z_N(t')}\big)\\
    &= \frac{1}{N^2} \sum_{l,l' =1}^N \prod_{i=1}^n \left[\Ee \ee^{\ii \scalp{X_i^{(l)}}{t_i} - \ii  \scalp{X_i^{(l')}}{t_i'}}
        - \frac{N-1}{N} f_{X_i}(t_i) \overline{f_{X_i}(t_i')} - \frac{1}{N} f_{X_i}(t_i-t_i')\right]\\
    &= \frac{N}{N^2} \prod_{i=1}^n \left[-\frac{N-1}{N}  f_{X_i}(t_i) \overline{f_{X_i}(t_i')} - \left(\frac{1}{N}-1\right) f_{X_i}(t_i-t_i') \right] \\
    &\qquad\mbox{} + \frac{N^2-N}{N^2} \prod_{i=1}^n \left[\left(-\frac{N-1}{N} +1\right) f_{X_i}(t_i) \overline{f_{X_i}(t_i')} - \frac{1}{N} f_{X_i}(t_i-t_i') \right]\\
    &= \left(\frac{1}{N} \left(\frac{N-1}{N}\right)^n + \frac{N-1}{N} \left(\frac{-1}{N}\right)^n \right) \prod_{i=1}^n \left[f_{X_i}(t_i-t_i') -  f_{X_i}(t_i) \overline{f_{X_i}(t_i')} \right]\\
    &= \frac{(N-1)^n + (-1)^n (N-1)}{N^{n+1}} \prod_{i=1}^n \left[ f_{X_i}(t_i-t_i') -  f_{X_i}(t_i) \overline{f_{X_i}(t_i')}  \right].
\end{align*}
For $t'=t$ this reduces to
\begin{equation*}
    \Ee\left(\left|\sqrt{N} Z_N(t) \right|^2\right) = N\cdot \frac{(N-1)^n + (-1)^n (N-1)}{N^{n+1}}\prod_{i=1}^n \left(1 - |f_{X_i}(t_i)|^2 \right).\qedhere
\end{equation*}
\end{proof}

\begin{proof}[Proof of Theorem~\ref{thm:Mdistconv}]
We start with part b), which is a simple consequence of the strong consistency of $\MN_\rho$. Indeed, by Theorem~\ref{thm:sconsistent} we have $\MN_\rho \to M_\rho$ a.s., and from Theorem~\ref{thm:independence} we know that $M_\rho > 0$ under the conditions of b), such that \eqref{eq:Mdistconv_b} follows.

For part a), let $Z_N(t)$ be defined as in \eqref{eq:ZN}. Then $\MN_\rho(\bm{X}^{(1)},\dots,\bm{X}^{(N)}) =  \|Z_N(.)\|_{L^2(\rho)}$ by Lemma~\ref{lem:ZN}.

If $\sqrt{N} Z_N$ converges in distribution to a Gaussian process then, by Lemma \ref{lem:ZN}, this process is centred and has the covariance structure \eqref{eq:G_cov}, i.e.\ it is distributed as $\mathds{G}$.
In order to show convergence, we introduce the following notation. Denote by $F_{\bm{X}}$ the distribution function of $\bm{X}$ and by $\FN_{\bm{X}}$ the empirical distribution function of the iid sequence $(\bm{X}^{(1)}, \dots, \bm{X}^{(N)})$. For a subset $S \subset \{1, \dots, n\}$ we write $t_S := (t_i)_{i \in S}$ and denote the corresponding empirical characteristic function by
\begin{equation*}
    \fN_S(t_S) := \frac{1}{N} \sum_{j=1}^N \exp\left(\ii \sum_{i \in S} \scalp{X_i^{(j)}}{t_i} \right) = \int\ee^{\ii \scalp{x_S}{t_S}}\,\dd(\FN_{\bm{X}}(x)).
\end{equation*}
If $S=\{i\}$ is a singleton, we write $\fN_i :=  \fN_{\{i\}}$.
By \cite[Thm~3.1, p.~208]{Csoe1981a} the $\log$-moment condition is sufficient for the convergence
\begin{equation}\label{eq:convBB}
    {\scriptstyle\sqrt{N}} \left(\fN(t) - f(t)\right)
    = \int\ee^{\ii \scalp{x}{t}}\,\dd\!\left( {\scriptstyle\sqrt{N}}(\FN_{\bm{X}}(x) -F_{\bm{X}}(x))\right)
    \xrightarrow[N\to \infty]{d} \int\ee^{\ii \scalp{x}{t}}\,\ddB(x),
\end{equation}
where $B$ is a Brownian bridge indexed by $\real^d$ (cf.~\cite[Eq.~(3.2)]{Csoe1981a}) and the distributional convergence is uniform (in $t$) on compact subsets of $\real^d$.
Next, we rewrite $Z_N$ from \eqref{eq:ZN} as
\begin{equation}\label{eq:ZN2}
    Z_N(t) = \sum_{S\subset \{1,\dots,n\}} (-1)^{n-|S|} \left(\fN_S(t_S) \cdot \prod_{\smash{j} \in S^c} \fN_j(t_j)\right).
\end{equation}
In addition, we have the simple identity, cf.~\eqref{eq:ab_identity},
\begin{equation}\label{eq:prod}
\prod_{j=1}^n \left(f_j(t_j) - \fN_j(t_j) \right) = \sum_{S\subset \{1,\dots,n\}} (-1)^{n-|S|} \left(\prod_{\smash{j} \in S} f_j(t_j) \cdot \prod_{\smash{j} \in S^c} \fN_j(t_j)\right).
\end{equation}
Subtracting \eqref{eq:prod} from \eqref{eq:ZN2} and rearranging the resulting equation yields
\begin{align*}
\sqrt{N} Z_N(t)
&= \sum_{S\subset \{1,\dots,n\}} (-1)^{n-|S|} \sqrt{N} \big(\fN_S(t_S) - f_S(t_S)\big) \cdot \prod_{\smash{j} \in S^c} \fN_j(t_j) \\
&\qquad\mbox{}+ \sqrt{N} \prod_{j=1}^n \big(f_j(t_j)- \fN_j(t_j)\big).
\end{align*}
By \eqref{eq:convBB}, we have that
$$
    \sqrt{N} \left(\fN_S(t_S) - f_S(t_S)\right) \xrightarrow[N\to \infty]{d} \int\ee^{\ii \scalp{x_S}{t_S}}\,\ddB(x).
$$
By the Glivenko--Cantelli theorem  the limit $\lim_{N\to\infty}\fN_j(t_j) = f_j(t_j)$ exists  uniformly in $t_j$ for all $j = 1, \dots, n$, and thus
\begin{align*}
     &{\sqrt{N}}\prod_{j=1}^n \left(\fN_j(t_j)- f_j(t_j)\right)\\
     &
    =  {\sqrt{N}}\left( \fN_1(t_1) - f_1(t_1)\right) \cdot \prod_{j=2}^n \left(\fN_j(t_j)- f_j(t_j)\right)
    \xrightarrow[N\to\infty]{} 0.
\end{align*}
Together with \eqref{eq:ZN2} this yields the convergence
\begin{equation}\label{eq:conv_to_G}
\sqrt{N} Z_N(t)  \xrightarrow[N\to \infty]{d} \sum_{S\subset \{1,\dots,n\}} (-1)^{n-|S|} \int\ee^{\ii \scalp{x_S}{t_S}}\,\ddB(x) \cdot \prod_{\smash{j} \in S^c} f_j(t_j),
\end{equation}
which takes place uniformly on compacts. The right hand side is a complex-valued Gaussian process indexed by $\real^d$; denoting this process by $\mathds{G}$, we have thus shown that for each $T > 0$,
\begin{equation}\label{eq:sum-conv}
    \sqrt{N} Z_N \xrightarrow[N\to\infty]{d} \mathds{G}
    \quad\text{on}\quad
    \Cskript_T:= (C(B^d_T), \|\sbullet\|_{B^d_T}),
\end{equation}
where $B^d_T := B^d_T(0) := \{x\in \real^d\,:\,|x| <  T\}$ and $\|f\|_{B^d_T} := \sup_{x \in B^d_T }|f(x)|$. To obtain \eqref{eq:G_conv}, it remains to show that also the $L^2(\rho)$-norms of both sides of \eqref{eq:sum-conv} converge, and that $T$ can be sent to infinity.
To this end, we apply a truncation argument.

Set
\begin{equation}
\rho_{i,\epsilon}(A) := \rho_{i}(A\cap (B_{1/\epsilon }^{d_i}\setminus B_{\epsilon}^{d_i})) \quad \text{ and } \quad  \rho_i^{\epsilon} := \rho_i-\rho_{i,\epsilon},
\end{equation}
and note that the $\rho_{i,\epsilon}$ are finite measures for  each  $\epsilon >0$, by \eqref{eq:levy_integrability}. In addition, we define $\rho_{\epsilon} = \bigotimes_{i=1}^n\rho_{i,\epsilon} $ as well as $ \rho^{\epsilon} =  \bigotimes_{i=1}^n\rho_i^{\epsilon}$ and introduce, for this proof, the shorthand notation $\norm{\sbullet}_{\rho_\epsilon} = \norm{\sbullet}_{L^2(\rho_\epsilon)}$. Note that $|x_i| \le 1/\epsilon $, $x_i\in\real^{d_i}$, for all $i=1,\dots, n$ implies $|x| \le \sqrt{n}/\epsilon$, $x=(x_1,\dots,x_n)\in\real^d$, and hence we have
\begin{equation}
    \left|\|h\|_{\rho_{\epsilon}} - \|h'\|_{\rho_{\epsilon}} \right|^2
    \leq \left\|h-h'\right\|_{\rho_\epsilon}
    \leq \sup_{|x|\,\leq \sqrt n/\epsilon} \left|h(x)-h'(x)\right|^2 \cdot \prod_{i=1}^n \rho_{i,\epsilon}(\real^{d_i}),
\end{equation}
which shows that $\|\sbullet\|^2_{\rho_{\epsilon}}$ is continuous on $\Cskript_T$ for any $T \geq \sqrt{n}/\epsilon$.  Thus, the continuous mapping theorem implies that for any $\epsilon > 0$
\begin{equation}\label{eq:G_cmt}
    \|\sqrt{N} Z_N\|^2_{\rho_{\epsilon}} \xrightarrow[N\to\infty]{d} \|\mathds{G}\|^2_{\rho_{\epsilon}}.
\end{equation}
By the portmanteau theorem, the convergence \eqref{eq:G_conv} is equivalent to the statement
$$
    \lim_{N \to \infty} \Ee\left(h(N\cdot \MN^2) - h(\norm{\mathds{G}}^2_\rho)\right) = 0
$$
for all bounded Lipschitz continuous functions $h: \real \to \real$. Denoting the Lipschitz constant of $h$ by $L_h$, we see
\begin{align}\notag
    \left|\Ee h(N\cdot \MN^2) - \Ee h(\norm{\mathds{G}}^2_\rho) \right|
    &\leq L_h \Ee \left| N\cdot \MN^2 - \|\sqrt{N} Z_N\|^2_{\rho_\epsilon} \right|\\
    &\label{eq:G_split}\qquad\mbox{}+ \left|\Ee h\left(\|\sqrt{N} Z_N\|^2_{\rho_\epsilon}\right) - \Ee h\left(\norm{\mathds{G}}^2_{\rho_\epsilon}\right)\right|\\
    &\notag\qquad\mbox{}+ L_h \Ee \left|\norm{\mathds{G}}^2_{\rho_\epsilon} -\norm{\mathds{G}}^2_{\rho}\right|.
\end{align}
The middle term tends to zero as $N \to \infty$, by \eqref{eq:G_cmt}. To estimate the other terms, define $\mu^\epsilon$ to be the measure given by
\begin{equation*}
    \left(\rho_1^{\epsilon} \otimes \rho_2 \otimes {\dots} \otimes \rho_n \right)+
     \left(\rho_1 \otimes \rho_2^{\epsilon} \otimes {\dots} \otimes \rho_n \right)  +
    \dots + \left( \rho_1 \otimes {\dots} \otimes \rho_{n-1} \otimes \rho_n^{\epsilon} \right).
\end{equation*}
For the first term on the right hand side of \eqref{eq:G_split} we get the bound
\begin{equation*}
    \Ee \left|N \cdot \MN^2 - \|\sqrt{N} Z_N\|^2_{\rho_{\epsilon}}\right|
    = \Ee \left|\|\sqrt{N} Z^N\|_{\rho}^2 - \|\sqrt{N} Z_N\|^2_{\rho_{\epsilon}}\right|
    \leq \Ee \|\sqrt{N} Z_N\|^2_{\mu^\epsilon}.
\end{equation*}
Using \eqref{eq:squaredMN} we see with $C_N := [(N-1)^n+(-1)^n(N-1)]/N^n \leq 1$
\begin{equation}
\label{eq:Zn-on-bounded}
    \left\|\Ee \left(|\sqrt{N} Z_N|^2\right)\right\|^2_{ \mu^\epsilon}
    = C_N \sum_{k=1}^n \bigg[\|1-|f_{X_k}|^2\|^2_{ \rho_k^{\epsilon}} \prod_{\substack{i=1 \\ i\neq k}}^n \|1-|f_{X_i}|^2\|^2_{\rho_i}\bigg],
\end{equation}
and this expression converges to $0$ as $\epsilon\to 0$. This follows from dominated convergence, since
\begin{equation*}
    \sum_{k=1}^n \bigg[\|1-|f_{X_k}|^2\|^2_{ \rho_k^{\epsilon}} \prod_{\substack{i=1 \\ i\neq k}}^n \|1-|f_{X_i}|^2\|^2_{\rho_i}\bigg]
    \leq n \prod_{i=1}^n \Ee\psi_i(X_i-X_i') < \infty.
\end{equation*}
The last term in \eqref{eq:G_split} can be estimated in a similar way. We have
\begin{equation} \label{eq:gauss-on-bounded}
    \|\mathds{G}\|^2_{\rho} - \|\mathds{G}\|^2_{\rho_{\epsilon}}
    \leq\|\mathds{G}\|^2_{\mu^\epsilon}
    \xrightarrow[\epsilon \to 0]{} 0
    \quad\text{a.s.}
\end{equation}
by dominated convergence, since $\lim_{\epsilon\to 0}\int g_i \, d\rho_i^{\epsilon} = 0$ for integrable $g_i$ and
\begin{equation}\label{eq:EG-2}\begin{aligned}
    \Ee(\|\mathds{G}\|^2_{\mu^\epsilon})
    \leq n \Ee(\|\mathds{G}\|^2_{\rho})
    &= n \prod_{i=1}^n \|1-|f_{X_i}|^2\|^2_{\rho_i}\\
    &= n \prod_{i=1}^n \Ee\psi_i(X_i-X_i') < \infty.
\end{aligned}\end{equation}
Together with \eqref{eq:G_split} this shows the convergence result \eqref{eq:G_conv} and completes the proof. \end{proof}


\begin{thebibliography}{BKRS18b}

\bibitem[Bar18]{Bart2018}
Jenny Bartholom\"aus.
\newblock Private communication, 2018.
\newblock TU Dresden.

\bibitem[BB18]{BersBoet2018}
Georg Berschneider and Bj\"orn B\"ottcher.
\newblock On complex {Gaussian} random fields, {Gaussian} quadratic forms and
  sample distance multivariance.
\newblock arXiv: 1808.07280v1, 2018.

\bibitem[BF75]{BerFor}
Christian Berg and Gunnar Forst.
\newblock {\em {P}otential {T}heory on {L}ocally {C}ompact {A}belian {G}roups}.
\newblock Springer, Berlin, 1975.

\bibitem[BKRS18a]{part1}
Bj{\"o}rn B{\"o}ttcher, Martin Keller-Ressel, and Ren{\'e}~L. Schilling.
\newblock Detecting independence of random vectors: {G}eneralized distance
  covariance and {G}aussian covariance.
\newblock {\em Modern Stochastics: Theory and Applications}, 5(3):353--383,
  2018.

\bibitem[BKRS18b]{part2}
Bj{\"o}rn B{\"o}ttcher, Martin Keller-Ressel, and Ren{\'e}~L. Schilling.
\newblock {D}istance multivariance: {N}ew dependence measures for random
  vectors.
\newblock 2018.

\bibitem[BKRS18c]{part2supp}
Bj{\"o}rn B{\"o}ttcher, Martin Keller-Ressel, and Ren{\'e}~L. Schilling.
\newblock Supplement to ``{D}istance multivariance: {N}ew dependence measures
  for random vectors''.
\newblock 2018.

\bibitem[B{\"o}t17a]{Boet2017}
Bj{\"o}rn B{\"o}ttcher.
\newblock Dependence structures - estimation and visualization using distance
  multivariance.
\newblock arXiv: 1712.06532v1, 2017.

\bibitem[B{\"o}t17b]{Boett2017R-1.0.5}
Bj{\"o}rn B{\"o}ttcher.
\newblock {\em multivariance:{ Measuring Multivariate Dependence Using Distance
  Multivariance}}, 2017.
\newblock R package version 1.0.5.

\bibitem[BS11]{BakiSzek2011}
Nail~K. Bakirov and G{\'a}bor~J. Sz{\'e}kely.
\newblock Brownian covariance and central limit theorem for stationary
  sequences.
\newblock {\em Theory of Probability \& Its Applications}, 55(3):371--394,
  2011.

\bibitem[BS17]{BerrSamw2017}
Thomas~B Berrett and Richard~J Samworth.
\newblock Nonparametric independence testing via mutual information.
\newblock arXiv: 1711.06642v1, 2017.

\bibitem[Com94]{Como1994}
Pierre Comon.
\newblock Independent component analysis, a new concept?
\newblock {\em Signal Processing}, 36(3):287 -- 314, 1994.
\newblock Higher Order Statistics.

\bibitem[Cop09]{Cope2009}
Leslie Cope.
\newblock {D}iscussion of: {B}rownian distance covariance.
\newblock {\em Annals of Applied Statistics}, 3(4):1279--1281, 12 2009.

\bibitem[Cs{\"o}81]{Csoe1981a}
S{\'a}ndor Cs{\"o}rg{\H{o}}.
\newblock Multivariate empirical characteristic functions.
\newblock {\em Zeitschrift f{\"u}r Wahrscheinlichkeitstheorie und Verwandte
  Gebiete}, 55(2):203--229, 1981.

\bibitem[Cs{\"o}85]{Csoe1985}
S{\'a}ndor Cs{\"o}rg{\H{o}}.
\newblock {T}esting for independence by the empirical characteristic function.
\newblock {\em Journal of Multivariate Analysis}, 16(3):290--299, 1985.

\bibitem[CT97]{ChowTeic1997}
Yuan~Shih Chow and Henry Teicher.
\newblock {\em Probability theory: independence, interchangeability,
  martingales}.
\newblock Springer Science \& Business Media, New York, 3rd edition, 1997.

\bibitem[Fel71]{feller1971introduction}
William Feller.
\newblock {\em An introduction to probability theory and its applications,
  volume I}.
\newblock {J}ohn {W}iley \& {S}ons, 3rd edition, 1971.

\bibitem[GBSS05]{GretBousSmolScho2005}
Arthur Gretton, Olivier Bousquet, Alex Smola, and Bernhard Sch{\"o}lkopf.
\newblock Measuring statistical dependence with hilbert-schmidt norms.
\newblock In Sanjay Jain, Hans~Ulrich Simon, and Etsuji Tomita, editors, {\em
  Algorithmic Learning Theory}, pages 63--77, Berlin, Heidelberg, 2005.
  Springer Berlin Heidelberg.

\bibitem[Jac01]{Jaco2001}
Niels Jacob.
\newblock {\em {P}seudo-{D}ifferential {O}perators and {M}arkov {P}rocesses
  {I}. {F}ourier {A}nalysis and {S}emigroups}.
\newblock Imperial College Press, London, 2001.

\bibitem[JM17]{JinMatt2017}
Ze~Jin and David~S. Matteson.
\newblock Generalizing distance covariance to measure and test multivariate
  mutual dependence.
\newblock arXiv: 1709.02532, 2017.

\bibitem[PBSP17]{PfisBuehSchoPete2017}
Niklas Pfister, Peter B{\"u}hlmann, Bernhard Sch{\"o}lkopf, and Jonas Peters.
\newblock Kernel-based tests for joint independence.
\newblock {\em Journal of the Royal Statistical Society: Series B (Statistical
  Methodology)}, 2017.

\bibitem[Sat99]{sato1999levy}
Ken-Iti Sato.
\newblock {\em {L}{\'e}vy processes and infinitely divisible distributions}.
\newblock Cambridge University Press, 1999.

\bibitem[SB03]{SzekBaki2003}
G{\'a}bor~J. Sz{\'e}kely and Nail~K. Bakirov.
\newblock {E}xtremal probabilities for {G}aussian quadratic forms.
\newblock {\em Probability theory and related fields}, 126(2):184--202, 2003.

\bibitem[Sch17]{schilling-mims}
Ren{\'e}.~L. Schilling.
\newblock {\em {M}easures, {I}ntegrals and {M}artingales \textup{(}second
  edition\textup{)}}.
\newblock Cambridge University Press, 2017.

\bibitem[SR09a]{SzekRizz2009}
G{\'a}bor~J. Sz{\'e}kely and Maria~L. Rizzo.
\newblock {B}rownian distance covariance.
\newblock {\em Annals of Applied Statistics}, 3(4):1236--1265, 2009.

\bibitem[SR09b]{SzekRizz2009a}
G{\'a}bor~J. Sz{\'e}kely and Maria~L. Rizzo.
\newblock {R}ejoinder: {B}rownian distance covariance.
\newblock {\em Annals of Applied Statistics}, 3(4):1303--1308, 12 2009.

\bibitem[SRB07]{SzekRizzBaki2007}
G{\'a}bor~J. Sz{\'e}kely, Maria~L. Rizzo, and Nail~K. Bakirov.
\newblock {M}easuring and testing dependence by correlation of distances.
\newblock {\em The Annals of Statistics}, 35(6):2769--2794, 2007.

\bibitem[SSGF13]{SejdSripGretFuku2013}
Dino Sejdinovic, Bharath Sriperumbudur, Arthur Gretton, and Kenji Fukumizu.
\newblock Equivalence of distance-based and {RKHS}-based statistics in
  hypothesis testing.
\newblock {\em Ann. Statist.}, 41(5):2263--2291, 10 2013.

\bibitem[YZS17]{YaoZhanShao2017}
Shun Yao, Xianyang Zhang, and Xiaofeng Shao.
\newblock Testing mutual independence in high dimension via distance
  covariance.
\newblock {\em Journal of the Royal Statistical Society. Series B. Statistical
  Methodology}, 2017.

\end{thebibliography}

\vfill
\noindent B.~B\"{o}ttcher, M.~Keller-Ressel, R.\,L.~Schilling\\
bjoern.boettcher@tu-dresden.de\\
martin.keller-ressel@tu-dresden.de\\
rene.schilling@tu-dresden.de\\ \\
TU Dresden \\ Fakult\"at Mathematik\\ Institut f\"{u}r Mathematische Stochastik\\ 01062 Dresden, Germany

\end{document}